\documentclass[11pt, reqno]{amsart}
\usepackage{amsfonts}
\usepackage{amssymb}
\usepackage{latexsym}
\usepackage{cite}
\usepackage{bbm}
\usepackage{graphicx}
\usepackage{appendix}
\makeatletter
\renewcommand\section{\@startsection
  {section}{1}{0pt}
  {-3.5ex plus -1ex minus -.2ex}
  {2.3ex plus .2ex}
  {\normalfont\Large\bfseries\raggedright}}
\makeatother

\usepackage[]{enumerate}
\usepackage{verbatim}
\usepackage[]{enumerate}

\usepackage[final]{hyperref}
\hypersetup{colorlinks=true, linkcolor=blue, anchorcolor=blue, citecolor=red, filecolor=blue, menucolor=blue, pagecolor=blue, urlcolor=blue}

\usepackage{bbm}

\renewcommand{\d}{\, \mathrm{d}}

\newcommand{\bigabs}[1]{\bigl\vert #1 \bigr\vert}
\newcommand{\Bigabs}[1]{\Bigl\vert #1 \Bigr\vert}

\newcommand{\norm}[1]{\left\Vert #1 \right\Vert}

\newcommand{\N}{\mathbb{N}}
\newcommand{\Z}{\mathbb{Z}}
\newcommand{\R}{\mathbb{R}}

\newcommand{\angles}[1]{\langle #1 \rangle}

\DeclareMathOperator{\supp}{supp}

\newtheorem{lemma}{Lemma}

\newtheorem{thm}{Theorem}[section]

\newtheorem{lem}[thm]{Lemma}

\theoremstyle{definition}
\newtheorem{definition}{Definition}

\theoremstyle{remark}
\newtheorem{remark}{Remark}

\title{Enhanced lifespan of solutions to Whitham-Boussinesq system with surface tension.
}

\author[M. Kamet] {Madina Kamet}

  \author[A. Tesfahun]{Achenef Tesfahun}

\address{School of Computing and AI \\
Nazarbayev University \\
Qabanbai Batyr Avenue 53 \\
010000 Nur-Sultan \\
Republic of Kazakhstan}

\email{madina.kamet@nu.edu.kz}

\email{achenef@gmail.com}

\keywords{ Whitham-Boussinesq systems,  Surface tension, long-time well-posedness.}
\subjclass[2010]{5Q53, 35Q35, 76B15, 35A01, 76B03}

\begin{document}

\begin{abstract}

We consider
a Whitham--Boussinesq type system with surface tension
arising as asymptotic models in the bi-directional propagation of weakly nonlinear surface
waves in shallow water,  which is characterized by a
nonlinearity parameter $0<\epsilon\le 1$,
 a shallow water parameter $0<\mu\le 1$ and a nonnegative surface tension parameter $\beta$.
 In this paper, we establish well-posedness of the associated Cauchy problem for $\beta>1/3$ on a time scale of order $ ( \beta_\ast^{ 1/4} \mu^{ 1/4} h_0 \epsilon^{-1} )^{4/3}$ in one dimension and of order $(   \beta_\ast^{ 1/4} \mu^{ 1/4} h_0 \epsilon^{-1})^{2-}$ in two dimensions,  where $\beta_\ast= \min(|3\beta-1|, 1/2)$, assuming a non-cavitation condition with a parameter $h_0>0$. 
 In addition, we established  
  well-posedness for the classical Whitham equation with either $\beta=0$ or $\beta>1/3$ on the timescale of order $ ( \beta_\ast^{ 1/4} \mu^{ 1/4}  \epsilon^{-1} )^{4/3}$. These results explicitly capture how the existence time depends on all the parameters involved.
 Our proofs rely on dispersive and Strichartz estimates combined with energy estimates.
\end{abstract}

\maketitle
\section{Introduction}
We consider the fully dispersive Whitham--Boussinesq type system with surface tension
\begin{align}\label{wtbsq} 
    \left\lbrace
    \begin{array}{l}
            \eta_t   +  \mathcal K_{\mu, \beta}(D) \nabla \cdot \mathbf{v} + \epsilon \nabla \cdot (\eta \mathbf{v})=0\\
        \mathbf{v}_t +  \nabla \eta + \frac{\epsilon}{2} \nabla (|\mathbf{v}|^2) =\mathbf{0},
 \end{array}\right.
\end{align}
where $\eta: \R^d \times \R \rightarrow \R$ and $\mathbf{v} : \R^d \times \R \rightarrow \R^d$ for $d\in \{1, 2\}$.
Here, the unknown variable $\eta$ describes the surface elevation and the unknown variable\footnote{
In two-dimensions $\mathbf v =(v_1,  v_2)$ is a vector field, whereas in one dimension  $\mathbf v =v$ is a scalar field.
} $\mathbf{v}$ is related to the fluid velocity, whereas
   $\mathcal K_{\mu, \beta}(D)$ is a nonlocal Fourier multiplier operators with symbol
\begin{equation}
\label{KL-def}
\mathcal K_{\mu, \beta} (\xi) = \mathcal T_\mu(\xi)  \left(1+   \beta \mu|\xi|^2  \right)  \qquad (\xi \in \R^d),
\end{equation}
where $$ \mathcal T_{\mu} (\xi) = \frac{\tanh( \sqrt \mu | \xi|)}{ \sqrt \mu | \xi |} .$$
   Moreover,  $\mu$ and 
$\varepsilon$ are small parameters that characterize the strength of dispersion and nonlinearity, respectively,  while $\beta$ is a nonnegative parameter associated with surface tension.

We complement \eqref{wtbsq} with  initial data 
\begin{equation}
\label{wtbsq-data}
\eta(x, 0)= \eta_0(x),  \qquad  \mathbf v(x, 0)= \mathbf v_0(x) .
\end{equation}

The system \eqref{wtbsq} was proposed in \cite{AMP13, KLP18, L13  }  as approximate models for the study of surface water waves, and provide a two-directional alternative to the well known Whitham equation which in one dimension takes the form
\begin{equation}
\label{wt}
 \eta_t  +  \mathcal K_{\mu, \beta}(D) \eta_x + \epsilon \eta \eta_x  =0\end{equation}
 with  initial data 
\begin{equation}
\label{wt-data}
\eta(x, 0)= \eta_0(x).
\end{equation}
This equation was introduced (for $\beta=0$) by Whitham in \cite{W67} as an alternative to the Korteweg-de Vries (KdV) equation by keeping the exact dispersion of the linearized water waves system in finite depth.
In the case of pure gravity waves
$\beta=0$, it displays several interesting phenomena predicted by Whitham: a solitary
wave regime close to KdV \cite{EGW12}, the existence of a wave of greatest height (Stokes
wave) \cite{EW19}, the existence of shocks \cite{H17}, and modulational instability of steady periodic waves \cite{HJ15, SKCK14}. Note that when surface tension is taken into account $\beta>0$,
the dynamics of \eqref{wt} appears to be completely different (see \cite{KLP18} and the references
therein).
Observe that for low frequencies  $\sqrt \mu |\xi|\ll 1 $,  one has by Taylor expansion,
\begin{equation}\label{K-exp}
    \mathcal K _{\mu, \beta} (\xi)\simeq 1 - \mu  \left( \frac 13-\beta \right)|\xi|^2,
\end{equation}
and so in the formal limit as  $\sqrt \mu |\xi|\rightarrow 0$ (long-wave regime), 
 \eqref{wt} reduces to the KdV equation
 \begin{equation}
\label{Wt-asyp}
 \eta_t  + \eta_x+  \mu  \left( \frac 13-\beta \right)  \eta_{xxx} + \epsilon \eta \eta_x  =0.
\end{equation}

The system \eqref{wtbsq} was 
introduced in \cite{AMP13, L13, MKD2029 } without surface tension ($\beta=0$) and in \cite{KLP18} with surface tension ($\beta>0$). 
In the long-wave regime, it reduces to
  the Boussinesq system
\begin{equation}\label{wtbq-approx}
\left\{
\begin{aligned}
\eta_t +   \nabla \cdot  \mathbf v+ \mu  \left( \frac 13-\beta \right) \Delta  \nabla \cdot  \mathbf v+\epsilon \nabla \cdot (\eta \mathbf v)  & = 0
  \\
\mathbf v_t + \nabla \eta   + \frac{\epsilon}{2} \nabla ( |\mathbf v |^2) &=   0.
\end{aligned}
\right.
\end{equation} 
The later system is a particular member of the (abcd) family of Boussinesq systems derived in \cite{BCS2002} as asymptotic models for water waves in the Boussinesq regime. 
It has been proved in  \cite{SWX2017} that the Cauchy problem \eqref{wtbq-approx}, \eqref{wtbsq-data}
is well-posed when $\beta >1/3$ for initial data $(\eta_0, \mathbf v_0)\in H^s(\mathbb{R}^d)\times H^{s+\frac{1}{2}}(\mathbb{R}^d)$ with $s>d/2$,  and the existence time is shown to be of order $\mathcal O (1/ \sqrt \varepsilon)$ in the long-wave regime $\epsilon=\mu$, while for $\beta <1/3$ the system is known to be ill-posed (see \cite{ABM2019}).

Going back to the full-dispersion systems \eqref{wtbsq}, when surface tension is taken
into account,  the Cauchy problem \eqref{wtbsq}--\eqref{wtbsq-data} with $\epsilon=\mu=1$ was proved to be locally well-posed by Kalisch and Pilod \cite{KP2019} for initial data
$(\eta_0, \mathbf v_0)\in H^s(\mathbb{R}^d)\times H^{s+\frac{1}{2}}(\mathbb{R}^d)$ with  $s>2+\frac{d}{2}$ and $d=1, 2$. Recently, by taking into account the small parameters $\varepsilon$ and $\mu$, Paulsen \cite{P2022} proved long-time well-posedness of \eqref{wtbsq}--\eqref{wtbsq-data} in one dimension with time of existence  $T\sim 1/\epsilon$ for initial data $(\eta_0, v_0)\in X_\mu^s(\mathbb{R})$, where the space $X_{\mu}^s (\R)$ is defined via the norm
\begin{equation*}
\| (f, g)\|^2_{X_{\mu}^s} := \norm{f}_{H^s}^2+\norm{g}_{H^s}^2+ \sqrt \mu  \norm{ |D|^\frac12   g}_{H^s}^2.
\end{equation*}
More recently,  well-posedness of the Whitham equation \eqref{wt}-\eqref{wt-data} and the Whitham-Boussinesq system
\begin{align}\label{wtb} 
    \left\lbrace
    \begin{array}{l}
            \eta_t   + \nabla \cdot \mathbf{v} + \epsilon \nabla \cdot (\eta \mathbf{v})=0,\\
        \mathbf{v}_t +   \mathcal T_{\mu}(D) \nabla \eta + \frac{\epsilon}{2} \nabla (|\mathbf{v}|^2) =\mathbf{0}
 \end{array}\right.
\end{align}
for initial data \eqref{wtbsq-data} 
 were established  by P. Pilod,  S. Selberg, N. Taki and the second author \cite{PSTT25} with time of existence that are dependent on the parameters $\mu$ and $\epsilon$.
More precisely, in \cite{PSTT25} well-posedness of \eqref{wt}-\eqref{wt-data} without surface tension as well as the system \eqref{wtb} in one dimension is proved on a time scale of order $\mu^{(1/4) -}\epsilon^{(-5/4) +}$ and  of \eqref{wtb} in two dimensions  on the time scale of order $\mu^{(1/4)-}\epsilon^{(-3/2)+}$. The  proof is based on energy estimates,  refined Strichartz estimates and compactness argument. The system \eqref{wtb} first appeared in \cite{DDK19} as an approximate model for the study of surface water waves. Well-posedness of this system and its variants have been investigated by several authors, see e.g., \cite{DDT22, D2019, D2020, Dy19,  DST20, E21, T24}.

In this paper,  we establish 
  well-posedness for \eqref{wt}-\eqref{wt-data} with either $\beta=0$ or $\beta>1/3$ on the timescale of order $ ( \beta_\ast^{ 1/4} \mu^{ 1/4}  \epsilon^{-1} )^{4/3}$, 
  where $\beta_\ast= \min(|3\beta-1|, 1/2)$.
Furthermore,   we prove well-posedness for the system \eqref{wtbsq}-\eqref{wtbsq-data} with surface tension $\beta>1/3$ over a time scale of order $ ( \beta_\ast^{ 1/4} \mu^{ 1/4} h_0 \epsilon^{-1} )^{4/3}$ in one dimension and of order $(   \beta_\ast^{ 1/4} \mu^{ 1/4} h_0 \epsilon^{-1})^{2-}$ in two dimensions, assuming a non-cavitation condition with a parameter $h_0>0$. These results explicitly capture how the existence time depends on all the parameters involved in the model.
Our result for the one dimensional system 
recovers Paulsen's work \cite{P2022} in the long wave regime $\mu\sim \epsilon$ and $\beta_\ast\sim 1$,  whereas it improves it in the weakly nonlinear regime $\epsilon \ll \mu$.
Our two dimensional result appears to be new,  though in the case of $\epsilon=\mu=1$,  local well-posedness was obtained by  Kalisch and Pilod \cite{KP2019}.

Our proofs rely on dispersive and Strichartz estimates combined with compactness arguments.  In what follows,  we sketch the main argument for the Whitham equation \eqref{wt}.  A similar approach extends to the Whitham–Boussinesq system \eqref{wtbsq} (see Sections 4).
Upon differentiating the energy functional associated with \eqref{wt}, we arrive at
    \begin{align*}
       \frac{d}{d t} \norm{\eta (t)}^2_{H^s}
        &\le c \epsilon  \norm{ \partial_{x}  \eta(t) }_{L^\infty_x} \norm{\eta (t)}^2_{H^s},
    \end{align*}
   where  H\"{o}lder's inequality and Kato-Ponce commutator estimate is used to obtain the norms on the right.  Gr\"onwall's Lemma then yields
\begin{align}\label{AT1}
\|\eta\|_{L^{\infty}_TH^s_x} 
 &\le  \exp\left[  \epsilon     \norm{\partial_{x}  \eta }_{L^1_TL^\infty_x} \right]   \norm{ \eta_0}_{H^s} .
    \end{align}
    
In order to control $\norm{\partial_x\eta}_{L^1_TL^\infty_x}$ in \eqref{AT1},  first we write the Duhamel formulation of \eqref{wt},  and then estimate the homogeneous part of solution in the $L^4_TL^\infty_x$ Strichartz norm. The Duhamel's term is estimated using a $L^1_x$--$L^\infty_x$ time-decay property of the propagator; this approach was used by Elgindi and Widmayer  \cite{EW15} in the context of the SQG and inviscid Boussinesq system. These estimates will yield
\begin{align}\label{l1inf}
 \epsilon \norm{\partial_{x}  \eta }_{L^1_TL^\infty_x} \le c \left(  1+ \epsilon (\beta_\ast \mu)^{-\frac 14}T^{\frac 34 } \|\eta\|_{L^{\infty}_TH^s_x}  \right)^2
    \end{align}
which then can be combined with \eqref{AT1} to obtain
\begin{align}\label{AT2}
\|\eta\|_{L^{\infty}_TH^s_x}  \le \exp \left[c \left(  1+ \epsilon  (\beta_\ast \mu)^{-\frac 14}T^{\frac 34 } \|\eta\|_{L^{\infty}_TH^s_x}  \right)^2 \right]  \norm{ \eta_0}_{H^s}  .
    \end{align}
    \vspace{-7mm}

We may now apply a standard bootstrap argument to deduce an a priori bound $\|\eta\|_{L^{\infty}_TH^s_x} \le 4  \norm{ \eta_0}_{H^s}  $ over a time interval of length $$T=C  ( \beta_\ast^{ 1/4} \mu^{ 1/4}  \epsilon^{-1} )^{4/3},$$ where $C$ depends on the initial data norm $\norm{ \eta_0}_{H^s} $.  Together with a standard compactness argument implemented on a regularized
version of the equation, this bound yields the existence of a solution $ \eta \in C([0,T];H^s(\R))$ for  \eqref{wt}-\eqref{wt-data}. Furthermore, uniqueness of solution follows from an
energy estimate applied on the
difference of two solutions, and  continuity of the flow map is obtained via the Bona–Smith method \cite{BS1975}.

In contrast,  the authors in \cite{PSTT25} used a refined Strichartz estimate to obtain \eqref{AT2} 
with $ \mu^{(- 1/5) +}T^{( 4/5) + }$ instead of $ \mu^{ -1/4 }T^{3/4 }$  on the right hand side,  which in turn implies
$$
  \|\eta\|_{L^{\infty}_TH^s_x} \le \exp \left[c \left(  1+ \epsilon  \mu^{-\frac 15 +}T^{\frac 45 + } \|\eta\|_{L^{\infty}_TH^s_x}  \right)^2 \right]  \norm{ \eta_0}_{H^s} . 
$$
    \vspace{-7mm}
    
This estimate yields an a priori bound on $\|\eta\|_{L^{\infty}_TH^s_x} $ over the timescale $\mu^{(1/4) -}\epsilon^{(-5/4) +}$,  from which  well-posedness of \eqref{wt}-\eqref{wt-data} is deduced.

Throughout the paper, we set
$$ \beta_\ast:= \min\left\{|3\beta-1|, \  \frac 12 \right\}. $$

Our first result is as follows. 
\vspace{2mm}
\begin{thm}\label{thm-wt}
    Let $\epsilon,\mu \in (0,1]$, $\beta=0$ or  $\beta>1/3 $. Suppose $\eta_0\in H^s(\R) $ with $s>s_\beta$, where $s_\beta=11/4$
    if $\beta=0$ and $s_\beta=9/4$ if $\beta>1/3$.
Then there exists a time
\begin{equation}\label{wtime}
       T:=T(\epsilon,\mu, \beta)=   \left(c \cdot ( \beta_\ast\mu)^{ -\frac 14} \epsilon \norm{\eta_0}_{H^s} \right)^{ -\frac 43}   , 
    \end{equation}
such that
 \eqref{wt}-\eqref{wt-data} admits a unique solution 
    \begin{equation}\label{eta-solnclass}
            \eta \in C([0,T];H^s(\R)) \cap C^1([0, T];H^{s-1}(\R))
    \end{equation}
  satisfying 
    \begin{equation}\label{eta-solnbound}
    \sup_{0\le t \le  T } \norm{\eta(t)}_{H^s} \le 4  \norm{\eta_0}_{H^s} ,
    \end{equation}
  where $c=c(s) $ is some positive constant.
 Furthermore, the flow map depends continuously on the initial data.
        
\end{thm}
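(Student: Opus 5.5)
The plan follows the scheme sketched in the introduction: an a priori energy bound, a dispersive control of $\epsilon\norm{\partial_x\eta}_{L^1_TL^\infty_x}$, a bootstrap, and then compactness, uniqueness and Bona--Smith.

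\textbf{Energy estimate.} Apply $J^s=\langle D\rangle^s$ to \eqref{wt} and pair with $J^s\eta$. The dispersive term $\mathcal K_{\mu,\beta}(D)\partial_x$ is skew-adjoint, so it drops out. For the nonlinearity, integrate by parts in $\angles{\eta J^s\eta_x, J^s\eta}$ and use the Kato--Ponce commutator estimate $\norm{[J^s,\eta]\partial_x\eta}_{L^2}\lesssim \norm{\partial_x\eta}_{L^\infty}\norm{\eta}_{H^s}$. This gives $\frac{d}{dt}\norm{\eta}_{H^s}^2\le c\epsilon\norm{\partial_x\eta}_{L^\infty}\norm{\eta}_{H^s}^2$, and Gr\"onwall yields \eqref{AT1}. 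This step is done on smooth solutions of a regularized problem, for instance with data $\eta_0^N=P_{\le N}\eta_0$ and the classical local $H^\infty$ theory, so that all manipulations are justified.

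\textbf{Dispersive estimates.} Write the Duhamel formula $\eta(t)=S(t)\eta_0-\epsilon\int_0^tS(t-t')\partial_x(\eta^2/2)(t')\,dt'$ with $S(t)=e^{-t\mathcal K_{\mu,\beta}(D)\partial_x}$. The phase is $\phi(\xi)=\xi\mathcal K_{\mu,\beta}(\xi)$, and the key input is a stationary-phase bound on Littlewood--Paley pieces. For $\beta>1/3$, $\phi''$ vanishes only at $\xi=0$ and grows for large $\xi$. Near the origin, $\phi'''(0)\sim\mu(3\beta-1)$, which gives $\norm{S(t)P_N f}_{L^\infty}\lesssim (\beta_\ast\mu)^{-1/3}|t|^{-1/3}\cdots$ at low frequency and $|t|^{-1/2}$ decay at high frequency.

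The exponent $3/4$ in $T$ signals that the estimate actually used is a uniform $L^1$--$L^\infty$ decay of the form $(\beta_\ast\mu)^{-1/4}|t|^{-1/4}$ with a loss of derivatives. I will aim for
\[
\norm{S(t)P_N f}_{L^\infty}\lesssim (\beta_\ast\mu)^{-1/4}|t|^{-1/4}N^{a}\norm{f}_{L^1},
\]
obtained by interpolating the Airy-type and Schr\"odinger-type regimes. The derivative loss $a$ is what produces $s_\beta$. For $\beta=0$ the phase has an additional degeneracy at large frequency, since $\phi\sim|\xi|^{1/2}\mu^{-1/4}$; this costs more derivatives and explains $s_0=11/4$.

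From this decay estimate, a $TT^\ast$ argument gives the $L^4_TL^\infty_x$ Strichartz bound for the linear part. H\"older in time then gives $\epsilon\norm{\partial_xS(t)\eta_0}_{L^1_TL^\infty}\lesssim \epsilon T^{3/4}(\beta_\ast\mu)^{-1/4}\norm{\eta_0}_{H^s}$. For the Duhamel term, apply the decay estimate directly, following Elgindi--Widmayer, with $\norm{\partial_x^2(\eta^2)}_{L^1}\lesssim\norm{\eta}_{H^s}^2$. Integrating $|t-t'|^{-1/4}$ over $[0,T]$ gives a factor $T^{3/4}$, and the outer $L^1_T$ gives another factor $T$. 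Summing the dyadic pieces requires $s>s_\beta$, and the low frequencies are handled by Bernstein.

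Collecting terms should give \eqref{l1inf}, possibly after bounding some products by the square of $1+\epsilon(\beta_\ast\mu)^{-1/4}T^{3/4}\norm{\eta}_{L^\infty_TH^s}$. Checking that the powers of $T$ and $\epsilon$ match this form is the main obstacle. In particular, the Duhamel term naively carries $\epsilon^2T^{7/4}$, which must be reorganized into $\left(\epsilon T^{3/4}\right)^2$-type factors using $T\lesssim$ the target time. If that fails, I would instead apply the Strichartz estimate to the Duhamel term in $L^1_TH^{s-1}$ and accept the constant.

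\textbf{Bootstrap and conclusion.} Combining the energy and dispersive bounds gives \eqref{AT2}. With $X(T)=\norm{\eta}_{L^\infty_TH^s}$, which is continuous in $T$ and satisfies $X(0)=\norm{\eta_0}_{H^s}$, the choice \eqref{wtime} with $c$ large makes $\epsilon(\beta_\ast\mu)^{-1/4}T^{3/4}\cdot4\norm{\eta_0}_{H^s}\le\delta$, so $e^{c(1+\delta)^2}<4$ after normalizing constants. Continuity then yields \eqref{eta-solnbound} uniformly in $N$.

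For existence, the uniform bound, the equation and Aubin--Lions compactness give a limit solution; weak continuity plus the energy equality upgrade it to $C([0,T];H^s)$, and the equation gives the $C^1H^{s-1}$ regularity. Uniqueness follows from an $L^2$ energy estimate on the difference $w$ of two solutions: $\frac{d}{dt}\norm{w}_{L^2}^2\lesssim\epsilon(\norm{\partial_x\eta_1}_{L^\infty}+\norm{\partial_x\eta_2}_{L^\infty})\norm{w}_{L^2}^2$. Continuous dependence follows from the Bona--Smith argument with regularized data.
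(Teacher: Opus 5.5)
\paragraph{The Duhamel term.} The gap is at the step you yourself flag as the main obstacle, and neither of your remedies closes it. Set $X:=\epsilon(\beta_\ast\mu)^{-1/4}T^{3/4}\norm{\eta}_{L^\infty_TH^s}$.

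With a uniform decay $(\beta_\ast\mu)^{-1/4}|t|^{-1/4}$, the Duhamel contribution to $\epsilon\int_0^T\norm{\eta_x}_{L^\infty_x}dt$ has size $\epsilon^2(\beta_\ast\mu)^{-1/4}T^{7/4}\norm{\eta}^2_{L^\infty_TH^s}=X^2(\beta_\ast\mu)^{1/4}T^{1/4}$. Your inhomogeneous-Strichartz fallback produces the same quantity, $X\cdot\epsilon T\norm{\eta}_{L^\infty_TH^s}$.

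At the time \eqref{wtime} one has $X\le 4/c$. However, $(\beta_\ast\mu)^{1/4}T^{1/4}=c^{-1/3}\bigl(\beta_\ast\mu/(\epsilon\norm{\eta_0}_{H^s})\bigr)^{1/3}$, which is unbounded in the weakly nonlinear regime $\epsilon\ll\beta_\ast\mu$. So no $c=c(s)$ closes the bootstrap, and using that $T$ is below the target time cannot repair a mismatch of powers. This route only reaches $T\sim(\beta_\ast\mu)^{1/7}(\epsilon\norm{\eta_0}_{H^s})^{-8/7}$.

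Your reading of the exponent is also off. By $TT^\ast$, $|t|^{-1/4}$ decay gives only $L^8_tL^\infty_x$, not $L^4_tL^\infty_x$. The factor $T^{3/4}$ is the H\"older loss from $L^4_T$, and $L^4_tL^\infty_x$ requires $|t|^{-1/2}$ decay.

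\paragraph{The missing idea.} After Littlewood--Paley localization there is no Airy degeneracy to interpolate away. By the lower bound \eqref{m-2ndderv-est} and scaling, $|\phi''(\xi)|\gtrsim\beta_\ast\mu\lambda$ on $|\xi|\sim\lambda\lesssim\mu^{-1/2}$. Stationary phase then gives the localized estimate \eqref{disp-est-1d}: decay $|t|^{-1/2}$ with constant $(\beta_\ast\mu)^{-1/2}\lambda^{-1/2}\angles{\sqrt{\beta\mu}\lambda}^{-1/2}\angles{\sqrt\mu\lambda}^{5/4}$. This one estimate does both jobs.
\begin{enumerate}[(a)]
\item Via $TT^\ast$ it gives \eqref{str-est-1d}, hence the bound $X$ for the free part.
\item Applied directly inside the Duhamel integral, it bounds that contribution by
\[
\epsilon^2(\beta_\ast\mu)^{-1/2}\int_0^T\!\!\int_0^t(t-\tau)^{-1/2}\,d\tau\,dt\,\norm{\eta}^2_{L^\infty_TH^s}\lesssim\epsilon^2(\beta_\ast\mu)^{-1/2}T^{3/2}\norm{\eta}^2_{L^\infty_TH^s}=X^2 .
\]
Here the dyadic sum is controlled by $\norm{\eta^2}_{\dot B^{3/2}_{1,1}}+\norm{\eta^2}_{\dot B^{9/4}_{1,1}}\lesssim\norm{\eta}_{H^s}^2$ via \eqref{BH}, with top index $11/4$ when $\beta=0$. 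The low-frequency loss $\lambda^{-1/2}$ is absorbed by the two derivatives, which contribute $\lambda^2$.
\end{enumerate}
Hence $\epsilon\int_0^T\norm{\eta_x}_{L^\infty_x}dt\lesssim X+X^2$ with no constant term, and the bootstrap closes at \eqref{wtime}. The form \eqref{l1inf} you quote, with the constant $1$ inside $(1+X)^2$, can never yield $e^{c(1+\delta)^2}<4$.

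Your energy estimate, compactness, $L^2$ uniqueness and Bona--Smith steps match the paper.
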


In our second result, we show that the system \eqref{wtbsq}-\eqref{wtbsq-data} is well-posed, provided the initial surface elevation $\eta_0(x)$
 satisfies a non-cavitation condition. 
 This is a physically natural assumption, ensuring that the initial free surface stays strictly above the bottom of the fluid. It reads as follows.
\begin{definition}
Let $d=1$ or $2$ and let $s>\frac{d}{2}$. We say that the initial
elevation $\eta_0\in H^s(\mathbb{R}^d)$ satisfies the
\emph{non-cavitation condition} if
\begin{equation}\label{non-cavt}
\exists  \ h_0\in(0,1)\ \text{such that}\ 
1+\eta_0(x)\ge h_0,\qquad \forall \ x\in\mathbb{R}^d.
\end{equation}
\end{definition}

Throughout, we set $\mathbbm u:=(\eta, \mathbf v)$ with initial data $ \mathbbm u(0)=\mathbbm u_0:= (\eta_0,   \mathbf v_0) $,  and define the space $X_{\mu}^s (\R^d)$ via the norm
\begin{equation*}
\| \mathbbm u\|^2_{X_{\mu}^s} :=\norm{\eta}_{H^s}^2+\norm{\mathbf v}_{H^s}^2+  \norm{  \sqrt{ \mathcal K_{\mu, \beta}} \mathbf v}_{H^s}^2
\end{equation*}
In the case $1/3<\beta\le \beta_0$ for some constant $\beta_0 > 0$, we have
$$ \sqrt{\mathcal K_{\mu,\beta}(\xi) } \sim_{\beta_0} \left\langle \sqrt{ \mu }\, \xi \right\rangle^\frac12 \sim 1+ \mu^\frac12 |\xi|^\frac12. $$
Consequently, 
\begin{equation*}
\| \mathbbm u\|^2_{X_{\mu}^s} \sim \norm{\eta}_{H^s}^2+\norm{\mathbf v}_{H^s}^2+ \sqrt \mu  \norm{ |D|^\frac12   \mathbf v}_{H^s}^2.
\end{equation*}

 We also adopt the shorthand 
$$p_0=1 + \epsilon \norm{\mathbbm u_0}_{L^\infty_x}.$$

Our second result is as follows. 
\begin{thm}\label{thm-wtbq}
     Let $\epsilon,\mu \in (0,1]$,    $\beta>1/3 $,   $d\in \{1,2\}$ and $s>d/4 +5/2$. Suppose that $ \mathbbm u_0:= (\eta_0,   \mathbf v_0) \in X^s_\mu(\R^d) $, where $\eta_0$ 
    satisfy the non-cavitation condition \eqref{non-cavt}  and that $\mathrm{curl}\:\mathbf{v}_0=0$ when $d=2$. Then the Cauchy problem
     \eqref{wtbsq}-\eqref{wtbsq-data} admits a unique solution 
    \begin{equation}\label{U-solnclass}
         \mathbbm u :=(\eta, \mathbf v) \in C([0,T_d];X_\mu^s(\R^d)) \cap C^1([0,T_d];X_\mu^{s-1}(\R^d))
    \end{equation}
    with existence time
    \begin{equation}\label{btime}
       T_d:= T_d(\epsilon,\mu, \beta)=    \left( c h_0^{-1}  (\beta_\ast \mu)^{- \frac {1-\delta_d}4} \epsilon  p_0 \norm{ \mathbbm u_0}_{X^s_\mu} \right)^{ -\frac {4}{4-d+2 { \delta_d }     }},
    \end{equation}
 where $c=c(d,s)$, $ \delta_1=0$ and $ \delta_2>0$ is sufficiently small. Moreover, the solution satisfies the bound
    \begin{equation}\label{U-solnbound}
      \sup_{0 \le t \le T_d} \norm{ \mathbbm u(t)}_{X_\mu^s} \le 4\sqrt{ \frac {p_0 } {h_0}}    \norm{ \mathbbm u_0}_{X_\mu^s} .
    \end{equation}
   Furthermore, the flow map depends continuously on the initial data.

\end{thm}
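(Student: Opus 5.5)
The strategy for Theorem~\ref{thm-wtbq} mirrors the sketch given for the Whitham equation: a modified energy estimate closed by Gr\"onwall, a dispersive control of the $L^1_TL^\infty_x$ norm of the gradient, and then a bootstrap argument.

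\textbf{Step 1: symmetrized energy.} Let $J^s=\langle D\rangle^s$ and set $\mathcal K=\mathcal K_{\mu,\beta}(D)$. I would work with the weighted energy
\[
E_s(t)=\norm{J^s\eta}_{L^2}^2+\int (1+\epsilon\eta)\,|J^s\mathbf v|^2\,dx+\norm{\sqrt{\mathcal K}J^s\mathbf v}_{L^2}^2 ,
\]
or a close variant in which the weight $1+\epsilon\eta$ is chosen so that the top-order terms $\epsilon\eta\nabla\cdot J^s\mathbf v$ and $\epsilon\mathbf v\cdot\nabla J^s\eta$ cancel after integrating by parts. The remaining terms are handled with Kato--Ponce commutator estimates. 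Since $\mathrm{curl}\,\mathbf v=0$ is preserved by the flow, we may use $\nabla(|\mathbf v|^2)/2=(\mathbf v\cdot\nabla)\mathbf v$ in two dimensions. These steps should yield
\[
\frac{d}{dt}E_s\le c\,\epsilon\bigl(\norm{\nabla\eta}_{L^\infty}+\norm{\nabla\mathbf v}_{L^\infty}+\dots\bigr)E_s .
\]
As long as $1+\epsilon\eta\ge h_0/2$ and $1+\epsilon\eta\le 2p_0$, we have
\[
E_s\sim_{h_0,p_0}\norm{\mathbbm u}_{X^s_\mu}^2 .
\]
This comparison is where the factor $\sqrt{p_0/h_0}$ in \eqref{U-solnbound} comes from. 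Gr\"onwall's lemma then gives
\[
\norm{\mathbbm u}_{L^\infty_TX^s_\mu}\le\sqrt{p_0/h_0}\,\exp\!\bigl[c\,\epsilon h_0^{-1}\norm{\nabla\mathbbm u}_{L^1_TL^\infty_x}\bigr]\norm{\mathbbm u_0}_{X^s_\mu}.
\]

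\textbf{Step 2: dispersive control of $\norm{\nabla\mathbbm u}_{L^1_TL^\infty_x}$.} I would diagonalize the linear part using the Riemann variables $\eta\pm\sqrt{\mathcal K}\,\mathcal R\cdot\mathbf v$, where $\mathcal R$ is the Riesz transform. This produces the propagators $e^{\pm it\,|\xi|\sqrt{\mathcal K(\xi)}}$. I would then invoke the dispersive and Strichartz estimates for this phase established earlier in the paper. Because $\beta>1/3$, the phase has no degenerate inflection point, which is what yields the factor $(\beta_\ast\mu)^{-1/4}$.

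For the homogeneous part, the $L^4_TL^\infty_x$ Strichartz estimate in $d=1$ (or $L^{2+}_T L^\infty_x$ in $d=2$) is combined with H\"older in time. For the Duhamel part, I would use the $L^1$--$L^\infty$ decay of order $(\beta_\ast\mu)^{-(1-\delta_d)/4}\,t^{-d/4}$ or similar, together with Sobolev embedding. The nonlinearity is $\epsilon\nabla\cdot(\eta\mathbf v)$ and $\epsilon\nabla|\mathbf v|^2$, and its $H^{s'}\cap L^1$-type norms are bounded by $\epsilon\norm{\mathbbm u}_{X^s_\mu}^2$; this is where the requirement $s>d/4+5/2$ enters. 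Integrating the decay in time gives
\[
\epsilon\norm{\nabla\mathbbm u}_{L^1_TL^\infty_x}\lesssim\Bigl(1+\epsilon(\beta_\ast\mu)^{-\frac{1-\delta_d}{4}}T^{\frac{4-d+2\delta_d}{4}}\norm{\mathbbm u}_{L^\infty_TX^s_\mu}\Bigr)^2 ,
\]
which is the analogue of \eqref{l1inf}.

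\textbf{Step 3: bootstrap and conclusion.} The non-cavitation bound is propagated at the same time: writing $\eta(t)=\eta_0+\int_0^t\partial_t\eta$, the smallness of $\epsilon T\norm{\partial_t\eta}_{L^\infty}$ on the time interval \eqref{btime} keeps $1+\epsilon\eta\ge h_0/2$. A continuity argument then yields \eqref{U-solnbound} on $[0,T_d]$ with $T_d$ as in \eqref{btime}.

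Existence follows by parabolic regularization, or by frequency truncation, with the estimates above holding uniformly in the regularization parameter, followed by compactness. Uniqueness comes from an $X^0_\mu$-type energy estimate on the difference of two solutions. Continuous dependence on the data follows from the Bona--Smith argument, exactly as for Theorem~\ref{thm-wt}.

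\textbf{Main obstacle.} I expect the hardest step to be Step~1: designing the weighted energy so that the quasilinear top-order terms cancel while keeping the $\sqrt{\mathcal K}$-part coercive. In particular, the term $\epsilon\eta\,\nabla\cdot\mathbf v$ at the $H^{s+1/2}$ level has to be absorbed with constants depending only on $h_0$ and $p_0$. The Step~2 estimate of the Duhamel term in $d=2$, where an $\epsilon$-loss $\delta_2$ appears, is also delicate.
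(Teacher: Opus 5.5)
Your architecture is the paper's: weighted energy with Gr\"onwall, Riemann variables $\eta\mp\sqrt{\mathcal K}\,\mathcal R\cdot\mathbf v$, Strichartz and H\"older in time for the free part, $L^1$--$L^\infty$ decay for the Duhamel part, bootstrap, compactness, Bona--Smith. However, Step~3 fails as written, and it fails exactly in the regime the theorem is about.

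You propagate non-cavitation through the smallness of $\epsilon T\|\partial_t\eta\|_{L^\infty_{t,x}}$. But $\partial_t\eta=-\mathcal K\nabla\cdot\mathbf v-\epsilon\nabla\cdot(\eta\mathbf v)$ contains the linear term $\mathcal K\nabla\cdot\mathbf v$, which is of size $\|\mathbbm u\|_{X^s_\mu}$ pointwise in time. So this quantity is small only for $T\lesssim\epsilon^{-1}$. Already for $d=1$, $\epsilon T_1\sim(\beta_\ast\mu/\epsilon)^{1/3}$ up to data-dependent factors, which is large when $\epsilon\ll\beta_\ast\mu$, so your bootstrap stalls at the $1/\epsilon$ scale. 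The paper instead uses the time-integrated bound
\[
\epsilon\bigl(\|\partial_t\eta\|_{L^1_TL^\infty_x}+\|\partial_t\mathbf v\|_{L^1_TL^\infty_x}\bigr)\le\epsilon\,\|p_d\|_{L^\infty_T}\int_0^T q_d(t)\,dt,\qquad q_d=\|\nabla\eta\|_{L^\infty_x}+\|\nabla\mathbf v\|_{L^\infty_x}+\|\mathcal K\nabla\mathbf v\|_{L^\infty_x}.
\]
The dispersive lemma bounds $\epsilon\int_0^Tq_d$ by $\epsilon\sigma_d\|\mathbbm u_0\|_{X^s_\mu}+\epsilon^2\sigma_d^2\|\mathbbm u\|^2_{L^\infty_TX^s_\mu}$, which is small on $[0,T_d]$. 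The paper then bootstraps $1+\epsilon\eta\ge h_0/2$ and $p_d\le 2p_0$ together with the $X^s_\mu$ bound. Your comparison $E_s\sim\|\mathbbm u\|^2_{X^s_\mu}$ also needs the upper bound, and you never propagate it. Consequently Step~2 must control $\|\mathcal K\nabla\mathbf v\|_{L^1_TL^\infty_x}$, not only $\|\nabla\mathbbm u\|_{L^1_TL^\infty_x}$. The same quantity also enters the energy estimate through $\tfrac{\epsilon}{2}\int\partial_t\eta\,|J^s\mathbf v|^2$. Its extra factor $\langle\sqrt\mu\lambda\rangle^{1/2}$ per dyadic block is paid by the $\mu^{1/4}|D|^{1/2}$ part of the $X^s_\mu$ norm, and this is one source of the condition $s>d/4+5/2$.

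Step~1 also fails with the weight you wrote. The ``$1$'' in $1+\epsilon\eta$ adds $\|J^s\mathbf v\|_{L^2}^2$ to the energy. The linear flow conserves $\|\eta\|_{L^2}^2+\|\sqrt{\mathcal K}\mathbf v\|_{L^2}^2$, not this. As a result $\frac{d}{dt}E_s$ contains $2\int J^s\eta\,\nabla\cdot J^s\mathbf v\,dx$, which carries no factor $\epsilon$. It is not even bounded by $\|\mathbbm u\|^2_{X^s_\mu}$, since $X^s_\mu$ only controls $\mu^{1/4}|D|^{1/2}J^s\mathbf v$.

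The weight must be $\epsilon\eta$, giving the energy $\frac12\int (J^s\eta)^2+|\sqrt{\mathcal K}J^s\mathbf v|^2+\epsilon\eta|J^s\mathbf v|^2\,dx$. This is still coercive because $\mathcal K_{\mu,\beta}\ge 1$ for $\beta\ge 1/3$. Hence $\|\sqrt{\mathcal K}J^s\mathbf v\|^2-(1-h_0)\|J^s\mathbf v\|^2\ge h_0\|\sqrt{\mathcal K}J^s\mathbf v\|^2$. With this weight, $\epsilon\int\eta\,J^s\nabla\cdot\mathbf v\,J^s\eta$ cancels. The remaining term $\tfrac{\epsilon}{2}\int\sqrt{\mathcal K}J^s\nabla|\mathbf v|^2\cdot\sqrt{\mathcal K}J^s\mathbf v$ is handled by the curl-free structure and a commutator estimate for the symbol $\sqrt{\mathcal K}J^s$ (Lemma~\ref{lm-kpn}), not by plain Kato--Ponce.

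A smaller correction to Step~2: the Duhamel decay rates are $(\beta_\ast\mu)^{-1/2}|t|^{-1/2}$ for $d=1$ and $(\beta_\ast\mu)^{-(1-\delta)/2}|t|^{-1+\delta}$ for $d=2$, not $t^{-d/4}$. Integrating these twice in time is what produces the term $\epsilon\sigma_d^2\|\mathbbm u\|^2_{L^\infty_TX^s_\mu}$ with $\sigma_d=(\beta_\ast\mu)^{-(1-\delta_d)/4}T^{(4-d+2\delta_d)/4}$.
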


\begin{remark}
   The proofs of the aforementioned theorems exploit the dispersive properties of the propagators $S_{m_\beta}(\pm t)$ associated to the equations \eqref{wtbsq} and \eqref{wt}.  In the range, $0<\beta \le 1/3$, however, there is no lower bound estimate for the second derivative of the phase function, 
   $|m''_\beta (r)|$ (see Section \ref{sec5}, Lemma \ref{lm:fbeta-threshold}), causing the standard stationary phase analysis—and consequently the underlying dispersive estimates to break down. 
   The same degeneracy appears directly in the asymptotic expansion of the nonlocal Fourier multiplier  $\mathcal K_{\mu, \beta}$ 
  given in \eqref{K-exp}.

\end{remark}

The paper is organized as follows. In Section 2, we introduce the notation, recall commutator and bilnear estimates, and establish the dispersive and Strichartz estimates associated with equations \eqref{wtbsq} and \eqref{wt}. In Section 3, we derive energy and a priori estimates for solutions to \eqref{wt}, from which the result in  Theorem \ref{thm-wt} follows.  In Section 4,  we establish corresponding energy and a priori estimates for solutions to \eqref{wtbsq}, which yield the result stated in Theorem \ref{thm-wtbq}. In the last section, we present lower bound estimates for the first and second order derivatives of the phase function associated to the equations \eqref{wtbsq} and \eqref{wt}.

\section{Notation and preliminary estimates}

\subsection{Notation}
For any positive numbers $a$ and $b$, the notation $a\lesssim b$ stands for $a\le cb$, where $c$ is a positive constant that may change from line to line. Moreover, we denote $a \sim b$  when  $a \lesssim b$ and $b \lesssim a$.  If $A$ and $B$ are two operators, then $[A,B]$ denotes the commutator between
    $A$ and $B$, i.e.
    \[
        [A,B]f = ABf - BAf.
    \]
We denote the Riesz transform by $\mathcal{R}:=D/|D|$, where $D=-i\nabla$ and $|D|=\sqrt{-\Delta}$. For any $s\in\mathbb{R}$, $J^s$ will denote the Bessel potential of order $-s$, defined by 
 $J^s f=\langle D \rangle^s f$,
    where  $\langle r \rangle:= (1+|r|^2)^\frac12$.
    In particular,  the $L^2$-based Sobolev space $H^s$ and its homogeneous counter part $\dot H^s$ can be defined via the norms
    $\|f\|_{H^s} = \|J^s f\|_{L^2}$ and $\|f\|_{\dot H^s} = \||D|^s f\|_{L^2}$.

 Let
$
\rho(s)
=\chi\left(s\right)-\chi \left(2s\right),
 $ where
  $\chi$ is a smooth cutoff function with the following properties:
\begin{equation*}
\chi \in C_0^{\infty}(\mathbb R), \quad 0 \le \chi \le 1, \quad
\chi_{|_{[-1,1]}}=1 \quad \mbox{and} \quad  \mbox{supp}(\chi)
\subset [-2,2].
\end{equation*}
 We then define the frequency projection $P_\lambda$ by
\begin{align*}
\widehat{P_{\lambda} f}(\xi)  = \rho_\lambda(|\xi|)\widehat { f}(\xi) 
 \end{align*}
 for 
  $\lambda \in  2^\Z$, where $\rho_{\lambda}(s):=\rho\left(s/\lambda\right)$.
 Thus, $\supp \rho_\lambda= 
\{ s\in \R: \lambda/ 2 \le |s| \le 2\lambda \}$. 
 Sometimes, we write $f_\lambda: =P_\lambda f$.

  For $s\in\R$, $1\le p\le\infty$ and $1\le q<\infty$,
 the homogeneous and inhomogeneous Besov spaces $\dot B^s_{p,q}$ and $ B^s_{p,q}$ are  defined via the norms 
\[
\|f\|_{\dot B^s_{p,q}}
\sim
\left(
\sum_{\lambda\in 2^{\mathbb Z}}
\lambda^{sq}
\|P_\lambda f\|_{L^p}^{q}
\right)^{1/q} \quad \text{and} \quad  \|f\|_{B^s_{p,q}}
\sim
\left(
\sum_{\lambda\in 2^{\mathbb Z}}
\langle\lambda\rangle^{sq}
\|P_\lambda f\|_{L^p}^{q}
\right)^{1/q}.
\]
In the special case $p=q=2$, we obtain the homogeneous and inhomogeneous Sobolev spaces:
$  \dot B^s_{2,2} = \dot H^s$ and $ B^s_{2,2} = H^s$.
If $T>0$, we define the spaces $L^q\big((0,T) ; X\big)$ and $L^q\big( \mathbb R ; X\big)$ respectively through the norms
$$
\|f\|_{L^q_TX} = \left( \int_0^T \|f(\cdot,t)\|_{X}^q dt \right)^{\frac1q} \quad  \textrm{and} \quad \|f\|_{L^q_tX} = \left( \int_{\mathbb R} \|f(\cdot,t)\|_{X}^q dt \right)^{\frac1q} \, ,
$$
when $1 \le q < \infty$, with the usual modifications when $q=+\infty$.

 \subsection{Commutator, Bilinear and Dispersive estimates}

\begin{lemma}[Kato-Ponce estimates \cite{Kato1988}]
    Let $s \geq 0$, $p\in (1,\infty)$, $f\in H^s(\R^d)$, and $g\in H^{s-1}(\R^d)$.  Then
    \begin{align}\label{kato_ponce}
        \norm{[J^s,f] g}_{L^p_x} \lesssim \norm{\nabla f}_{L^\infty_x} \norm{J^{s-1} g}_{L^p_x} + \norm{J^s f}_{L^p_x} \norm{ g}_{L^\infty_x}.
    \end{align}
    \vspace{2mm}
    Moreover,  let $p_1,p_4 \in (1,\infty]$ and $p_2,p_3\in (1,\infty)$ be such that $\frac{1}{p}= \frac{1}{p_1} + \frac{1}{p_2} = \frac{1}{p_3} + \frac{1}{p_4}$. Then
    \begin{align}\label{eq:kato_ponce_ineq}
        \norm{J^s(f g)}_{L^p_x} \lesssim \norm{f}_{L^{p_1}_x} \norm{J^{s} g}_{L^{p_2}_x}  + \norm{J^s f}_{L^{p_3}_x} \norm{ g}_{L^{p_4}_x}
    \end{align}
    for $f,g\in H^s(\R^d)$.
\end{lemma}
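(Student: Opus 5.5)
The plan is to prove both inequalities by Littlewood--Paley (paraproduct) decomposition, working frequency-localized with the projections $P_\lambda$ introduced above. I treat \eqref{eq:kato_ponce_ineq} first, since the commutator bound \eqref{kato_ponce} is handled by the same decomposition with one extra cancellation. Write
\[
fg=\sum_{\lambda}P_{<\lambda/8}f\,P_\lambda g+\sum_\lambda P_\lambda f\,P_{<\lambda/8}g+\sum_{\lambda\sim\lambda'}P_\lambda f\,P_{\lambda'}g ,
\]
that is, low--high, high--low and high--high interactions.

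For the low--high piece, each summand has Fourier support in an annulus of size $\sim\lambda$. Hence $J^s$ acting on it behaves like multiplication by $\langle\lambda\rangle^s$. By the Littlewood--Paley square function theorem (valid since $1<p<\infty$) and the Fefferman--Stein vector-valued maximal inequality, using $|P_{<\lambda/8}f|\lesssim Mf$, the low--high piece is bounded by
\[
\Bigl\|\,Mf\,\Bigl(\sum_\lambda\langle\lambda\rangle^{2s}|P_\lambda g|^2\Bigr)^{1/2}\Bigr\|_{L^p}\lesssim \|f\|_{L^{p_1}}\|J^sg\|_{L^{p_2}} .
\]
The case $p_1=\infty$ is allowed here because $\|Mf\|_{L^\infty}\le\|f\|_{L^\infty}$. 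The high--low piece is symmetric and gives $\|J^sf\|_{L^{p_3}}\|g\|_{L^{p_4}}$. The high--high piece has output frequencies $\lesssim\lambda$. Since $s\ge0$, the factor $\langle\mu\rangle^{s}$ at output frequency $\mu\lesssim\lambda$ can be moved onto the factor $P_\lambda f$, and summing over $\mu$ then gives the same bound. This summation is the delicate point when $p_3$ is near $1$; there I would use the standard vector-valued argument, as in Kato--Ponce and Grafakos--Oh.

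For the commutator $[J^s,f]g$, I use the same decomposition with $f$ in the role of the first factor. In the low(f)--high(g) part, I write
\[
[J^s,P_{<\lambda/8}f]P_\lambda g
\]
as a bilinear operator with symbol $\langle\xi+\eta\rangle^s-\langle\eta\rangle^s$, where $|\xi|\ll|\eta|\sim\lambda$. By the mean value theorem this symbol equals $\xi\cdot\int_0^1\nabla\langle\eta+\theta\xi\rangle^s\,d\theta$. This gains one derivative on $f$ and loses one on $g$, which gives the bound $\|\nabla f\|_{L^\infty}\|J^{s-1}g\|_{L^p}$. To make this rigorous I would treat the symbol as a Coifman--Meyer multiplier, or expand it in a Fourier series on the dyadic region. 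The remaining parts are the high(f)--low(g) and high--high terms, together with the terms $fJ^sg$ in those regimes. These are bounded directly by $\|J^sf\|_{L^p}\|g\|_{L^\infty}$ as in the product estimate, except that the part of $fJ^sg$ with $f$ at high frequency is controlled by $\|\nabla f\|_{L^\infty}\|J^{s-1}g\|_{L^p}$, using the fact that $g$ sits at comparable or lower frequency.

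I expect the main obstacle to be the Coifman--Meyer control of the commutator symbol uniformly in $\lambda$, together with the endpoint exponents $p_1,p_4=\infty$. In practice, as the lemma's citation suggests, I would simply invoke Kato--Ponce \cite{Kato1988}, supplemented by later refinements covering the $L^\infty$ endpoints.
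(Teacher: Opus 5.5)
The paper gives no proof of this lemma; it simply quotes Kato--Ponce, which is also where your proposal ends up. What you add is an outline of the standard Littlewood--Paley/paraproduct proof, which shows where $1<p<\infty$ and $s>0$ enter. It is sound as an outline, but you locate the main difficulty in the wrong term, and a few steps are stated too loosely.

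In the high--high piece of the product, summing $\langle\mu\rangle^s$ over output frequencies in $\ell^2_\mu$ (Schur's test, then Fefferman--Stein) uses the decay $(\mu/\lambda)^s$, so it needs $s>0$; at $s=0$ both estimates are trivial. Output frequencies $\mu\lesssim 1$ must also be lumped into one block handled by H\"older, since $\sum_{\mu\le\lambda,\ \mu\in 2^{\Z}}\langle\mu\rangle^s$ diverges. That summation is constrained by the output exponent $p>1$, not by $p_3$.

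The low--high commutator symbol $\langle\xi+\eta\rangle^s-\langle\eta\rangle^s$ is actually the easy part. After your Fourier-series expansion, the $L^\infty$ factor $\nabla f$ sits at low frequency and is bounded pointwise, exactly as in the product estimate. The endpoints $p_1,p_4=\infty$ in the product estimate are harmless too, because there the $L^\infty$ factor is always at lower frequency or paired with the decay $(\mu/\lambda)^s$.

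The term that genuinely needs the Coifman--Meyer (equivalently, BMO--Carleson paraproduct) bound is the high--high part of $fJ^sg$, namely $\sum_\lambda P_\lambda f\,\widetilde P_\lambda J^sg$. Here no $J^s$ acts on the output, so there is no decay in $\mu/\lambda$. Whichever right-hand term you aim for, the $L^\infty$ function ($\nabla f$ or $g$) carries the top frequency. The square-function-plus-maximal-function method then fails, because the Littlewood--Paley square function is unbounded on $L^\infty$. The high--low part of $fJ^sg$, by contrast, goes elementarily into $\norm{J^sf}_{L^p}\norm{g}_{L^\infty}$: $\langle\lambda\rangle^{-s}P_{<\lambda/8}J^s$ is uniformly bounded on $L^\infty$ and the output is annular.

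Dyadic pieces of $f$ with $\lambda<1$ cannot be charged to $\norm{\nabla f}_{L^\infty}\norm{J^{s-1}g}_{L^p}$. Writing $P_\lambda f=\lambda^{-1}(\lambda P_\lambda f)$ costs $\lambda^{-1}$, so these pieces belong in the other term.

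Your method is tied to $1<p<\infty$, as is the lemma, which fixes $p\in(1,\infty)$. The paper, however, applies \eqref{eq:kato_ponce_ineq} with $p_1=p_2=2$, i.e.\ $p=1$, in the proof of \eqref{BH}. That case is covered by neither, and needs the Grafakos--Oh extension of the Kato--Ponce inequality to $p\le 1$.
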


\begin{definition}[Symbol class {\cite[Definition B.7]{L13}}] \label{def:symb_class}
    We say that a symbol $m(D)$ is an element of the symbol class $\mathbb S^s$ with $s\in \R$, if $\xi \in \R^d \mapsto m (\xi) \in \mathbb{C}$ is smooth and satisfies 
    \begin{align*}
        \forall \alpha \in \N^d, \ \ \sup_{\xi \in \R^d} \angles{\xi}^{|\alpha|- s} \Big| \frac{\partial^\alpha}{\partial\xi^\alpha} m(\xi)  \Big| < \infty. 
    \end{align*}
    We also introduce the following semi-norm
    \begin{align*}
        \mathcal{N}^s(m) = \sup_{\alpha\in \N^d, \:  |\alpha| \leq 2+d+\lceil\frac{d}{2} \rceil}\: \sup_{\xi \in \R^d} \angles{\xi}^{|\alpha|-s} \Big| \frac{\partial^\alpha}{\partial \xi^\alpha }  m(\xi) \Big|.
    \end{align*}
\end{definition}
This symbol class satisfies the following commutator estimate found in \cite[Lemma B.8]{L13} (see also \cite{L2006}).

\begin{lem}\label{lm:gen_kato_ponce}
    Let $s\geq 0$, $s_0>\frac{d}{2}$, and $m \in \mathbb S^s$. If $f\in H^s(\R^d)\cap H^{s_0+1}(\R^d)$ and $g\in H^{s-1}(\R^d) \cap H^{t_0}(\R^d)$, then
    \begin{align}\label{eq:gen_kato_ponce_Linfty}
        \norm{[m(D),f]g}_{L^2_x} &\lesssim \mathcal{N}^s(m) (\norm{\nabla f}_{L^\infty_x}\norm{g}_{H^{s-1}_x} + \norm{\nabla f}_{H^{s-1}_x}\norm{g}_{L^\infty_x}).
\end{align}
\end{lem}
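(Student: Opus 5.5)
The plan is a paradifferential argument built on Bony's decomposition, reducing everything to the Taylor expansion of the symbol. Write $S_k=\sum_{\lambda\le 2^{k}}P_\lambda$ and decompose $fg = T_f g + T_g f + R(f,g)$, where $T_f g=\sum_k S_{k-3}f\,P_{2^k}g$ is the paraproduct and $R$ is the resonant part. Then
$$[m(D),f]g = [m(D),T_f]g + m(D)\bigl(T_g f + R(f,g)\bigr) - T_{m(D)g}f - R(f,m(D)g).$$
It is convenient first to split $f=f_{\mathrm{lo}}+f_{\mathrm{hi}}$ with $f_{\mathrm{lo}}=\chi(|D|)f$. For $f_{\mathrm{hi}}$ we have $\|J^s f_{\mathrm{hi}}\|_{L^2}\sim\|\nabla f_{\mathrm{hi}}\|_{H^{s-1}}\lesssim \|\nabla f\|_{H^{s-1}}$. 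For $f_{\mathrm{lo}}$, Bernstein gives $\|\nabla f_{\mathrm{lo}}\|_{L^\infty}\lesssim\|\nabla f\|_{L^\infty}$.

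\textbf{Step 1 (main term).} I will handle $[m(D),T_f]g$ and the whole commutator with $f_{\mathrm{lo}}$ by the same argument. Each piece $[m(D),S_{k-3}f]P_{2^k}g$ has the bilinear symbol
$$\bigl(m(\xi+\eta)-m(\xi)\bigr)\widehat{S_{k-3}f}(\eta)\,\widehat{P_{2^k}g}(\xi),\qquad |\eta|\ll|\xi|\sim 2^k .$$
Taylor's formula gives
$$m(\xi+\eta)-m(\xi)=\eta\cdot\int_0^1\nabla m(\xi+t\eta)\,dt .$$
The factor $\eta$ turns into $\nabla S_{k-3}f$, while $\nabla m(\xi+t\eta)$ restricted to the annulus $|\xi|\sim 2^k$ is a Coifman--Meyer type symbol of size $\langle 2^k\rangle^{s-1}$. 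Its regularity constants are controlled by finitely many derivatives of $m$, namely those counted in $\mathcal N^s(m)$. This yields
$$\|[m(D),S_{k-3}f]P_{2^k}g\|_{L^2}\lesssim \mathcal N^s(m)\,\|\nabla f\|_{L^\infty}\,\langle 2^k\rangle^{s-1}\|P_{2^k}g\|_{L^2}.$$
The outputs are frequency-localized near $2^k$, so almost orthogonality sums the pieces to $\|\nabla f\|_{L^\infty}\|g\|_{H^{s-1}}$. For $f_{\mathrm{lo}}$ the same computation applies to all frequencies of $g$, including $|\xi|\lesssim1$, where $|\nabla m|\lesssim\mathcal N^s(m)$.

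\textbf{Step 2 (remaining terms, with $f_{\mathrm{hi}}$).} The terms $m(D)T_g f_{\mathrm{hi}}$ and $m(D)R(f_{\mathrm{hi}},g)$ have output frequency $\lesssim$ the frequency of $f_{\mathrm{hi}}$, which is $\gtrsim1$. Since $s\ge0$, this gives
$$\|m(D)(T_gf_{\mathrm{hi}}+R(f_{\mathrm{hi}},g))\|_{L^2}\lesssim\mathcal N^s(m)\,\|g\|_{L^\infty}\|J^sf_{\mathrm{hi}}\|_{L^2}.$$
For $T_{m(D)g}f_{\mathrm{hi}}$ and $R(f_{\mathrm{hi}},m(D)g)$, I will place the frequency weight on $f_{\mathrm{hi}}$ and use $\|S_{k}m(D)g\|_{L^\infty}\lesssim \langle2^k\rangle^{s}\|g\|_{L^\infty}$ in the paraproduct. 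In the resonant term the frequencies are comparable, so the $\langle\xi\rangle^s$ weight moves onto $f_{\mathrm{hi}}$. All these pieces are bounded by $\mathcal N^s(m)\,\|g\|_{L^\infty}\|\nabla f\|_{H^{s-1}}$. The hypotheses $f\in H^{s_0+1}$ and $g\in H^{t_0}$ are only used to make every quantity finite and to justify the manipulations; a density argument then concludes.

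\textbf{Main obstacle.} I expect the hardest part to be the endpoint bookkeeping. One issue is the resonant and low-frequency terms when $s$ is close to $0$. There, $\|g\|_{L^\infty}$ does not square-sum over Littlewood--Paley pieces, so the weight must genuinely be carried by $f_{\mathrm{hi}}$, and one has to check that the remaining sum converges. The other issue is verifying that the multiplier bound in Step 1 uses only the $2+d+\lceil d/2\rceil$ derivatives appearing in $\mathcal N^s(m)$. For this I would use the Coifman--Meyer theorem in the explicit form given in \cite{L13,L2006}.
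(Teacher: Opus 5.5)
The paper does not prove this lemma; it quotes it from Lannes \cite[Lemma B.8]{L13}, \cite{L2006}. A self-contained paradifferential proof along your lines is reasonable, and your Step 1 is sound. Each piece $[m(D),S_{k-3}f]P_{2^k}g$ has a bilinear kernel with $L^1$ norm $\lesssim \mathcal N^s(m)\langle 2^k\rangle^{s-1}$, which uses only about $d+2$ derivatives of $m$. The outputs lie in annuli, and $f_{\mathrm{lo}}$ is handled as you say. Step 2, however, does not close as written.

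\textbf{The resonant term $R(f_{\mathrm{hi}},m(D)g)$.} The pieces $P_{2^k}f_{\mathrm{hi}}\,\widetilde P_{2^k}m(D)g$ have output in balls $|\zeta|\lesssim 2^k$, so there is no almost orthogonality. After moving the weight onto $f$, the triangle inequality only gives $\|g\|_{L^\infty}\sum_k\langle 2^k\rangle^s\|P_{2^k}f_{\mathrm{hi}}\|_{L^2}$. This is a $B^s_{2,1}$ norm, which is not controlled by $\|\nabla f\|_{H^{s-1}}$. Localizing the output with $P_{2^j}$ does not help: it produces tail sums $\sum_{k\ge j}$ that are not square-summable either.

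Compare $m(D)R(f_{\mathrm{hi}},g)$: for $s>0$ the weight sits on the output and yields a factor $2^{-(k-j)s}$. For $R(f_{\mathrm{hi}},m(D)g)$ there is no such gain for any $s$. So the sum you plan to ``check converges'' diverges, and the problem is not confined to $s$ near $0$. What you need is the endpoint bilinear bound
\[
\Bigl\|\sum_k P_{2^k}F\,\nu_k(D)g\Bigr\|_{L^2}\lesssim \mathcal N^s(m)\,\|F\|_{L^2}\|g\|_{L^\infty},\qquad F=J^sf_{\mathrm{hi}},\quad \nu_k(\xi)=\langle 2^k\rangle^{-s}\widetilde\rho_{2^k}(|\xi|)\,m(\xi),
\]
up to harmless modifications of the projectors. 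This bound is true, but it needs a genuinely bilinear tool. One option is Coifman--Meyer applied to the symbol $\sum_k\rho_{2^k}(|\eta|)\nu_k(\xi)$. Another is duality combined with the Carleson-measure estimate $\sum_k\|\nu_k(D)g\,S_{k+4}H\|_{L^2}^2\lesssim\|g\|_{BMO}^2\|H\|_{L^2}^2$. The same input, namely $L^2\times L^\infty\to L^2$ boundedness of $R$, is also needed for $m(D)R(f_{\mathrm{hi}},g)$ at $s=0$.

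\textbf{The paraproduct $T_{m(D)g}f_{\mathrm{hi}}$ at $s=0$.} The bound $\|S_km(D)g\|_{L^\infty}\lesssim\langle 2^k\rangle^s\|g\|_{L^\infty}$ holds for $s>0$, via a geometric sum with constant of order $1/s$. It fails at $s=0$. For example, take $d=1$ and $m(\xi)=(1-\chi(\xi))\,\mathrm{sgn}\,\xi\in\mathbb S^0$; the kernel of $S_km(D)$ is a truncated Hilbert kernel with $L^1$ norm $\sim k$.

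A cleaner route, valid for every $s\ge0$, is to put $f$ in $L^\infty$ in this term:
\[
\|S_{k-3}(m(D)g)\,P_{2^k}f_{\mathrm{hi}}\|_{L^2}\le 2^{-k}\|\nabla f\|_{L^\infty}\,\|S_{k-3}m(D)g\|_{L^2}.
\]
Moreover,
\[
2^{-k}\|S_{k-3}m(D)g\|_{L^2}\lesssim \mathcal N^s(m)\Bigl(2^{-k}\|S_0g\|_{L^2}+\sum_{0<j\le k-3}2^{-(k-j)}2^{j(s-1)}\|P_{2^j}g\|_{L^2}\Bigr).
\]
Young's inequality together with almost orthogonality then bounds this term by $\mathcal N^s(m)\|\nabla f\|_{L^\infty}\|g\|_{H^{s-1}}$.

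With these two repairs the argument closes. The derivative count you worried about is not the real obstacle.
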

\vspace{2mm}
\begin{lemma} \label{lm-kpn}
Let $\mu \in (0,1]$, $ \beta > 1/3$, $s \ge 0$, $s_0 > \frac{d}{2}$,
$f \in H^s(\mathbb{R}^d)\cap H^{s_0+1}(\mathbb{R}^d)$,
and
$g \in H^{s-1}(\mathbb{R}^d)\cap H^{s_0}(\mathbb{R}^d)$.
Then
\begin{equation}
\label{commuest_K}
\left\|
\bigl[ \mathcal  K_{\mu,\beta}^\frac12 J^s ,f\bigr]g
\right\|_{L_x^2}
\lesssim
\|\nabla f\|_{L_x^\infty}
\left\|  J^\frac12_{\mu} J^{s-1} g\right\|_{L_x^2}
+
\left\| J^\frac12_{\mu} J^sf\right\|_{L_x^2}
\|g\|_{L_x^\infty},
\end{equation}
where $J_{\mu}=  \angles{\sqrt \mu D}   $.
\end{lemma}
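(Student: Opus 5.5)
The plan is to reduce \eqref{commuest_K} to the generalized Kato--Ponce estimate of Lemma \ref{lm:gen_kato_ponce}, applied to a suitably normalized symbol. Write
\[
\mathcal K_{\mu,\beta}^{1/2}(\xi)\angles{\xi}^s = a_\mu(\xi)\, \angles{\sqrt\mu\,\xi}^{1/2}\angles{\xi}^s,
\qquad a_\mu(\xi):=\frac{\mathcal K_{\mu,\beta}^{1/2}(\xi)}{\angles{\sqrt\mu\,\xi}^{1/2}}.
\]
Since $\beta>1/3$, the factor $\mathcal K_{\mu,\beta}$ is bounded above and below by positive multiples of $\angles{\sqrt\mu\xi}$, uniformly in $\mu$. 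Moreover $a_\mu(\xi)=a_1(\sqrt\mu\,\xi)$, where $a_1(\zeta)=\big(\tanh|\zeta|(1+\beta|\zeta|^2)/(|\zeta|\angles{\zeta})\big)^{1/2}$. This $a_1$ is smooth, including at $\zeta=0$, because $\tanh r/r$ is even and analytic. It is bounded away from $0$ and $\infty$, and it is a zeroth-order classical symbol: $|\partial^\alpha a_1(\zeta)|\lesssim \angles{\zeta}^{-|\alpha|}$. The scaling then gives $|\partial^\alpha a_\mu(\xi)|\lesssim \mu^{|\alpha|/2}\angles{\sqrt\mu\xi}^{-|\alpha|}\le \angles{\xi}^{-|\alpha|}$, uniformly in $\mu\in(0,1]$. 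Note that $\sqrt\mu\,\angles{\sqrt\mu\xi}^{-1}\le \angles{\xi}^{-1}$ holds precisely because $\mu\le1$. The same argument applies to $\angles{\sqrt\mu\xi}^{1/2}$, which is $\angles{\xi}^{1/2}$-bounded in the appropriate $\mu$-weighted sense.

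The key step is to treat the full symbol $m_\mu(\xi):=\mathcal K_{\mu,\beta}^{1/2}(\xi)\angles{\xi}^s$ in the form $m_\mu = \tilde m_\mu\, \angles{\sqrt\mu\xi}^{1/2}\angles{\xi}^{s}$ with $\tilde m_\mu=a_\mu\in \mathbb S^0$ uniformly. I would then rerun the proof of Lemma \ref{lm:gen_kato_ponce}, i.e.\ the paraproduct/Coifman--Meyer argument of \cite{L13,L2006}, with the weight $J_\mu^{1/2}J^s$ in place of $J^s$. Concretely, I decompose $fg$ into low--high, high--low and high--high Littlewood--Paley pieces.

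In the low--high piece ($f$ at low frequency), the commutator $[m_\mu(D),P_{\ll\lambda}f]P_\lambda g$ is handled by the mean value theorem on the symbol. This gives $\|\nabla f\|_{L^\infty}$ times $\lambda^{-1}\sup_{|\xi|\sim\lambda}|m_\mu|\,\|P_\lambda g\|_{L^2}$. The uniform bound $|\nabla m_\mu(\xi)|\lesssim \angles{\xi}^{-1}\angles{\sqrt\mu\xi}^{1/2}\angles{\xi}^{s}$ follows from the symbol estimates above, and square summing yields $\|\nabla f\|_{L^\infty}\|J_\mu^{1/2}J^{s-1}g\|_{L^2}$.

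In the high--low and high--high pieces, I do not exploit the commutator structure and simply bound each of $m_\mu(D)(fg)$ and $f\,m_\mu(D)g$ separately. Since the symbols are comparable on dyadic annuli, this gives $\|J_\mu^{1/2}J^s f\|_{L^2}\|g\|_{L^\infty}$. The term $f\,m_\mu(D)g$ restricted to high $f$-frequencies is bounded by moving derivatives onto $f$ in the same way.

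The main obstacle is uniformity in $\mu$. The Kato--Ponce constants depend on finitely many symbol seminorms $\mathcal N$, and the point is that the two-scale weight $\angles{\sqrt\mu\xi}^{1/2}\angles{\xi}^s$ obeys the symbol-type bounds $|\partial^\alpha w|\lesssim\angles{\xi}^{-|\alpha|}w$ with constants independent of $\mu$. This is the computation I would verify first; everything else is the standard argument. The hypothesis $s_0>d/2$ and the regularity assumptions on $f,g$ enter only to justify that all terms are finite, as in Lemma \ref{lm:gen_kato_ponce}.
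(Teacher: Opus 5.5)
Your route differs from the paper's, and as written it has a gap. The symbol analysis is right: with $w(\xi)=\angles{\sqrt\mu\,\xi}^{1/2}\angles{\xi}^{s}$ you get $|\partial^\alpha m_\mu(\xi)|\lesssim\angles{\xi}^{-|\alpha|}w(\xi)$ uniformly in $\mu\in(0,1]$. The low--high commutator piece also works, provided you use an inhomogeneous Littlewood--Paley decomposition, so that the bound $\lambda^{-1}\sup_{|\xi|\sim\lambda}|m_\mu|$ is only used for $\lambda\gtrsim 1$.

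The problem is the high--high piece. There you say it suffices to bound $m_\mu(D)(fg)$ and $f\,m_\mu(D)g$ separately, using comparability of the symbols on dyadic annuli. For $\sum_\lambda P_\lambda f\cdot m_\mu(D)\widetilde P_\lambda g$ this fails. Each summand is bounded by $w(\lambda)\norm{P_\lambda f}_{L^2}\norm{g}_{L^\infty}$, but its output is spread over all frequencies $\lesssim\lambda$. So there is no almost-orthogonality, and you only obtain $\norm{g}_{L^\infty}\sum_\lambda w(\lambda)\norm{P_\lambda f}_{L^2}$. This is an $\ell^1$-summed (Besov $B_{2,1}$-type) quantity, and it is not controlled by $\norm{J_\mu^{1/2}J^s f}_{L^2}$.

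To close this step you need the Coifman--Meyer $L^2\times L^\infty\to L^2$ estimate. Apply it to $(J_\mu^{1/2}J^sf,\,g)$ with the bilinear symbol $\sum_\lambda\rho_\lambda(\xi)\widetilde\rho_\lambda(\eta)\,m_\mu(\eta)/w(\xi)$, where $\widetilde\rho_\lambda=\rho_{\lambda/2}+\rho_\lambda+\rho_{2\lambda}$. You must also check that its Coifman--Meyer seminorms are uniform in $\mu$; they are, by your bounds and $w(\eta)\sim w(\xi)$ for $|\eta|\sim|\xi|$. You mention Coifman--Meyer, but your sketch never actually uses it, and this is exactly where it is indispensable.

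A related defect appears at the endpoint $s=0$. Your geometric summations need $w$ to grow geometrically. Examples are $\norm{m_\mu(D)P_{\ll\lambda}g}_{L^\infty}\lesssim w(\lambda)\norm{g}_{L^\infty}$ obtained by summing dyadic blocks, and Schur's test for $m_\mu(D)$ applied to the high--high output. For $s=0$, $w$ is essentially constant on $1\le|\xi|\le\mu^{-1/2}$. You then lose a factor $\log\mu^{-1}$, unless $m_\mu(D)$ restricted to $|\xi|\lesssim\mu^{-1/2}$ is treated as a single multiplier.

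The paper avoids all of this by never reopening the commutator lemma. It cuts the symbol at $|\xi|\sim\mu^{-1/2}$ into two pieces:
\begin{itemize}
\item $m_l=\chi(\sqrt\mu\,\xi)\,m\in\mathbb S^{s}$ with $\mathcal N^{s}(m_l)\lesssim 1$;
\item $m_h=(1-\chi(\sqrt\mu\,\xi))\,m\in\mathbb S^{s+\frac12}$ with $\mathcal N^{s+\frac12}(m_h)\lesssim\mu^{1/4}$.
\end{itemize}
This works because on each piece your weight $w$ is comparable to a pure power of $\angles{\xi}$, times $\mu^{1/4}$ on the high piece. Lemma~\ref{lm:gen_kato_ponce} is then applied as a black box to each piece, so every paraproduct and Coifman--Meyer issue stays inside it. The inequality $\mu^{1/4}\angles{\xi}^{1/2}\le\angles{\sqrt\mu\,\xi}^{1/2}$ then converts the resulting norms into those of \eqref{commuest_K}.

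Your symbol bounds are exactly what is needed to verify these two seminorm estimates, so the quickest repair is to adopt this splitting. Your weighted-class route can also be completed, once the high--high term goes through Coifman--Meyer with $\mu$-uniform constants and $s=0$ is handled separately. It would give a statement for general slowly varying weights, but at the price of redoing the paraproduct analysis that the paper gets for free.
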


\begin{proof}
We may assume $1/3<\beta\le \beta_0$ for some constant $\beta_0 > 0$.
Observe that
$$ \mathcal K_{\mu,\beta}^\frac12(\xi)=\left\langle \sqrt{\beta \mu }\, \xi \right\rangle \cdot   \mathcal T_{\mu}^\frac12(\xi),$$
and hence
$$\mathcal K_{\mu,\beta}^\frac12(\xi) \sim \left\langle \sqrt{\beta \mu }\, \xi \right\rangle \angles{\sqrt \mu \xi}^{-1/2} \lesssim_{\beta_0} \angles{\sqrt \mu \xi}^{1/2} .$$
Moreover, the derivatives of $\left\langle \sqrt{\beta \mu }\, \xi \right\rangle$ and $\sqrt{ \mathcal T_{\mu} (\xi)}$ satisfy the estimates 
\begin{align*}
      \left| \partial_\xi^\alpha \left\langle \sqrt{\beta \mu }\, \xi \right\rangle\right|
 & \lesssim_\alpha (\beta \mu)^{|\alpha|/2} \left\langle \sqrt{\beta \mu}\, \xi \right\rangle^{1-|\alpha|}
\end{align*}
and \begin{align*}
  \left| \partial_\xi^\alpha \sqrt{ \mathcal T_{\mu} (\xi)}\right |
 \lesssim_\alpha  \mu^{|\alpha|/2} \left\langle \sqrt{ \mu}\, \xi \right\rangle^{-\frac 12-|\alpha|}.
\end{align*}
for all $k\ge 0$.
Applying the Leibniz rule, we obtain
\begin{equation}\label{Kdervest}
\begin{split}
          \left| \partial_\xi^\alpha  \mathcal K_{\mu,\beta}^\frac12(\xi)\right|
  &\lesssim_\alpha (1+\beta)^{|\alpha|/2}\,\mu^{|\alpha|/2}\,
\angles{\sqrt{\beta\mu}\,\xi}\,\angles{\sqrt{\mu}\,\xi}^{-\frac12-|\alpha|}
\\
 &\lesssim_{\alpha, \beta_0} \mu^{|\alpha|/2}\,
\angles{\sqrt{\mu}\,\xi}^{\frac12-|\alpha|}.
\end{split}
\end{equation}
Define 
$m(\xi):= \mathcal K_{\mu,\beta}^\frac12(\xi) \angles{\xi}^s .$
We split this function into its low and high frequency components, $m= m_{ l}+ m_{ h}$, where
\[
 m_{l}(\xi):=\chi(\sqrt\mu\,\xi) m (\xi),\qquad
m_{h}(\xi):=\big(1-\chi(\sqrt\mu\,\xi)\big)m(\xi).
\]
Any derivative falling on the $\chi$ produces $\mu^{1/2}$ on a set where $\sqrt\mu|\xi|\sim1$ (the derivatives of $\chi$ are supported on the annulus $|\xi|\sim \mu^{-1/2}$). On this set, we have $ \angles{\xi} \ge |\xi|  \sim \mu^{-1/2}$ and $ \angles{\xi} \le \mu^{-1/2}  \angles{\sqrt\mu\xi} \sim  \mu^{-1/2}$. Thus,   $ \mu^{1/2}\sim\angles{\xi}^{-1}$. 
Together with \eqref{Kdervest} this gives, for both pieces,
\[
  |\partial^\alpha_\xi   m_\bullet |\lesssim\angles{\xi}^{s-|\alpha|}\,\angles{\sqrt\mu\,\xi}^{\frac12}\,\mathbf 1_{\operatorname{supp} m_\bullet}.
\]
On the support of $m_{l}$, we have $\angles{\sqrt\mu\,\xi}\lesssim1$, and so $m_{l}\in\mathbb S^{s}$, whereas on the support of $m_{h}$, we have $ \angles{\sqrt\mu\,\xi} \sim \sqrt\mu \angles{\,\xi}$, and so $m_{h}\in\mathbb S^{s+\frac12}$. Moreover, 
    $$ \mathcal N^{s}(m_{l})\lesssim 1, \qquad \mathcal N^{s+\frac12 }(m_{h})\lesssim \mu^\frac14.$$ 
Then applying Lemma \ref{lm:gen_kato_ponce} to each piece, $m_{l}$ or $m_{h}$, yields
\begin{equation}\label{mlh}
    \begin{split}
  \norm{[m_{l}(D),f]g}_{L^2}&\lesssim\norm{\nabla f}_{L^\infty}\norm{J^{s-1}g}_{L^2}+\norm{J^sf}_{L^2}\norm{g}_{L^\infty},\\
  \norm{[m_{h}(D),f]g}_{L^2}&\lesssim\mu^{\frac14}\Big(\norm{\nabla f}_{L^\infty}\norm{J^{s-\frac12}g}_{L^2}+\norm{J^{s+\frac12}f}_{L^2}\norm{g}_{L^\infty}\Big).
\end{split}
\end{equation}
Next, using the fact that $J_{\mu} (\xi)\ge1$ and  $\mu^{1/4}\angles{\xi}^{1/2}\le\angles{\sqrt\mu\,\xi}^{1/2}=J_{\mu}^{1/2}$, and then applying Plancherel, 
we obtain
\begin{equation}\label{JJmu}
    \begin{split}
    \norm{J^{s-1}g}_{L^2}\le\norm{J_{\mu}^{1/2} J^{s-1}g}_{L^2},
    \\\quad \mu^{1/4}\norm{J^{s-\frac12}g}_{L^2}\lesssim\norm{J_\mu^{1/2} J ^{s-1}g}_{L^2}, 
    \\
    \quad \mu^{1/4}\norm{J^{s+\frac12}f}_{L^2}\lesssim\norm{ J_\mu^{1/2}J^{s} f}_{L^2}.
\end{split}
\end{equation}
Finally, since $\mathcal  K_{\mu,\beta}^\frac12 J^s=m_l + m_h $, we can combine \eqref{mlh}
 with \eqref{JJmu} to obtain the 
desired estimate \eqref{commuest_K}.

\end{proof}

   \begin{lemma} If $0<s<r$,  then
   
\begin{equation}\label{BH}
\|fg\|_{\dot B^{\,s}_{1,1}}
\lesssim
\|f\|_{H^{r}} \|g\|_{H^{r}}
\end{equation}
for all $f, g\in H^r(\R^d)$.

\end{lemma}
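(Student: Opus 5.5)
We prove \eqref{BH} by a Littlewood--Paley paraproduct decomposition of $fg$, measuring each dyadic block of the product in $L^1_x$. The basic tool is that $\|P_\lambda F\|_{L^1}\lesssim\|F\|_{L^1}$ uniformly in $\lambda\in 2^{\Z}$, since the kernel of $P_\lambda$ is an $L^1$-normalized dilate of a Schwartz function. We also use H\"older's inequality, $\|uv\|_{L^1}\le\|u\|_{L^2}\|v\|_{L^2}$. Write
\[
fg=\sum_{\lambda_1,\lambda_2\in 2^{\Z}} f_{\lambda_1}g_{\lambda_2},
\]
so that
\[
\|fg\|_{\dot B^s_{1,1}}\lesssim \sum_{\lambda}\lambda^s\sum_{\lambda_1,\lambda_2}\|P_\lambda(f_{\lambda_1}g_{\lambda_2})\|_{L^1}.
\]
The Fourier support of $f_{\lambda_1}g_{\lambda_2}$ lies in $|\xi|\le 2(\lambda_1+\lambda_2)$. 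Hence $P_\lambda(f_{\lambda_1}g_{\lambda_2})=0$ unless $\lambda\lesssim\max(\lambda_1,\lambda_2)$. Moreover, when $\lambda_1\ll\lambda_2$ or $\lambda_2\ll\lambda_1$, the product is supported where $|\xi|\sim\max(\lambda_1,\lambda_2)$, which forces $\lambda\sim\max(\lambda_1,\lambda_2)$.

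By symmetry it suffices to treat the region $\lambda_1\lesssim\lambda_2$, where $\lambda\lesssim\lambda_2$ always holds. In this region we bound
\[
\lambda^s\|P_\lambda(f_{\lambda_1}g_{\lambda_2})\|_{L^1}\lesssim \lambda^s\|f_{\lambda_1}\|_{L^2}\|g_{\lambda_2}\|_{L^2}.
\]
For fixed $\lambda_1,\lambda_2$ we then sum over $\lambda\lesssim\lambda_2$. This is a geometric series because $s>0$, and it gives $\sum_{\lambda\lesssim\lambda_2}\lambda^s\lesssim\lambda_2^s$. The positivity of $s$ is used exactly here: it makes the low output frequencies harmless, in particular in the high--high interaction. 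We are therefore reduced to showing
\[
\sum_{\lambda_1\lesssim\lambda_2}\lambda_2^s\|f_{\lambda_1}\|_{L^2}\|g_{\lambda_2}\|_{L^2}\lesssim\|f\|_{H^r}\|g\|_{H^r}.
\]

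The main obstacle is that $f$ and $g$ are only controlled in $\ell^2$ of dyadic blocks, while the sum above is an $\ell^1$-type sum, including over low frequencies $\lambda_1,\lambda_2<1$. This is handled by the gap $r>s$ and by the inhomogeneous weights.

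For the high frequency $\lambda_2$ we use $\lambda_2^s\le\langle\lambda_2\rangle^{s}$, which is valid since $s>0$. Then $\lambda_2^s\|g_{\lambda_2}\|_{L^2}\le\langle\lambda_2\rangle^{s-r}\,\langle\lambda_2\rangle^r\|g_{\lambda_2}\|_{L^2}$.

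For the low frequency we sum $\sum_{\lambda_1\lesssim\lambda_2}\|f_{\lambda_1}\|_{L^2}$. The blocks with $\lambda_1\ge1$ contribute at most $\|f\|_{H^r}$ by Cauchy--Schwarz, using $r>0$. For the blocks with $\lambda_1<1$ we cannot sum $\|f_{\lambda_1}\|_{L^2}$ over infinitely many dyadic scales in $\ell^1$. Instead we regroup $\sum_{\lambda_1<1}f_{\lambda_1}=P_{\le 1}f$ before taking norms. Together with the factor $\lambda^s$, which vanishes as $\lambda\to0$, this gives $\|P_{\le1}f\|_{L^2}\le\|f\|_{L^2}$. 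Equivalently, one can split the product as $P_{\le1}f\cdot g+P_{>1}f\cdot g$ at the outset and treat the low part as a single block.

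Likewise, over $\lambda_2$ we write $\sum_{\lambda_2}\langle\lambda_2\rangle^{s-r}\langle\lambda_2\rangle^{r}\|g_{\lambda_2}\|_{L^2}$. The part $\lambda_2\ge1$ is bounded by $\|g\|_{H^r}$ by Cauchy--Schwarz, since $\sum_{\lambda_2\ge1}\langle\lambda_2\rangle^{2(s-r)}<\infty$ because $r>s$. For $\lambda_2<1$ both frequencies are $<1$, and the sum $\sum_{\lambda\lesssim1}\lambda^s$ together with $\|P_{\le 2}f\,P_{\le2}g\|_{L^1}\le\|f\|_{L^2}\|g\|_{L^2}$ controls everything.

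Collecting the two symmetric regions gives $\|fg\|_{\dot B^s_{1,1}}\lesssim\|f\|_{H^r}\|g\|_{H^r}$.
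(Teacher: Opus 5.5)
Your argument is sound in substance, but as written it passes through a false intermediate claim. The inequality you say you are ``reduced to'',
\[
\sum_{\lambda_1\lesssim\lambda_2}\lambda_2^s\|f_{\lambda_1}\|_{L^2}\|g_{\lambda_2}\|_{L^2}\lesssim\|f\|_{H^r}\|g\|_{H^r},
\]
fails. Take $\|g_{1}\|_{L^2}\sim1$ and $\|f_{2^{-k}}\|_{L^2}\sim 1/k$ for $k\ge1$; this is compatible with $f\in H^r$, yet the left side contains $\sum_k 1/k=\infty$. Once you have put a norm on each block $f_{\lambda_1}$ with $\lambda_1<1$, no later regrouping can undo it, and the gap $r>s$ does not help there. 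So the splitting you mention as an afterthought has to come first, before the triangle inequality: write $f=P_{\le1}f+P_{>1}f$ and $g=P_{\le1}g+P_{>1}g$. The pieces are then handled as follows.
\begin{enumerate}
\item The low--low product has Fourier support in $|\xi|\lesssim1$. It costs $\sum_{\lambda\lesssim1}\lambda^s\,\|f\|_{L^2}\|g\|_{L^2}$, and this is where $s>0$ enters.
\item Each term $P_{\le1}f\cdot g_{\lambda_2}$ with $\lambda_2>1$ only feeds outputs $\lambda\lesssim\lambda_2$. The total cost is $\|f\|_{L^2}\sum_{\lambda_2>1}\lambda_2^s\|g_{\lambda_2}\|_{L^2}\lesssim\|f\|_{L^2}\|g\|_{H^r}$, and this is where $r>s$ enters.
\item The high--high part is bounded exactly as you do, by $\bigl(\sum_{\lambda_1>1}\|f_{\lambda_1}\|_{L^2}\bigr)\bigl(\sum_{\lambda_2>1}\lambda_2^s\|g_{\lambda_2}\|_{L^2}\bigr)$ plus the symmetric term.
\end{enumerate}
With this reorganization your proof is complete.

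Your route is genuinely different from the paper's. The paper first uses $\sum_\lambda\lambda^s\langle\lambda\rangle^{-r}<\infty$ to get the embedding $\|F\|_{\dot B^s_{1,1}}\lesssim\|J^rF\|_{L^1}$. It then applies the fractional Leibniz rule \eqref{eq:kato_ponce_ineq} with $p_j=2$, which gives $\|J^r(fg)\|_{L^1}\lesssim\|J^rf\|_{L^2}\|g\|_{L^2}+\|f\|_{L^2}\|J^rg\|_{L^2}$.

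That argument is two lines long and gives a slightly stronger, tame bound with only an $L^2$ norm on the rougher factor. You can recover this too, by also regrouping the low--high sums $\sum_{\lambda_1\ll\lambda_2}f_{\lambda_1}$ before taking norms. However, the paper's argument needs Kato--Ponce with target space $L^1$. That lies outside the range $p\in(1,\infty)$ in which the paper states \eqref{eq:kato_ponce_ineq}, and it really rests on the endpoint bilinear-multiplier theory (Grafakos--Oh).

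Your paraproduct argument uses only the uniform $L^1$-boundedness of $P_\lambda$, H\"older and Cauchy--Schwarz. It is therefore self-contained and avoids that endpoint, at the price of the frequency bookkeeping above. It also makes transparent that $s>0$ controls the low output frequencies and $r>s$ makes the high-frequency $\ell^1$ sum converge. In the paper both conditions are hidden in the single sum $\sum_\lambda\lambda^s\langle\lambda\rangle^{-r}$.
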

\begin{proof}
Observe that
\[
\|J^{-r}f\|_{\dot B^{\,s}_{1,1}} \lesssim
\sum_{\lambda}
\lambda^{s} \angles{\lambda}^{-r}
\|f_\lambda\|_{L^1}
\lesssim
\left(
\sum_{\lambda}
\frac{\lambda^s}{\angles{\lambda}^{r}}
\right)
\|f\|_{L^1} \lesssim \|f\|_{L^1}
\]
since the dyadic sum in parentheses is finite, as we see by splitting into
$0<\lambda<1$ and $\lambda\ge1$.
Then using \eqref{eq:kato_ponce_ineq} with $p_j=2$, we obtain
\begin{align*}
\|fg\|_{\dot B^s_{1,1}} \lesssim  \| J^r(fg)\|_{L^1} &\lesssim  \left(  \| J^r f \|_{L^2} \| g \|_{L^2}+ \| f \|_{L^2}\| J^r g \|_{L^2} \right) \\
&\lesssim  \|f\|_{H^r} \|g\|_{H^r}.
\end{align*}
\end{proof}

We also need the following Bernstein inequality, which is valid for $1\le p \le r \le \infty$
(see, for instance, \cite[Appendix A]{Tao}):
\begin{equation}\label{Bern-est1}
 \|  f_{\lambda} \|_{L^r_x}   \lesssim \lambda^{\frac dp -\frac dr} \|f_{\lambda}\|_{L^p_x},
\end{equation}
Moreover, we have for all $s_1, s_2 \in \R$ and $p\ge 1$,
\begin{equation}\label{Bern-est2}
 \| |D|^{s_1} \mathcal K_{\mu, \beta}^{s_2} \mathcal R f_{\lambda} \|_{L^p_x}   \lesssim  \lambda^{s_1} \angles{ \sqrt{\beta \mu}  \lambda}^{s_2}  \|f_{\lambda}\|_{L^p_x}.
\end{equation}

Next, we derive frequency-localized dispersive estimates for linear Whitham-type equation 
\begin{equation}\label{wt-h}
 \begin{split}
     i u_t -  m_{\beta, \mu, d}(D) u =0\\
     u(x,0) = f(x),
 \end{split} 
\end{equation}
for $(x,t) \in \R^d \times \R$, where the Fourier multiplier $m_{\beta, \mu, d}(D)$ has symbols 
\begin{align*}
 m_{\beta, \mu, 1}(\xi)&=\xi \angles{\sqrt{\beta \mu} \xi} \sqrt{\mathcal T _\mu(\xi)}
   \qquad  \qquad   (\xi \in \R),
\\
m_{\beta, \mu,2}(\xi) &=|\xi|  \angles{\sqrt{\beta \mu} \xi} \sqrt{\mathcal T _\mu(\xi)}  \qquad  \qquad   (\xi \in \R^2).
\end{align*}

Now, consider the phase function 
$$m_\beta(r)=r \angles{  \sqrt{ \beta} r}   \sqrt{\mathcal T (r)} $$ where $\mathcal T =\mathcal T_1$.
We have the following estimates for the first and second order derivatives of
$m_\beta$.
\begin{lemma}\label{lm-mderv-est}
Let $\beta =0$ or $\beta>1/3 $. Then for all $ r \ge 0$, we have 
\begin{align}
\label{m-1stderv-est} 
&0<m_{\beta}'(r) \sim \angles{ \sqrt{  \beta} r} \angles{r}^{-1/2}
\\
\label{m-2ndderv-est} 
&\beta_\ast r  \angles{ \sqrt{ \beta }  r }\angles{ r}^{-5/2} \lesssim |m_{\beta} ''(r)| \lesssim   r  \angles{ \sqrt{ \beta }  r }\angles{ r}^{-5/2} ,
\end{align}
for $\beta_\ast:= \min\left\{|3\beta-1|, \   1/2 \right\}, $
 where the hidden constants are independent of $\beta$.

\end{lemma}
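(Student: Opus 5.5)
We write $m_\beta=g\,h$ with
$$g(r):=\sqrt{r\tanh r},\qquad h(r):=\langle\sqrt\beta r\rangle=\sqrt{1+\beta r^2},$$
so that $r\sqrt{\mathcal T(r)}=g(r)$, and we reduce everything to elementary bounds on $g$ and explicit formulas for $h$. The plan has three steps: bounds on $g$, the first derivative \eqref{m-1stderv-est}, and the second derivative \eqref{m-2ndderv-est}.

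\textbf{Step 1: bounds on $g$.} Using $\tanh r=r-r^3/3+O(r^5)$ near $0$ and $\tanh r=1-O(e^{-2r})$ near $\infty$, we first record, for all $r\ge0$,
$$g\sim r\langle r\rangle^{-1/2},\qquad g'\sim\langle r\rangle^{-1/2},\qquad g''<0,\qquad |g''|\sim r\langle r\rangle^{-5/2}.$$
The expansion $g(r)=r-r^3/6+O(r^5)$ gives $g''(r)=-r+O(r^3)$ at the origin. For large $r$ we compare with $\sqrt r$, whose second derivative is $-\tfrac14 r^{-3/2}$. On a compact middle interval the signs and non-vanishing follow by continuity once $g''<0$ is known there. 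That fact can be checked from
$$(g^2)''=2\,\mathrm{sech}^2 r\,(1-r\tanh r),\qquad g''=\frac{2g\,(g^2)''-\big((g^2)'\big)^2}{4g^3},$$
by showing the numerator is negative.

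\textbf{Step 2: the first derivative.} Since $h'=\beta r/h$,
$$m_\beta'=g'h+\frac{\beta r\,g}{h}.$$
Both terms are nonnegative, so $m_\beta'>0$. The lower bound is $m_\beta'\ge g'h\gtrsim\langle\sqrt\beta r\rangle\langle r\rangle^{-1/2}$. For the upper bound, $\beta r/h\le\sqrt\beta\min(\sqrt\beta r,1)\lesssim h/\langle r\rangle$ (split according to $\sqrt\beta r\lessgtr1$ and $r\lessgtr1$). Hence $\beta r g/h\lesssim h\,r\langle r\rangle^{-3/2}\lesssim h\langle r\rangle^{-1/2}$, with constants independent of $\beta$. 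This gives \eqref{m-1stderv-est}.

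\textbf{Step 3: the second derivative.} With $h''=\beta/h^3$ we have
$$m_\beta''=g''h+\frac{2\beta r g'}{h}+\frac{\beta g}{h^3}.$$
The upper bound in \eqref{m-2ndderv-est} follows termwise from Step 1 and the same $\beta r/h$ bounds as in Step 2.

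For $\beta=0$ we have $m_\beta''=g''$, which already satisfies $|g''|\sim r\langle r\rangle^{-5/2}$. For $\beta>1/3$ we multiply by $h^3$ and expand in $\beta$:
$$h^3m_\beta''=A+\beta B+\beta^2C,$$
$$A=g'',\qquad B=2r^2g''+2rg'+g,\qquad C=r^4g''+2r^3g'.$$
We set $\beta=\tfrac13+\delta$ with $\delta>0$ and show two things. First, the critical combination $A+\tfrac13B+\tfrac19C\ge0$, with size $\gtrsim r^5\langle r\rangle^{-5/2}\cdot\langle r\rangle^{-1}$ away from $0$; near $0$ it is $O(r^3)$, since the $r$-terms cancel exactly ($-1+3\cdot\tfrac13=0$). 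Second, the $\delta$-increment
$$\delta\Big(B+\big(\tfrac23+\delta\big)C\Big)$$
is $\gtrsim\delta\,r\langle\sqrt\beta r\rangle^4\langle r\rangle^{-5/2}$. This uses $C\sim r^3\langle r\rangle^{-1/2}>0$ and $B\gtrsim r$ for small $r$ ($B\approx3r$).

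Dividing by $h^3$ then gives $m_\beta''\gtrsim\min(\delta,1)\,r\langle\sqrt\beta r\rangle\langle r\rangle^{-5/2}$. Since $3\delta=3\beta-1$, this is $\gtrsim\beta_\ast\,r\langle\sqrt\beta r\rangle\langle r\rangle^{-5/2}$. For large $\beta$, say $\delta\ge1/6$, the $\beta^2C$ term dominates and the cap $\beta_\ast\le1/2$ absorbs the remaining constants.

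\textbf{Main obstacle.} I expect the hard part to be the sign of the critical combination $A+\tfrac13B+\tfrac19C$ on intermediate $r$, that is, that the $\beta=1/3$ curve has $m''\ge0$. This is a genuine transcendental inequality in $\tanh$. I would first verify it asymptotically at $0$ (next-order Taylor coefficient) and at $\infty$. In between I would attempt a direct sign argument after substituting $g=\sqrt{r\tanh r}$ and clearing denominators, reducing it to positivity of an explicit combination of $r$, $\sinh$ and $\cosh$, provable by comparing power series termwise. This is presumably the content of Lemma \ref{lm:fbeta-threshold} in Section \ref{sec5}.
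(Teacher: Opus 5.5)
\textbf{The decisive step is missing.} Your reduction is sound, and it computes exactly the paper's object. Since $2rg'/g=1+x$ and $-g''/g=\sigma/4$ (with $x,\sigma$ as in Section~\ref{sec5}), the paper's factorization reads $m_\beta''=-g''\angles{\sqrt\beta r}f_\beta$, where $f_\beta=(A+\beta B+\beta^2C)/\bigl(-g''\angles{\sqrt\beta r}^4\bigr)$. The gap is the step you flag yourself: nonnegativity, for all $r>0$, of
\[
A+\tfrac13B+\tfrac19C=\tfrac19(3+r^2)^{3/2}\Bigl(\sqrt{(3+r^2)\,r\tanh r}\Bigr)'' .
\]
Your expansions give $\tfrac29r^3+O(r^5)$ at $0$ and $\sim\tfrac1{12}r^{5/2}$ at infinity. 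Nothing in the proposal controls intermediate $r$, so the lower bound in \eqref{m-2ndderv-est} is not yet proved.

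The paper does not close this either: its claim $f_{1/3}\ge0$ (i.e.\ $d(r)\ge0$) rests on a plotted figure. You still need an actual proof. One option is to clear denominators in $2FF''\ge(F')^2$ for $F=(3+r^2)r\tanh r$ and compare series. Another is validated interval arithmetic on a compact range, joined to quantitative versions of your two asymptotic regimes.

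Granted this inequality, your route is more robust than the paper's. The factor $\beta_\ast\sim\min(\delta,1)$ comes from the explicitly positive increment $\delta\bigl(B+(\tfrac23+\delta)C\bigr)$, so you only need nonnegativity at $\beta=\tfrac13$. The paper instead needs the global minimum of $f_\beta$. Its Taylor expansion at $r=0$ gives at best a local minimizer for $\tfrac13<\beta\le\tfrac12$, and it asserts $f_{1/2}\ge\tfrac12$ without proof.

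Your auxiliary facts ($g''<0$, $B>0$, $C>0$ with the stated sizes) are routine. Note that with $u=r\tanh r$ the correct formula is $g''=\bigl(2uu''-(u')^2\bigr)/(4u^{3/2})$; yours has $2g$ where $2g^2$ belongs. AM--GM then gives $(u')^2\ge4u\,\mathrm{sech}^2r>2uu''$, hence $g''<0$.

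\textbf{The upper bound fails for unbounded $\beta$.} The upper bound in \eqref{m-2ndderv-est} is false with $\beta$-independent constants, so your termwise argument cannot succeed. Indeed $m_\beta''(r)/r\to3\beta-1$ as $r\to0$, while the right-hand side is $\approx r$. Concretely, bounding $2\beta rg'/h$ (or $\beta g/h^3$) by $r\,h\angles{r}^{-5/2}$ forces $\beta\lesssim1$ at $r=0$. The paper shares this defect: it asserts $|f_\beta|\le c_2$ for all $\beta\ge0$, although $f_\beta(0)=3\beta-1$. The statement must be restricted to $\beta\le\beta_0$, with constants depending on $\beta_0$. That is all Lemma~\ref{lm-dispstrz-est} uses, since it assumes $\beta\le1$.

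\textbf{Two related slips on uniformity in $\beta$.}
\begin{itemize}
\item In Step~2, $\beta r/h\lesssim h/\angles{r}$ fails for $\beta\gg1$ and $\beta^{-1/2}\le r\ll1$. The conclusion $\beta rg/h\lesssim h\angles{r}^{-1/2}$ still holds uniformly, because $g\le\min(r,\sqrt r)$ and $\beta r^2\le h^2$.
\item In Step~3, the increment is $\gtrsim\min(\delta,1)\,r\,h^4\angles{r}^{-5/2}$, not $\gtrsim\delta\,r\,h^4\angles{r}^{-5/2}$. The former is what you actually use.
\end{itemize}
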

The estimates in Lemma \ref{lm-mderv-est} were derived for $\beta \in \{0, 1\}$ in  \cite {PSST21} (see also \cite{DST20, DDT22-2}). However, the lower bound estimate in \eqref{m-2ndderv-est} is new. These estimates combined with stationary phase analysis yield the dispersive estimates in Lemma \ref{lm-dispstrz-est} below..

The solution to \eqref{wt-h}, which is given by, 
\begin{equation}\label{soln-gp}
     u(t)=S_{ \beta,  \mu, d}(t) f = e^{-i t m_{\beta, \mu, d}(D)} f,
\end{equation}
satisfies the following dispersive and Strichartz estimates
for frequency localized function $f$.

\begin{lemma}[Localised dispersive and Strichartz estimates ] \label{lm-dispstrz-est}
Let $\mu \in (0,1]$, $\beta=0$ or $1/3<\beta \le 1$ and $\beta_\ast=3\beta-1$. Then we have the following:

\begin{enumerate}[(a)]
    \item In one dimension, we have the dispersive estimate
\begin{align}
    \label{disp-est-1d}
\| \mathcal S_{ \beta, \mu, 1} (t)  P_\lambda f  \|_{L^\infty_x(\R)}
 \lesssim (\beta_\ast \mu)^{- \frac 12} \lambda^{ -\frac 12} \angles{ \sqrt{\beta \mu }  \lambda}^{-\frac{1}{2}}   \angles{ \sqrt{\mu}  \lambda}^{\frac{5}{4 }}  |t|^{-\frac 12 }  \| f\|_{L_x^1(\R)} .
\end{align}

\vspace{2mm}
Moreover, we have the Strichartz estimate
\begin{align}  \label{str-est-1d}
\norm{ \mathcal S_{\beta, \mu, 1} ( t)  P_\lambda f }_{ L^{4}_{t} L^{\infty }_{ x} (\R^{1+1}) } \lesssim  (\beta_\ast \mu)^{- \frac 14} \lambda^{ -\frac 14} \angles{ \sqrt{\beta \mu }  \lambda}^{-\frac{1}{4}}   \angles{ \sqrt{\mu}  \lambda}^{\frac{5}{8 }}
\norm{  f}_{ L^2_{ x}(\R )} .
\end{align}
\vspace{2mm}
\item In two dimensions, we have the dispersive estimate
\begin{align}
 \label{disp-est-2d}
\| \mathcal S_{ \beta, \mu, 2} (t)  P_\lambda f  \|_{L^\infty_x(\R^2)}
 \lesssim    (\beta_\ast \mu)^{- \frac {1-\theta}2} \lambda^{ 2\theta} \angles{ \sqrt{\beta \mu }  \lambda}^{-1+\theta}   \angles{ \sqrt{\mu}  \lambda}^{\frac{3}{2 }(1-\theta)}  |t|^{-1+\theta }  \| f\|_{L_x^1(\R^2)} 
\end{align}
for $0\le \theta \le 1$.

\vspace{2mm}
Moreover, we have the Strichartz estimate
\begin{align}
 \label{str-est-2d}
\norm{ \mathcal S_{\beta, \mu, 2} ( t) P_\lambda f}_{ L^{\frac 2{1- \delta}}_t L^{\frac 2\delta}_x ( \R^{2+1}) } \lesssim  (\beta_\ast \mu)^{- \frac {1-\delta}4} \angles{ \sqrt{\beta \mu }  \lambda}^{-\frac 12 + \frac \delta 2}   \angles{ \sqrt{\mu}  \lambda}^{\frac{3}{4 }(1-\delta)} 
\norm{  f}_{ L^2_{ x}(\R^2 )} ,
\end{align}
for sufficiently small $\delta > 0$.

\end{enumerate}

\end{lemma}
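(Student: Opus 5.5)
We reduce to $\mu=1$ by scaling and then apply stationary phase with the bounds of Lemma \ref{lm-mderv-est}.

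\textbf{Scaling.} The symbol satisfies $m_{\beta,\mu,d}(\xi)=\mu^{-1/2} m_\beta(\sqrt\mu|\xi|)$, up to the sign of $\xi$ in one dimension. Setting $x=\sqrt\mu\, y$ and $t=\sqrt\mu\, \tau$, we get
$$\mathcal S_{\beta,\mu,d}(t)P_\lambda f(x)=\big[\mathcal S_{\beta,1,d}(\tau)P_{\sqrt\mu\lambda}\tilde f\big](y),\qquad \tilde f(y)=f(\sqrt\mu y),$$
with $\|\tilde f\|_{L^1}=\mu^{-d/2}\|f\|_{L^1}$. So it suffices to prove the $\mu=1$ estimates at frequency $\nu=\sqrt\mu\lambda$ and then undo the scaling. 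In one dimension the kernel bound carries $\tau^{-1/2}=\mu^{1/4}t^{-1/2}$ and $\nu^{-1/2}$, and the $L^1$ normalization gives a further $\mu^{-1/2}$; these combine into the stated powers of $\mu$. Throughout we treat $\beta\le1$, so $\langle\sqrt\beta r\rangle\sim_{\beta}\langle r\rangle$ only when $\beta$ is away from $0$. We keep both brackets and use only the two-sided bounds of Lemma \ref{lm-mderv-est}.

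\textbf{Dispersive estimate in one dimension.} Write $P_\nu f=K_\nu(\tau)*f$ with
$$K_\nu(\tau,y)=\int e^{i(y\xi-\tau m_\beta(\xi))}\rho_\nu(\xi)\,d\xi .$$
The phase has second derivative $\tau m_\beta''$, and by \eqref{m-2ndderv-est} we have $|m_\beta''(\xi)|\gtrsim \beta_\ast \nu\langle\sqrt\beta\nu\rangle\langle\nu\rangle^{-5/2}$ on the support. Van der Corput's lemma then gives
$$|K_\nu|\lesssim \big(|\tau|\beta_\ast\nu\langle\sqrt\beta\nu\rangle\langle\nu\rangle^{-5/2}\big)^{-1/2}\big(\|\rho_\nu\|_{L^\infty}+\|\rho_\nu'\|_{L^1}\big).$$
This is exactly the $\nu^{-1/2}\langle\sqrt\beta\nu\rangle^{-1/2}\langle\nu\rangle^{5/4}\beta_\ast^{-1/2}|\tau|^{-1/2}$ form; $\langle\sqrt{\beta\mu}\lambda\rangle$ and $\langle\sqrt\mu\lambda\rangle$ appear after rescaling. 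The first-derivative bound \eqref{m-1stderv-est} is not needed here.

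\textbf{Strichartz estimate in one dimension.} This follows from the dispersive estimate, unitarity on $L^2$, and the standard $TT^*$ argument with Hardy–Littlewood–Sobolev in time. For the pair $(4,\infty)$ we have $\sigma=1/2$, so $2/q=\sigma$ is admissible. The constant is the square root of the dispersive constant, which gives \eqref{str-est-1d}. Applying $TT^*$ to $\widetilde P_\lambda$ and using $P_\lambda=\widetilde P_\lambda P_\lambda$ handles the localization.

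\textbf{Two dimensions.} The symbol is radial, so we write the kernel as
$$K_\nu(\tau,y)=\int_0^\infty e^{-i\tau m_\beta(r)}\rho_\nu(r)\,\hat\sigma(ry)\,r\,dr,$$
where $\hat\sigma$ is the Fourier transform of the circle measure. We separate $|y|\nu\lesssim1$ from $|y|\nu\gg1$. In the latter region we use the asymptotics $\hat\sigma(z)=\sum_\pm e^{\pm i|z|}a_\pm(z)$ with $|\partial^k a_\pm|\lesssim|z|^{-1/2-k}$. This gives one-dimensional oscillatory integrals with phases $\pm r|y|-\tau m_\beta(r)$. Applying van der Corput with \eqref{m-2ndderv-est} to these gives
$$|K_\nu|\lesssim \nu^{2}\,(\nu|y|)^{-1/2}\big(|\tau|\,\beta_\ast\nu^{-1}\langle\sqrt\beta\nu\rangle\langle\nu\rangle^{-5/2}\cdot\nu^2\big)^{-1/2}\cdot\nu^{-1}.$$
The factor $(\nu|y|)^{-1/2}$ is then converted into time decay by treating the non-stationary region separately. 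When $|y|\not\sim|\tau|m_\beta'(\nu)$, we integrate by parts using \eqref{m-1stderv-est}. On the stationary region $|y|\sim|\tau| m'_\beta(\nu)\sim|\tau|\langle\sqrt\beta\nu\rangle\langle\nu\rangle^{-1/2}$, so $(\nu|y|)^{-1/2}$ supplies a further $|\tau|^{-1/2}$. This yields the $\theta=0$ bound with $|\tau|^{-1}$, $\beta_\ast^{-1/2}$, $\langle\sqrt\beta\nu\rangle^{-1}$ and $\langle\nu\rangle^{3/2}$. The trivial bound $|K_\nu|\lesssim\nu^2$ covers $\theta=1$, and interpolating between the two gives \eqref{disp-est-2d}.

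\textbf{Strichartz estimate in two dimensions.} Take $\theta=\delta$ and interpolate with $L^2$ conservation to obtain the $L^{2/(2-\delta)}\to L^{2/\delta}$ decay $|t|^{-(1-\delta)^2}$ (approximately). Then run Keel–Tao/$TT^*$ off the endpoint with $q=2/(1-\delta)$, which gives \eqref{str-est-2d}. The powers of $\lambda$ balance through Bernstein \eqref{Bern-est1}.

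\textbf{Main obstacle.} The hardest part will be the two-dimensional stationary-phase bookkeeping. One must check that $\beta_\ast$ enters only through the square root of \eqref{m-2ndderv-est}, and that the Bessel factor combined with \eqref{m-1stderv-est} produces exactly the powers $\langle\sqrt{\beta\mu}\lambda\rangle^{-1}\langle\sqrt\mu\lambda\rangle^{3/2}$, uniformly in $\beta\in(1/3,1]$.
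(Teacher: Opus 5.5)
Your route is essentially the paper's: rescale to $\mu=1$ via \eqref{smu-scaling}, redo the stationary-phase argument (which the paper imports from the literature) using Lemma \ref{lm-mderv-est}, interpolate the 2D bound with the trivial $\lambda^2$ bound, and get Strichartz by $TT^\ast$. Your scaling bookkeeping, the 1D dispersive and Strichartz estimates, and the $\theta=0$ bound in 2D all check out. You separate off the region $\nu|y|\lesssim1$ but never treat it. There the phase is $\tau m_\beta(r)$ alone, and one integration by parts, using \eqref{m-1stderv-est} and the upper bound in \eqref{m-2ndderv-est}, gives $\nu^2(\nu|\tau| m_\beta'(\nu))^{-1}$, which is below the target.

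The step that fails as written is the 2D Strichartz estimate. Start from \eqref{disp-est-2d} with $\theta=\delta$ and interpolate with unitarity to $L^{r'}\to L^r$, $r=2/\delta$. This gives decay $|t|^{-(1-\delta)^2}$ and a constant containing $\lambda^{2\delta(1-\delta)}$. Hardy--Littlewood--Sobolev in the $TT^\ast$ argument then forces $q=2/(1-\delta)^2$ instead of $2/(1-\delta)$. It also leaves a factor $\lambda^{\delta(1-\delta)}$ that Bernstein \eqref{Bern-est1} cannot absorb, since Bernstein only produces positive powers of $\lambda$ when raising integrability. But \eqref{str-est-2d} has no power of $\lambda$ at all.

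The right choice is $\theta=0$. Set $A:=(\beta_\ast\mu)^{-1/2}\angles{\sqrt{\beta\mu}\lambda}^{-1}\angles{\sqrt\mu\lambda}^{3/2}$. Riesz--Thorin gives $\|\mathcal S_{\beta,\mu,2}(t)\widetilde P_\lambda\|_{L^{r'}\to L^r}\lesssim A^{1-\delta}|t|^{-(1-\delta)}$. Then $TT^\ast$ with Hardy--Littlewood--Sobolev ($2/q=1-\delta$) yields exactly the constant $A^{(1-\delta)/2}$ of \eqref{str-est-2d}.

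Running your $\theta=\delta$ estimate through $TT^\ast$ directly at $r=\infty$ gives an $L^{2/(1-\delta)}_tL^\infty_x$ bound with a $\lambda^\delta$ loss. That is what Lemma \ref{lm-qdest} ultimately uses after Bernstein, but it is not the estimate stated here.
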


\begin{proof}
The Strichartz estimates \eqref{str-est-1d} and  \eqref{str-est-2d} follow from the dispersive estimates \eqref{disp-est-1d} and \eqref{disp-est-2d}, respectively, upon application of a classical duality argument (see e.g.,  \cite{DDT22-2} for the argument).
 It suffices to prove the dispersive estimates \eqref{disp-est-1d} and \eqref{disp-est-2d} for $\mu=1$. Indeed, if \eqref{disp-est-1d} holds for $\mu=1$, i.e., 
\begin{align}
    \label{disp-estm1-1d}
\| \mathcal S_{ \beta, 1, 1} (t)  P_\lambda f  \|_{L^\infty_x(\R)}
 \lesssim \beta_\ast ^{- \frac 12} \lambda^{ -\frac 12} \angles{ \sqrt{\beta }  \lambda}^{-\frac{1}{2}}   \angles{   \lambda}^{\frac{5}{4 }}    |t|^{-\frac 12 }  \| f\|_{L_x^1(\R)},
\end{align}
then we can use
the rescaling property,
    \begin{equation}\label{smu-scaling}
        \left(    \mathcal S_{ \beta, \mu, d} (t)  P_\lambda f  \right)(x)= \left(    \mathcal S_{ \beta, 1, d} \left( \mu^{-1/2}t\right) P_{ \sqrt \mu \lambda }f (\sqrt \mu \cdot) \right) \left( \mu^{-1/2}x\right),
        \end{equation}
to obtain
\begin{align*}
\| \mathcal S_{ \beta, \mu, 1} (t)  P_\lambda f  \|_{L^\infty_x(\R)}
& \lesssim    \| \mathcal S_{ \beta, 1, 1} \left( \mu^{-1/2}t\right)  P_{ \sqrt \mu \lambda }f (\sqrt \mu \cdot)  \|_{L^\infty_x(\R)}
\\
& \lesssim   \beta_\ast ^{- \frac 12}  (\sqrt{\mu } \lambda)^{ -\frac 12} \angles{ \sqrt{\beta \mu }  \lambda}^{-\frac{1}{2}}   \angles{  \sqrt{ \mu }  \lambda}^{\frac{5}{4 }}     \left| \mu^{-1/2}t\right|^{-\frac 12}    \|  f(\sqrt \mu \cdot)\|_{L_x^1(\R)} 
\\
& ´=   (\beta_\ast \mu)^{- \frac 12} \lambda^{ -\frac 12} \angles{ \sqrt{\beta \mu }  \lambda}^{-\frac{1}{2}}   \angles{ \sqrt{\mu}  \lambda}^{\frac{5}{4 }}   \left| t\right|^{-\frac 12}    \|  \|  f\|_{L_x^1(\R)} .
 \end{align*}

Similarly, by the rescaling property \eqref{smu-scaling}, estimate \eqref{disp-est-2d} reduces to 
\begin{align}
    \label{disp-estm1-2d}
\| \mathcal S_{ \beta, 1, 2} (t)  P_\lambda f  \|_{L^\infty_x(\R^2)}
 \lesssim \beta_\ast ^{- \frac 12} \lambda^{ 2\theta} \angles{ \sqrt{\beta  }  \lambda}^{-1+\theta}   \angles{  \lambda}^{\frac{3}{2 }(1-\theta)}  |t|^{-1+\theta }  \| f\|_{L_x^1(\R^2)}.
\end{align}

It remains to prove \eqref{disp-estm1-1d} and \eqref{disp-estm1-2d}. These estimates are proved in \cite[see Theorem 1]{DDT22-2} for $\beta \in \{0, 1\}$ (to be precise \eqref{disp-estm1-2d} is proved for $\theta=0$ but 
we can use interpolation\footnote{ The estimate \eqref{disp-estm1-2d} for $0\le \theta \le 1$ can be obtained by interpolating the estimate for $\theta=0$ and the trivial estimate 
\begin{align*}
\| \mathcal S_{ \beta, 1, 2} (t)  P_\lambda f  \|_{L^\infty_x(\R^2)}
 \lesssim \lambda^{ 2}  \| f\|_{L_x^1(\R^2)}.
\end{align*}
}  to obtain the estimate for $0\le \theta \le 1$).
Finally, one can follow the argument in the proof of \cite[see Theorem 1]{DDT22-2}, applying Lemma \ref{lm-mderv-est}, to obtain \eqref{disp-estm1-1d} and \eqref{disp-estm1-2d} for $\beta=0$ or $\beta > 1/3$.

\end{proof}

\begin{remark}
    The estimate in \eqref{disp-est-1d}--\eqref{str-est-2d} still hold if we replace $P_\lambda  $ by $\widetilde P_\lambda=P_{\lambda/2}+P_\lambda+P_{2\lambda}$.
Now, observe that $P_\lambda =\widetilde P_\lambda P_\lambda$. Consequently, by \eqref{disp-est-1d},
\begin{align*}
\| \mathcal S_{ \beta, \mu, 1} (t)  P_\lambda f  \|_{L^\infty_x(\R)} =\| \mathcal S_{ \beta, \mu, 1} (t)   \widetilde P_\lambda  f_\lambda   \|_{L^\infty_x(\R)}
 \lesssim   a_{\beta, \mu}(\lambda)   \left| t\right|^{-\frac 12 }  \|  f_\lambda\|_{L_x^1(\R)} .
 \end{align*}
 which is \eqref{str-est-1d} with $f$ on the right replaced by $f_\lambda$. Similarly, we can replace $f$ on the right hand side of \eqref{str-est-1d}--\eqref{str-est-2d} by $f_\lambda$.
\end{remark}

\section{A prior  estimates for solutions of \eqref{wt} \& Existence theory  }

We define the modified energy associated with the Whitham equation \eqref{wt} by
\begin{align*}
    \mathcal{E}_s(\eta(t)) = \int_\R \Big(J^s \eta \Big)^2 \d x =\norm{\eta (t)}^2_{H^s},
\end{align*}
which is conserved for $s=0$, i.e.,  $ \mathcal{E}_0(\eta(t)) = \mathcal{E}_0(\eta_0) $.

\subsection{Energy estimates}
 Differentiating $ \mathcal{E}_s(\eta(t)) $ with respect to $t$,   and then using \eqref{wt} and integration by parts,  we obtain
    \begin{align*}
         \frac{1}{2}\frac{d}{d t} \mathcal{E}_s (\eta(t))  &= \int_\R J^s \eta J^s \eta_t \, dx \\
         &= - \int_\R  J^s \eta J^s    \mathcal{K}_{\mu, \beta} \eta_x  \, dx  - \epsilon \int_\R  J^s \eta J^s (\eta\eta _x ) \, dx
         \\
         &=- \frac12  \int_\R   \partial_x \left[ \sqrt{ \mathcal{K}_{\mu, \beta}} J^s  \eta    \right]^2  \, dx  - \epsilon \int_\R  J^s \eta [J^s,  \eta] \eta _x  \, dx+  \frac \epsilon 2  \int_\R  \eta_x \left(J^s  \eta    \right)^2  \, dx .
    \end{align*}
The first term on the right is zero, whereas we can use H\"{o}lder's inequality and the Kato-Ponce commutator estimate \eqref{kato_ponce} to estimate the second and third terms, and obtain
    \begin{align*}
       \frac{d}{d t} \norm{\eta (t)}^2_{H^s}
        &\le c \epsilon \norm{\eta_x(t) }_{L^\infty_x} \norm{\eta (t)}^2_{H^s}.
    \end{align*}
 It follows that
     \begin{align*}
       \frac{d}{d t} \norm{\eta (t)}_{H^s}
        &\le c \epsilon \norm{\eta_x(t) }_{L^\infty_x} \norm{\eta (t)}_{H^s}
    \end{align*}
which yields,  applying Gr\"onwall's lemma,
\begin{equation}\label{eta-grnw}
\norm{ \eta(t)}_{H^s}   \le  \exp\left( c   \epsilon   \int_0^t \norm{\eta_x (\tau)}_{L^\infty_x} \, d\tau \right) \norm{ \eta_0}_{H^s} .
    \end{equation}

\subsection{A prior estimate for $ \int_0^T  \norm{\eta_x (t)}_{L^\infty_x} \, dt$}

We can now use the dispersive property of of solution to \eqref{wt} in Lemma \ref{lm-dispstrz-est}
 to obtain the following bound for $ \int_0^T  \norm{\eta_x (t)}_{L^\infty_x} \, dt$.

\begin{lemma}\label{lm-etax}
     Let $\mu, \epsilon \in (0,1]$,  $\beta=0$ or  $\beta >1/3 $,  and  $s>s_\beta$ with $s_\beta=11/4$
    if $\beta=0$ and $s_\beta=9/4$ if $\beta>1/3$.
      For a solution $\eta \in C([0,T]; H^{s}(\R))$ of \eqref{wt}--\eqref{wt-data}, we have the a prior estimate
   \begin{align}
       \label{etax-est}
    \int_0^T  \norm{\eta_x (t)}_{L^\infty_x} \, dt  &\lesssim  (\beta_\ast \mu)^{- \frac 1 4}  T^{ \frac34}  \| \eta_0\|_{ H^{s }}+ \epsilon (\beta_\ast \mu)^{- \frac 1 2}   T^{ \frac 32}    \| \eta \|^2_{ L^\infty_T H^{s }  }.
    \end{align}
    
    \end{lemma}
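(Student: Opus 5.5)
Decompose $\eta$ dyadically in frequency and apply the Duhamel formula to each piece. First rewrite \eqref{wt} in the form of \eqref{wt-h}. In one dimension $m_{\beta,\mu,1}(\xi)=\xi\,\mathcal K_{\mu,\beta}^{1/2}(\xi)$ up to the factor $\angles{\sqrt{\beta\mu}\xi}\sqrt{\mathcal T_\mu}$, whereas the linear part of \eqref{wt} is $i\xi\mathcal K_{\mu,\beta}(\xi)$. The associated phase $\xi\mathcal K_{\mu,\beta}(\xi)=\xi\angles{\sqrt{\beta\mu}\xi}^2\mathcal T_\mu(\xi)$ has the same structure. I would therefore use the dispersive bound for $e^{-t\mathcal K_{\mu,\beta}(D)\partial_x}$, obtained exactly as in Lemma \ref{lm-dispstrz-est} via stationary phase with Lemma \ref{lm-mderv-est} applied to $r\mathcal K_{1,\beta}(r)$. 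This gives
\[
\norm{S(t)P_\lambda f}_{L^\infty_x}\lesssim a(\lambda)|t|^{-1/2}\norm{f_\lambda}_{L^1_x},\qquad a(\lambda)=(\beta_\ast\mu)^{-1/2}\lambda^{-1/2}\angles{\sqrt{\beta\mu}\lambda}^{-\sigma}\angles{\sqrt\mu\lambda}^{\kappa},
\]
together with the corresponding $L^4_tL^\infty_x$ bound with $a(\lambda)^{1/2}$. The exponents are those of \eqref{disp-est-1d} adapted to this symbol. Writing $\eta_\lambda(t)=S(t)P_\lambda\eta_0-\frac\epsilon2\int_0^tS(t-\tau)P_\lambda\partial_x(\eta^2)(\tau)\,d\tau$, I bound $\norm{\eta_x}_{L^1_TL^\infty_x}\le\sum_\lambda\lambda\norm{\eta_\lambda}_{L^1_TL^\infty_x}$.

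\textbf{Homogeneous part.} By Hölder in time and the Strichartz estimate,
\[
\lambda\norm{S(t)P_\lambda\eta_0}_{L^1_TL^\infty_x}\le T^{3/4}\lambda\norm{S(t)P_\lambda\eta_0}_{L^4_TL^\infty_x}\lesssim T^{3/4}(\beta_\ast\mu)^{-1/4}\lambda^{3/4}\angles{\sqrt{\beta\mu}\lambda}^{-\sigma/2}\angles{\sqrt\mu\lambda}^{\kappa/2}\norm{P_\lambda\eta_0}_{L^2}.
\]
Since $\mu\le1$, the $\mu$-dependent brackets are bounded by powers of $\angles{\lambda}$. Summing over $\lambda$ with Cauchy--Schwarz then gives $(\beta_\ast\mu)^{-1/4}T^{3/4}\norm{\eta_0}_{H^s}$ provided $s$ exceeds the resulting total power of $\lambda$ at high frequency. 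At low frequency the sum converges because of the factor $\lambda^{3/4}$.

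\textbf{Inhomogeneous part.} Put the $L^\infty_x$ norm inside the time integral and apply the dispersive estimate:
\[
\lambda\Bignorm{\int_0^tS(t-\tau)P_\lambda\partial_x(\eta^2)\,d\tau}_{L^\infty_x}\lesssim a(\lambda)\lambda^2\int_0^t|t-\tau|^{-1/2}\norm{P_\lambda(\eta^2)(\tau)}_{L^1_x}\,d\tau .
\]
Integrating in $t\in[0,T]$ produces $\int_0^T\int_0^t|t-\tau|^{-1/2}\,d\tau\,dt\lesssim T^{3/2}$. Summing in $\lambda$ gives a $\dot B^{\,r}_{1,1}$-type norm of $\eta^2$, with $r$ the effective high-frequency power; at low frequency the weight is $\lambda^{3/2}$, which is harmless. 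By the product lemma \eqref{BH}, this is bounded by $\norm{\eta}_{L^\infty_TH^s}^2$ as soon as $s>r$. This yields the second term $\epsilon(\beta_\ast\mu)^{-1/2}T^{3/2}\norm{\eta}^2_{L^\infty_TH^s}$.

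\textbf{Main obstacle.} The main obstacle is the bookkeeping of the high-frequency exponents, which determines the thresholds $s_\beta$. For $\beta>1/3$, the factor $\angles{\sqrt{\beta\mu}\lambda}^{-1/2}$ gains half a derivative. For $\beta=0$ there is no such gain, and $\angles{\sqrt\mu\lambda}^{5/4}$ costs more. In the Duhamel term this gives $\lambda^{3/2}\angles{\lambda}^{5/4}$, i.e.\ the requirement $s>11/4$, and for $\beta>1/3$ the requirement $s>9/4$ after the gain. The remaining delicate point is making sure all $\mu$-dependent brackets are absorbed into $\angles{\lambda}$ without losing the prefactors $(\beta_\ast\mu)^{-1/4}$ and $(\beta_\ast\mu)^{-1/2}$. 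It is also necessary to check that the dispersive estimate is valid for the actual propagator of \eqref{wt}, through the relation between its phase and $m_\beta$.
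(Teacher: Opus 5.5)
Your skeleton is exactly the paper's: Duhamel; H\"older in time plus the $L^4_TL^\infty_x$ Strichartz bound for the free part; the $|t|^{-1/2}$ dispersive bound, $\int_0^T\int_0^t(t-\tau)^{-1/2}\,d\tau\,dt\lesssim T^{3/2}$ and \eqref{BH} for the Duhamel part. The one place where you deviate from it is a genuine gap. You correctly observe that the linear part of \eqref{wt} as displayed has symbol $\xi\mathcal K_{\mu,\beta}(\xi)$, while $m_{\beta,\mu,1}(\xi)=\xi\sqrt{\mathcal K_{\mu,\beta}(\xi)}$. You then switch to $e^{-t\mathcal K_{\mu,\beta}(D)\partial_x}$ and claim that its localized dispersive and Strichartz bounds follow from Lemma \ref{lm-mderv-est} ``applied to $r\mathcal K_{1,\beta}(r)$''. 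That lemma is only about $m_\beta(r)=r\sqrt{\mathcal K_{1,\beta}(r)}$, and the curvature of $\phi_\beta(r):=r\mathcal K_{1,\beta}(r)=(1+\beta r^2)\tanh r$ is genuinely different.

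For $\beta=0$ the phase is $\tanh r$, with $\phi_0''(r)=-2\tanh r\,\mathrm{sech}^2 r\approx-8e^{-2r}$. At frequencies $\lambda\gg\mu^{-1/2}$ there is essentially no dispersion, and the stationary-phase constant grows like $e^{c\sqrt\mu\lambda}$. So no $a(\lambda)$ with polynomial weights exists, the dyadic sums cannot be closed in any $H^s$, and the $\beta=0$ case of your argument collapses. For $\beta>1/3$ you would need a new curvature lemma: here $\phi_\beta''(r)=2(3\beta-1)r+O(r^3)$ near $0$ and $\phi_\beta''\to2\beta$ at infinity, versus $m_\beta''\sim r^{-1/2}$. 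This gives weights different from those in \eqref{disp-est-1d}. Yet your last paragraph derives $11/4$ and $9/4$ from the weights $\angles{\sqrt{\beta\mu}\lambda}^{-1/2}\angles{\sqrt\mu\lambda}^{5/4}$ of $m_{\beta,\mu,1}$, i.e.\ of the propagator you discarded. The proposal is therefore internally inconsistent, and for $\beta=0$ it cannot be repaired along these lines.

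The paper makes no such switch. It writes \eqref{wt} directly as $i\eta_t-m_{\mu,\beta,1}(D)\eta=-i\epsilon\eta\eta_x$, i.e.\ it takes the linear dispersion to be the classical Whitham one, $\xi\sqrt{\mathcal K_{\mu,\beta}(\xi)}$. It asserts this identification without comment, and your observation is precisely what calls it into question. It then inserts \eqref{str-est-1d} and \eqref{disp-est-1d} as they stand:
\begin{itemize}
\item $\lambda^{3/4}\angles{\sqrt{\beta\mu}\lambda}^{-1/4}\angles{\sqrt\mu\lambda}^{5/8}\lesssim\lambda^{3/4}\angles{\lambda}^{3/8}$ for the free part;
\item $\lambda^{3/2}\angles{\sqrt{\beta\mu}\lambda}^{-1/2}\angles{\sqrt\mu\lambda}^{5/4}\lesssim\lambda^{3/2}+\lambda^{9/4}$ (resp.\ $\lambda^{3/2}+\lambda^{11/4}$ for $\beta=0$) for the Duhamel part, followed by \eqref{BH}.
\end{itemize}
The stated thresholds $s_\beta$, and the inclusion of $\beta=0$ at all, hinge on that identification. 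To prove the lemma as stated, work with $S_{\beta,\mu,1}$ throughout, as the paper does. With the literal symbol $\xi\mathcal K_{\mu,\beta}$ you could at best handle $\beta>1/3$, and only after proving a separate lower bound for $\phi_\beta''$.
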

\begin{proof}  Equation \eqref{wt} can be written as 
$$i \eta_t -  m_{\mu, \beta, 1}(D) \eta =-i\epsilon\eta \eta_x, $$ and so the Duhamel formulation for
\eqref{wt}--\eqref{wt-data} becomes
\begin{align*}
    \eta(t) & = S_{\mu, \beta,  1}( t) \eta_0  - \epsilon \int_0^t  S_{\mu, \beta,  1}(t-\tau)  (\eta \eta_x)(\tau)   \, d\tau
    \\
    &:=  \eta^{\text{hom}}(t) + \eta^{\text{inh}}(t).
\end{align*}
We have 
\begin{align*}
\int_0^T  \norm{\eta_x (t)}_{L^\infty_x} \, dt &\lesssim   \sum_\lambda  \int_0^T \norm{P_\lambda \partial_ x \eta^{\text{hom}}(t)}_{L^\infty_x} \, dt + \sum_\lambda  \int_0^T \norm{P_\lambda \partial_ x \eta^{\text{inh}} (t)}_{L^\infty_x} \, dt 
\\
&\lesssim  \underbrace{T^\frac34  \sum_\lambda  \norm{ P_\lambda \eta^{\text{hom}}  }_{ L^4_TL^\infty_x (\R) } }_{:=A_T}+ \underbrace{ \sum_\lambda  \int_0^T \norm{P_\lambda \partial_ x \eta^{\text{inh}} (t)}_{L^\infty_x} \, dt }_{:=B_T}, 
\end{align*}
where we used H\"{o}lder in time to get the first term on the second line. We consider only the case $\beta >1/3$, as the case $\beta =0$, is similar.
Assume $s\ge 9/4 +\delta$ for some $\delta>0$. 
Applying \eqref{str-est-1d}, we estimate the first term on the second line as
\begin{align*}
A_T & \lesssim T^\frac34  (\beta_\ast \mu)^{- \frac 1 4}   \sum_\lambda \lambda^{\frac 34 }  \angles{ \sqrt{ \beta \mu }  \lambda}^{ -\frac {1}4 }  \angles{ \sqrt{\mu}  \lambda}^{ \frac 58 }\norm{    P_\lambda  \eta_0 }_{L^2_x (\R) } 
\\
&\lesssim   T^\frac34(\beta_\ast \mu)^{- \frac 1 4}   \sum_\lambda \angles{  \lambda}^{ \frac {9}8 }\norm{    P_\lambda  \eta_0 }_{L^2_x (\R) } 
\\
&\lesssim   T^\frac34 (\beta_\ast \mu)^{- \frac 1 4}  \norm{     \eta_0 }_{H^{ s} }
.\end{align*}
On the other hand,  we use \eqref{disp-est-1d},  to estimate the second term as
\begin{align*}
B_T& \lesssim  \epsilon  \int_0^T  \int_0^t  \sum_\lambda   \norm{  P_\lambda   \partial_x S_{\mu, \beta, 1} (t-\tau) (\eta \eta_x)  (\tau) }_{ L^\infty_x (\R) } d\tau dt
\\
&\lesssim \ \epsilon  (\beta_\ast \mu)^{- \frac 1 2}    \int_0^T  \int_0^t (t- \tau )^{-\frac12} \underbrace{ \sum_\lambda \lambda^{\frac 32} \angles{ \sqrt{\mu \beta }  \lambda}^{ -\frac {1}2 }  \angles{ \sqrt{\mu}  \lambda}^{ \frac 54 } \norm{    P_\lambda  (\eta^2 ) (\tau) }_{L^1_x (\R) }}_{:= C(\tau)}  \, d\tau dt
\end{align*}
Using the bilinear estimate  \eqref{BH},  we obtain
\begin{align*}
C(\tau) \lesssim  \sum_\lambda (\lambda^{\frac 32 }+ \lambda^{\frac {9}4 }) \norm{    P_\lambda  (\eta^2 ) (\tau) }_{L^1_x (\R) } 
&\lesssim \norm{  
\eta^2(\tau) }_{ \dot B^\frac {3} 2_{1,1} } +  \norm{  
\eta^2(\tau) }_{ \dot B^{\frac {9}4}_{1,1} } 
\\
&\lesssim  \norm{  
\eta(\tau) }^2_{ H^{ s}} . 
\end{align*}

Combining the above estimates, we obtain
\begin{align*}
\int_0^T  \norm{\eta_x (t)}_{L^\infty_x} \, dt & \lesssim T^\frac34  (\beta_\ast \mu)^{- \frac 1 4}   \norm{     \eta_0 }_{H^{s} }+  \epsilon (\beta_\ast \mu)^{- \frac 1 2}   \int_0^T  \int_0^t (t- \tau )^{-\frac12} \norm{  
\eta(\tau) }^2_{ H^{s}}  d\tau dt
\\
& \lesssim T^\frac34  (\beta_\ast \mu)^{- \frac 1 4}  \norm{     \eta_0 }_{H^{s} } +\epsilon (\beta_\ast \mu)^{- \frac 1 2}    T^{ \frac 32}    \| \eta \|^2_{ L^\infty_TH^{s}  }.
\end{align*}
 This concludes the proof of Lemma \ref{lm-etax}.
\end{proof}

\subsection{Control of the solution norm $\norm{ \eta}_{L_T^\infty H^s}$  }
We will now combine \eqref{eta-grnw} and Lemma \ref{lm-etax} with a bootstrap argument to have a control of the solution norm $\norm{ \eta}_{L_T^\infty H^s}$.

\begin{lemma}
\label{lm-wt}
    Let $\mu, \epsilon \in (0,1]$,  $\beta=0$ or  $1/3 <\beta\le 1$ ,  and  $s>s_\beta$ with $s_\beta=11/4$
    if $\beta=0$ and $s_\beta=9/4$ if $\beta>1/3$.
   Assume  
$       \eta\in C([0,T^\ast);H^s(\R) ) 
$    is a solution to  \eqref{wt}-\eqref{wt-data}
    with $\eta_0\in H^s(\R)  $,  where the solution is defined on its maximal time of existence and satisfying
the blow-up alternative:
\begin{align}\label{BA}
    \text{If } \ \ T^\ast <\infty, \text{ then } \limsup_{t\nearrow T^\ast} \norm{ \eta(t)}_{H^s} = \infty.
\end{align}

Then there exists a positive time 
    \begin{equation}\label{T}
       T=  \left(c (\beta_\ast \mu)^{- \frac 1 4} \epsilon \norm{\eta_0}_{H^s} \right)^{ -\frac 43}    
    \end{equation}
    such that $T\ast > T$ and  
    \begin{align*}
        \sup_{t\in [0,T]} \norm{ \eta(t)}_{H^s} \le 4 \norm{ \eta_0}_{H^s},
    \end{align*}   
    where $c=c(s)$ is a positive constant.
\end{lemma}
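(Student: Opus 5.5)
We argue by a continuity (bootstrap) argument that combines the Gr\"onwall bound \eqref{eta-grnw} with the a priori estimate \eqref{etax-est} of Lemma \ref{lm-etax}. Write $A:=\norm{\eta_0}_{H^s}$ and, for $0<T'<T^\ast$, set $N(T'):=\sup_{t\in[0,T']}\norm{\eta(t)}_{H^s}$. Since $\eta\in C([0,T^\ast);H^s)$, the function $N$ is continuous and nondecreasing, with $N(0)=A$.

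Suppose $T'<T^\ast$, $T'\le T$, and $N(T')\le 4A$. Inserting \eqref{etax-est} into \eqref{eta-grnw} gives
\begin{align*}
N(T')\le \exp\Big( c_1\epsilon(\beta_\ast\mu)^{-\frac14}T'^{\frac34}A + c_1\epsilon^2(\beta_\ast\mu)^{-\frac12}T'^{\frac32}(4A)^2\Big)A,
\end{align*}
where $c_1=c_1(s)$. Put $X:=\epsilon(\beta_\ast\mu)^{-1/4}T'^{3/4}A$. The exponent is then $c_1X+16c_1X^2$. The definition \eqref{T} gives $T^{3/4}=\big(c(\beta_\ast\mu)^{-1/4}\epsilon A\big)^{-1}$, so $X\le 1/c$ whenever $T'\le T$. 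Choosing $c=c(s)$ large enough that $c_1/c+16c_1/c^2\le \ln 2$, we get $N(T')\le 2A<4A$. The constants are uniform in $\beta\in(1/3,1]$ (and for $\beta=0$), because the hidden constants in Lemma \ref{lm-etax} are explicit in $\beta_\ast$.

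Now define $T_0:=\sup\{T'\in[0,\min(T,T^\ast)) : N(T')\le 4A\}$. Since $N(0)=A<4A$ and $N$ is continuous, $T_0>0$. The improved bound $N\le 2A$ on $[0,T_0)$ together with continuity shows that the set is both open and closed in $[0,\min(T,T^\ast))$, so $T_0=\min(T,T^\ast)$ and $N\le 2A$ on this whole interval. If $T^\ast\le T$, then $\norm{\eta(t)}_{H^s}$ would stay bounded as $t\nearrow T^\ast$, contradicting the blow-up alternative \eqref{BA}. Hence $T^\ast>T$, and by continuity $\sup_{[0,T]}\norm{\eta(t)}_{H^s}\le 2A\le 4A$.

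The only delicate point is the bookkeeping: one has to check that the quadratic term $\epsilon^2(\beta_\ast\mu)^{-1/2}T^{3/2}A^2$ is precisely $X^2$, so that a single scaling $T\sim(\epsilon(\beta_\ast\mu)^{-1/4}A)^{-4/3}$ controls both terms simultaneously. It is also worth noting that Lemma \ref{lm-etax} requires a solution on $[0,T']$, which is available for every $T'<T^\ast$.
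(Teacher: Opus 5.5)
Your proposal is correct and follows the paper's own argument: you insert the a priori bound \eqref{etax-est} into the Gr\"onwall estimate \eqref{eta-grnw}, use the scaling of $T$ in \eqref{T} so that both terms in the exponent are $O(1/c)$, and close with a bootstrap/continuity argument. Your version is slightly more careful than the paper's, since you track the factor $(4\norm{\eta_0}_{H^s})^2$ in the quadratic term and invoke the blow-up alternative \eqref{BA} explicitly to conclude $T^\ast>T$.
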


\begin{proof}
    Define
    \begin{align}\label{Ttil}
        \Tilde{T} := \sup\left\{ T \in (0,T^\ast) :    \sup_{t\in [0,T]} \norm{ \eta(t)}_{H^s} \le  4\norm{\eta_0 }_{ H^s} \right\}.
    \end{align}
We will argue by contradiction by assuming that $\Tilde{T} < T$.
    Combining \eqref{eta-grnw}, \eqref{etax-est}, 
\eqref{Ttil} and \eqref{T},  we obtain for $0\le t \le \min(T, \tilde T)$,
\begin{align*}
  \norm{ \eta(t)}_{H^s}   & \le  e^{ c_0   \epsilon   \int_0^T  \norm{\eta_x (t)}_{L^\infty_x} \, dt }   \norm{ \eta_0}_{H^s} 
  \\
  & \le   e^{ c_1      \left [ \epsilon (\beta_\ast \mu)^{- \frac 1 4}  T^{ \frac34}  \| \eta_0\|_{ H^{s }}+ \epsilon^2 (\beta_\ast \mu)^{- \frac 1 2}   T^{ \frac 32}    \| \eta_0 \|^2_{  H^{s }  } \right]}   \norm{ \eta_0}_{H^s} 
  \\
  & \le 2   \norm{ \eta_0}_{H^s} 
\end{align*}
by choosing the constant $c$ in \eqref{T} larger that  $ 2c_1/ \ln 2$.
By continuity, we deduce that there exists some $\tau \in  (\widetilde T, T^\ast)$ such that
      $\norm{ \eta(\tau)}_{H^s} \le 4  \norm{ \eta_0}_{H^s} $.   This contradicts the
definition of $\widetilde T$. Therefore,  $ T \le \widetilde T $, where $ T$ is as in \eqref{T}. This concludes the proof of  Lemma \ref{lm-wt}.
\end{proof}

\subsection{Proof of Theorem \ref{thm-wt}  }

\subsubsection*{Existence of solution and Continuity
of the flow map}
With the a priori estimate in Lemma  \ref{lm-wt} at hand, the complete proof of existence of solutions would
then follow from a standard compactness argument implemented on a regularized
version of the equation (for details, see for instance \cite[Section 4.1]{PSST21}).
Furthermore, continuity of the flow map is obtained via the Bona–Smith method \cite{BS1975}.  As this argument is well known, we omit the details and refer the interested reader to \cite{LPS2018}.

\subsubsection*{Uniqueness  of solution  }
This is a consequence of energy estimates for the
diﬀerence of two solutions.  
Suppose that $\eta_ 1 $ and $\eta_2$ be two solutions of the equation 
\eqref{wt} in the class \eqref{eta-solnclass} with initial data $\eta_ 1 (0)=f_1$, $\eta_2(0)=f_2$.  By Sobolev embedding and \eqref{eta-solnbound},  we have for $s>3/2$,
\begin{equation} \label{etaj-solnbound}
  \|  \partial_x \eta_j(t)\|_{ L_x^\infty}\lesssim  \|  \eta_j(t)\|_{ H^s} \lesssim  \|  f_j\|_{ H^s}   \qquad \forall  \ t \in [0,  T]
\end{equation}
where $T=T(\epsilon,\mu, \beta)$ is as in \eqref{wtime}.

Set
$
\zeta := \eta_1 - \eta_2$. 
Then $  \zeta$ solves 
\begin{equation} \label{wt-dif}
\partial_t \zeta + \mathcal K_{\mu, \beta}(D) \partial_x \zeta = -\epsilon (\eta_1 \partial_x \zeta + \zeta \partial_x \eta_2)
\end{equation}
with initial data $\zeta (0)=\zeta_0:=f_1-f_2$.

We want to estimate $\zeta $ in $L^2(\R)$.  Multiplying \eqref{wt-dif} by $\zeta$ and integrating by parts, we deduce that
\begin{align*}
\frac{1}{2} \frac{d}{dt} \int_{\mathbb{R}} \zeta^2 \, dx&= - \epsilon\int_{\mathbb{R}} \zeta  \mathcal K_{\mu, \beta}  \partial_x \zeta  \, dx  - \epsilon  \int_{\mathbb{R}} ( \zeta \eta_1 \partial_x \zeta + \zeta^2 \partial_x \eta_2) \, dx
\\
&=  \frac  \epsilon 2  \int_{\mathbb{R}}   \partial_x \eta_1 \zeta^2  \, dx  - \epsilon  \int_{\mathbb{R}}   \partial_x \eta_2 \zeta^2 \, dx.
\end{align*}
By H\"{o}lder's inequality and \eqref{etaj-solnbound}, 
\begin{align*}
\frac{d}{dt} \norm{ \zeta(t)}^2_{L^2_x}   &\lesssim  \epsilon\left(  \|  \partial_x \eta_1(t)\|_{ L_x^\infty} + \   \|  \partial_x\eta_2(t)\|_{L_x^\infty}  \right)
  \norm{ \zeta(t)}^2_{L^2_x}  
  \\
  &\lesssim \epsilon \left(  \|  f_1\|_{ H^s}+  \|  f_2\|_{ H^s}\right)
  \norm{ \zeta(t)}^2_{L^2_x} .
\end{align*}
 Gr\"onwall's inequality implies
\begin{equation*}
 \sup_{0\le t \le  T } \norm{ \zeta(t)}_{L^2_x}  \lesssim \norm{ \zeta_0}_{L^2_x} ,
    \end{equation*}
  which provides the uniqueness result in Theorem \ref{thm-wt}, by choosing $f_1=f_2$.

\section{A prior  estimates for solutions of \eqref{wtbsq} \& Existence theory  }

As a consequence of the curl-free condition on the initial datum
in Theorem \ref{thm-wtbq}, it follows that \text{curl}  $\mathbf v=0$.  Indeed, applying the curl on the second equation in 
\eqref{wtbsq}, and then
using the fundamental theorem of calculus and \text{curl} $ \mathbf v_0=0$,  we obtain \text{curl}  $\mathbf v=0$.  In other words,
we have the following relation 
\begin{equation}\label{curl}
 \partial_{x_i} v_j =\partial_{x_j} v_i, \qquad (i, j=1, 2).
\end{equation}

Let  $ \mathbbm u:=(\eta, \mathbf v)$ with initial data $ \mathbbm u(0)= \mathbbm u_0:= (\eta_0,   \mathbf v_0) $.  Define the modified energy associated with the Whitham-Boussinesq system  \eqref{wtbsq} by
\begin{align*}
 \mathcal E_s( \mathbbm u(t))&=\frac{1}{2}\int_{\R^d} (J^s\eta)^2+
| \sqrt{\mathcal K_{\mu,\beta }} J^s \mathbf v|^2 + \epsilon  \eta |J^s \mathbf v|^2\, dx,
\end{align*}
which is conserved for $s=0$, i.e.,  $ \mathcal{E}_0( \mathbbm u(t)) = \mathcal{E}_0( \mathbbm u_0) $ (see  the proof of Lemma \ref{lm-energy1d} below).

\subsection{Energy estimates}

\begin{lemma}\label{lm-energy1d}
Let $\mu, \epsilon \in (0,1]$,  $\beta>1/3$,    $d\in \{1,2\}$,  $s> d/2+1$. Suppose that  
$U=(\eta,  \mathbf v) \in C([0,T]; X_ \mu ^s(\mathbb{R}^d))$ is a solution 
to \eqref{wtbsq}
on a time interval $[0,T]$ for some $T>0$. 
Assume that there exists $ h_0 \in (0,1)$ such that
\begin{equation}\label{noncav}
 1+ \varepsilon \eta(x,t) \ge   h_0 \quad \forall \ (x,t)\in \mathbb{R}^d \times [0,T].
\end{equation}
Then, the functional $ \mathcal E_s(U(t))$ satisfies the coercivity estimate
\begin{equation}\label{EX-eq}
h_0 \,\| \mathbbm u(t)\|_{X_\mu^s}^2 
\le 2  \mathcal E_s( \mathbbm u(t)) 
\le   p_d(t) \,\| \mathbbm u(t)\|_{X_\mu^s}^2.
\end{equation}
and the energy estimate
\begin{equation}\label{E-derv}
\frac{ d}{dt} \mathcal E_s( \mathbbm u(t)) \le c_0     \epsilon  h_0^{-1}  p_d(t) q_d(t) \mathcal E_s( \mathbbm u(t))
\end{equation}
for some positive $c_0=c_0(s)$,
where 
\begin{align}\label{pd}
p_d(t):&= 1+ \epsilon \left(\norm{ \eta}_{L^\infty_x(\R^d) } +\norm{\mathbf v}_{L^\infty_x (\R^d)}  \right)
\\
\label{qd}
q_d(t):&= \norm{ \nabla \eta (t)}_{L^\infty_x(\R^d)} +\norm{\nabla \mathbf v (t)}_{L^\infty_x (\R^d)}  +  \norm{  \mathcal K_{\mu,\beta } \nabla \mathbf v (t)}_{L^\infty_x (\R^d)} .
\end{align}

\end{lemma}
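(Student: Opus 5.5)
The plan is to prove the coercivity bound \eqref{EX-eq} by a pointwise comparison of the integrand of $\mathcal E_s$ with the $X^s_\mu$ norm. Then \eqref{E-derv} follows by differentiating $\mathcal E_s$ in time, cancelling the linear terms and the top-order nonlinear terms, bounding the remainders with commutator estimates, and closing with coercivity.

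\emph{Coercivity.} First I would check that $\mathcal K_{\mu,\beta}(\xi)\ge 1$ when $\beta>1/3$. With $x=\sqrt\mu|\xi|$ this reads $\tanh x\,(1+\beta x^2)\ge x$, which holds for $\beta\ge 1/3$ by comparing Taylor expansions (compare \eqref{K-exp}) and the behaviour at large $x$. By Plancherel this gives $\|\sqrt{\mathcal K_{\mu,\beta}}J^s\mathbf v\|_{L^2}^2\ge \|\mathbf v\|_{H^s}^2$. I would then split
\[
|\sqrt{\mathcal K_{\mu,\beta}} J^s\mathbf v|^2+\epsilon\eta|J^s\mathbf v|^2
=\bigl(|\sqrt{\mathcal K_{\mu,\beta}} J^s\mathbf v|^2-|J^s\mathbf v|^2\bigr)+(1+\epsilon\eta)|J^s\mathbf v|^2 .
\]
After integration the first bracket is nonnegative. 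Using $1+\epsilon\eta\ge h_0$ and $h_0<1$, a convex combination of the two representations of the $\mathbf v$-part should give a lower bound of the form $h_0(\|\mathbf v\|_{H^s}^2+\|\sqrt{\mathcal K_{\mu,\beta}}\mathbf v\|_{H^s}^2)$; the exact numerical factor will need adjusting, and the factor $2$ in \eqref{EX-eq} should absorb it. The $\eta$-part is trivial. The upper bound follows directly from $|\epsilon\eta|\le \epsilon\|\eta\|_{L^\infty}\le p_d-1$.

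\emph{Time derivative.} I would compute
\[
\frac{d}{dt}\mathcal E_s=\int J^s\eta\,J^s\eta_t+\mathcal K_{\mu,\beta}J^s\mathbf v\cdot J^s\mathbf v_t+\epsilon\eta\,J^s\mathbf v\cdot J^s\mathbf v_t+\tfrac{\epsilon}{2}\eta_t|J^s\mathbf v|^2
\]
and insert \eqref{wtbsq}.

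The linear terms $-\int J^s\eta\,\mathcal K_{\mu,\beta}\nabla\cdot J^s\mathbf v-\int \mathcal K_{\mu,\beta}J^s\mathbf v\cdot\nabla J^s\eta$ cancel after integration by parts, since $\mathcal K_{\mu,\beta}$ is self-adjoint. The purpose of the weight $\epsilon\eta|J^s\mathbf v|^2$ is a second cancellation. The term $-\epsilon\int J^s\eta\,\eta\,\nabla\cdot J^s\mathbf v$ comes from $J^s\nabla\cdot(\eta\mathbf v)$ after commuting $J^s$ with $\eta$ via \eqref{kato_ponce}. It is cancelled, up to a term $\epsilon\int\nabla\eta\cdot J^s\mathbf v\,J^s\eta$, by $-\epsilon\int\eta\,J^s\mathbf v\cdot\nabla J^s\eta$, which comes from the weighted term with $\mathbf v_t\ni-\nabla\eta$. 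The other piece of $J^s\nabla\cdot(\eta\mathbf v)$, namely $\mathbf v\cdot\nabla J^s\eta$, integrates by parts to $\nabla\cdot\mathbf v\,(J^s\eta)^2$.

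For $-\frac{\epsilon}{2}\int\mathcal K_{\mu,\beta}J^s\mathbf v\cdot J^s\nabla|\mathbf v|^2$ I would use the curl-free relation \eqref{curl} to write $\frac12\nabla|\mathbf v|^2=(\mathbf v\cdot\nabla)\mathbf v$. I would then move $\mathcal K_{\mu,\beta}^{1/2}J^s$ past $v_j$ using Lemma \ref{lm-kpn}. The leading piece $\int v_j\,\partial_j(\mathcal K^{1/2}_{\mu,\beta} J^s\mathbf v)\cdot \mathcal K^{1/2}_{\mu,\beta}J^s\mathbf v$ integrates by parts to $-\frac12\int\nabla\cdot\mathbf v\,|\mathcal K^{1/2}_{\mu,\beta}J^s\mathbf v|^2$. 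The commutator is bounded by $\|\nabla\mathbf v\|_{L^\infty}\|J_\mu^{1/2}J^{s}\mathbf v\|_{L^2}$ plus $\|J^{1/2}_\mu J^s\mathbf v\|_{L^2}\|\nabla\mathbf v\|_{L^\infty}$, and both are controlled by $q_d\|\mathbbm u\|_{X^s_\mu}$.

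The remaining cubic terms are of order $\epsilon^2$, for example $\epsilon^2\int\eta J^s\mathbf v\cdot J^s(\mathbf v\cdot\nabla\mathbf v)$ and $\epsilon\,\eta_t|J^s\mathbf v|^2$ with $\eta_t$ containing $\mathcal K_{\mu,\beta}\nabla\cdot\mathbf v$. These are handled in the same way, by a Kato--Ponce commutator plus integration by parts. They produce the factor $\epsilon\|\eta\|_{L^\infty}$ or $\epsilon\|\mathbf v\|_{L^\infty}$, which is absorbed into $p_d$, and the factor $\|\mathcal K_{\mu,\beta}\nabla\mathbf v\|_{L^\infty}$ in $q_d$.

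Altogether I expect $\frac{d}{dt}\mathcal E_s\lesssim\epsilon\, p_d q_d\|\mathbbm u\|^2_{X^s_\mu}$, and the lower bound in \eqref{EX-eq} then gives \eqref{E-derv} with the factor $h_0^{-1}$.

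The main obstacle is the $\mathcal K_{\mu,\beta}$-weighted transport term. Its top order has the size of $\|\mathcal K_{\mu,\beta}^{1/2}J^{s}\mathbf v\|^2$, so it is controlled only by the symmetric structure from \eqref{curl} together with Lemma \ref{lm-kpn}. Any loss of the $J_\mu^{1/2}$ weight there would break the estimate, so that step needs the most care.
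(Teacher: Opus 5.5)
Your proposal follows the paper's proof essentially step by step: the same expansion of $\frac{d}{dt}\mathcal E_s$, the cancellation of the linear terms, the cancellation of $\epsilon\int\eta\,J^s\nabla\cdot\mathbf v\,J^s\eta$ between the $J^s\nabla\cdot(\eta\mathbf v)$ term and the weight $\epsilon\eta|J^s\mathbf v|^2$, Kato--Ponce for the commutators, and the curl-free symmetrization with Lemma~\ref{lm-kpn} for the $\sqrt{\mathcal K_{\mu,\beta}}$-weighted term. The energy-inequality part is fine.

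\textbf{Coercivity constant.} The one thing to correct is the constant in the lower bound of \eqref{EX-eq}.

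Write $V:=\int|\sqrt{\mathcal K_{\mu,\beta}}J^s\mathbf v|^2+\epsilon\eta|J^s\mathbf v|^2\,dx$. Your two representations give $V\ge h_0\|\sqrt{\mathcal K_{\mu,\beta}}\mathbf v\|_{H^s}^2$ and $V\ge h_0\|\mathbf v\|_{H^s}^2$. Hence you only get $2\mathcal E_s\ge\frac{h_0}{2}\|\mathbbm u\|_{X^s_\mu}^2$. The factor $2$ in \eqref{EX-eq} cannot absorb the missing $\frac12$, because it merely undoes the $\frac12$ in the definition of $\mathcal E_s$.

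In fact the bound with $h_0$ is false for $h_0>\frac12$. Take data with $\eta_0=0$, so that noncavitation holds with $h_0=\frac34$ on a short interval. Take $\widehat{\mathbf v}_0$ supported near $\xi=0$, where $\mathcal K_{\mu,\beta}\approx 1$. Then at $t=0$ one has $2\mathcal E_s\approx\|\mathbf v_0\|_{H^s}^2$, while $h_0\|\mathbbm u_0\|_{X^s_\mu}^2\approx 2h_0\|\mathbf v_0\|_{H^s}^2$.

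The paper's one-line lower bound has the same defect, so this is a flaw of the statement rather than of your method. With $h_0/2$ in place of $h_0$, your argument is complete. Then \eqref{E-derv} holds with $c_0$ doubled, and the later bootstrap changes only by absolute constants.

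\textbf{The bound $\mathcal K_{\mu,\beta}\ge 1$.} For $\beta\ge\frac13$ this does not follow from the behaviour at $0$ and at $\infty$ alone. A complete argument is $\frac{d}{dr}\bigl[(3+r^2)\sinh r-3r\cosh r\bigr]=r(r\cosh r-\sinh r)\ge 0$. This gives $\tanh r\ge 3r/(3+r^2)$, which is exactly the needed inequality.
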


\begin{proof}We use the shorthand $\mathcal K:=\mathcal K_{\mu, \beta}$.
Clearly, from the non-cavitation condition, we obtain
 \begin{align*}
2 \mathcal E_s( \mathbbm u(t)) &= \int_{\R^d} (J^s\eta)^2+
| \sqrt{\mathcal K} J^s \mathbf v|^2 + (1+\epsilon  \eta) |J^s \mathbf v|^2-|J^s \mathbf v|^2\, dx
 \\
 & \ge  h_0 \| \mathbbm u(t)\|_{X_\mu^s}^2 
\end{align*}
and 
by H\"{o}lder’s inequality, 
\begin{align*}
2 \mathcal E_s( \mathbbm u(t)) &\le    \left[ \norm{\eta}_{H^s}^2+    \norm{ \sqrt{ \mathcal K}   \mathbf   v}_{H^s}^2  +\epsilon \norm{\eta}_{L^\infty_x} \norm{\mathbf  v}_{H^s}^2  \right] 
 \\
 &\le    p_d(t) \| \mathbbm u(t)\|_{X_\mu^s}^2 .
\end{align*}
This proves \eqref{EX-eq}.

Next, we prove \eqref{E-derv}.  In view of \eqref{EX-eq}, it suffices to show
\begin{equation}\label{E-derv1}
\frac{ d}{dt} \mathcal E_s( \mathbbm u(t)) \le c_0  \epsilon p_d(t)q_d(t)   \| \mathbbm u(t)\|_{X_\mu^s}^2.
\end{equation}
We give the proof of \eqref{E-derv1} only in two dimensions ($d=2$) as the argument for the one dimensional case is similar (and even easier).

Differentiating $\mathcal E_s( \mathbbm u(t))$ with respect to $t$,  using the system \eqref{wtbsq} for $d=2$ and and applying integration by parts, we obtain
\begin{align*}
\frac{ d}{dt} \mathcal E_s( \mathbbm u(t)) &=-\int_{\R^2} J^s \eta J^s \mathcal  K \nabla \cdot \mathbf  v \, dx  -\epsilon \int_{\R^2} J^s (\eta \nabla \cdot \mathbf v) J^s \eta  \, dx -\epsilon \int_{\R^2}  J^s ( \nabla \eta \cdot  \mathbf v) J^s \eta   \, dx
\\ &-\int_{\R^2} \sqrt \mathcal  K J^s \nabla \eta \cdot \sqrt \mathcal  K J^s   \mathbf v \, dx 
- \frac \epsilon 2 \int_{\R^2}   \sqrt \mathcal  K J^s \nabla (|\mathbf v |^2) \cdot \sqrt \mathcal  K J^s \mathbf v \, dx
\\ &
- \epsilon  \int_{\R^2}  \eta J^s \mathbf v \cdot  J^s \nabla \eta dx  -\frac{\epsilon^2}2 \int_{\R^2}  \eta J^s \mathbf v \cdot J^s \nabla (|\mathbf v |^2) \,  dx
\\
&-\frac{\epsilon}{2}  \int_{\R^2}  \mathcal  K \nabla \cdot \mathbf v |J^s \mathbf v|^2 \, dx    -\frac{\epsilon^2}{2}   \int_{\R^2} \nabla \cdot (\eta \mathbf v) |J^s \mathbf v|^2\,  dx
\\
&:=\sum_{k=1}^{9} I_k(t).
\end{align*}
We remark that if $s=0$,  we can easily see via integration by parts that 
$I_1+I_4=0$,  $I_2 + I_3+ I_6=0$, $I_5+I_8=0$ and $I_7+I_9=0$.  Therefore,  $\sum_{k=1}^{9} I_k(t)=0$, and hence we get the conservation of energy $\mathcal E_0(U(t)) =\mathcal E_0(U_0) $.

Next, we consider the case $s\neq 0$.
By integration by parts we see that $I_1=-I_4$,  so $I_1+I_4=0$.  Moreover,  we can use integration by parts to write 
\begin{align*}
I_2(t) &= -\epsilon  \int_{\R^2} [J^s , \eta]\nabla \cdot \mathbf  v J^s \eta \, dx- \epsilon \int_{\R^2} \eta   J^s \nabla \cdot \mathbf  v J^s \eta  \, dx
\\
I_3(t)&= -\epsilon
\int_{\R^2} [J^s , \mathbf v] \cdot \nabla \eta J^s \eta \, dx+ \frac12 \epsilon \int_{\R^2} \nabla \cdot \mathbf  v (J^s \eta)^2 \, dx 
\\
I_6(t)&= \epsilon
\int_{\R^2}  \nabla \eta  \cdot   J^s \mathbf v J^s \eta \, dx+ \epsilon
\int_{\R^2}  \eta J^s \nabla \cdot \mathbf v J^s \eta \, dx.
\end{align*}

Note that the second terms in $I_2$ and $I_6$ cancel each other out. Hence By H\"{older},  \eqref{kato_ponce} and Sobolev inequalities, we obtain
\begin{align*}
\bigabs{ I_2(t) + I_3(t)+I_6(t) }
& \lesssim    \epsilon   \left[ \norm{ \nabla \mathbf v}_{L^\infty_x}   \norm{J^s  \eta}_{L^2_x} +  \norm{J^s \mathbf v}_{L^2_x} \norm{ \nabla \eta}_{L^\infty_x} \right]  \norm{J^s  \eta}_{L^2_x} 
\\
& \qquad +\epsilon  \norm{ \nabla \eta }_{L^\infty_x}    \norm{J^s  \mathbf v}_{L^2_x}   \norm{J^s  \eta}_{L^2_x} +   \epsilon \norm{ \nabla\mathbf v }_{L^\infty_x}      \norm{J^s  \eta}^2_{L^2_x} 
  \\
  & \lesssim   \epsilon \left[ \norm{ \nabla \mathbf v}_{L^\infty_x} +  \norm{ \nabla \eta}_{L^\infty_x}  \right]     \| \mathbbm u\|^2_{X_{\mu}^s}
  \\
&\lesssim  \epsilon q_2(t) \| \mathbbm u(t)\|_{X_\mu^s}^2.
\end{align*}
Next, we use the curl free condition \eqref{curl},  and integration by parts to write  $I_7$ as
\begin{align*}
I_{7}(t)&= -\epsilon^2  \sum_{i, j=1}^2 
\int_{\R^2}  \eta J^s \mathbf v_i   J^s ( \mathbf v_j \partial_{x_i} \mathbf v_j )  \, dx 
\\
&= -\epsilon^2  \sum_{i, j=1}^2 
\int_{\R^2}  \eta J^s \mathbf v_i   J^s ( \mathbf v_j \partial_{x_j} \mathbf v_i )  \, dx 
\\
&= -\epsilon^2  \sum_{i, j=1}^2 
\int_{\R^2}  \eta J^s \mathbf v_i  [ J^s , \mathbf v_j] \partial_{x_j} \mathbf v_i   \, dx - \frac {\epsilon^2} 2  \sum_{i, j=1}^2 
\int_{\R^2}  \eta v_j  \partial_{x_j} ( J^s \mathbf v_i)^2    \, dx
\\
&= -\epsilon^2  \sum_{i=1}^2 
\int_{\R^2}  [ J^s , \mathbf v] \cdot \nabla \mathbf v_i \eta J^s \mathbf v_i     \, dx + \frac {\epsilon^2} 2  \int_{\R^2} \nabla \cdot (\eta \mathbf v) |J^s \mathbf v|^2\,  dx.
\end{align*}
The second terms in $I_7$ cancel out $I_9$ . Hence by H\"{older},  \eqref{kato_ponce}  and Sobolev inequalities, we obtain
\begin{align*}
\bigabs{ I_7(t) + I_9(t)}
& \lesssim    \epsilon^2   \norm{ \nabla \mathbf v}_{L^\infty_x}   \norm{J^s  \mathbf v}_{L^2_x}   \norm{ \eta}_{L^\infty_x}\norm{J^s   \mathbf v }_{L^2_x} 
  \\
  & \lesssim   \epsilon  (  \epsilon \norm{ \eta}_{L^\infty_x})\left[ \norm{ \nabla \mathbf v}_{L^\infty_x} +  \norm{ \nabla \eta}_{L^\infty_x}  \right]     \| \mathbbm u\|^2_{X_{\mu}^s}
  \\
&\lesssim  \epsilon p_2(t) q_2(t) \| \mathbbm u(t)\|_{X_\mu^s}^2.
\end{align*}

Similarly, 
we use the curl free condition on $ \mathbf v$ and apply integration by parts to write  $I_5$ as
\begin{align*}
I_{5}(t)
&= -\epsilon  \sum_{i=1}^2 
\int_{\R^2}  [ \sqrt \mathcal  K J^s , \mathbf v] \cdot \nabla \mathbf v_i \sqrt \mathcal  K J^s \mathbf v_i     \, dx + \frac {\epsilon} 2  \int_{\R^2} \nabla \cdot  \mathbf v | \sqrt \mathcal  K J^s \mathbf v|^2\,  dx.
\end{align*}
So, by H\"{older} and Lemma \ref{lm-kpn},
\begin{align*}
|I_5(t)|& \le \epsilon   \norm{ \nabla \mathbf v}_{L^\infty_x} \left(  \norm{  \sqrt \mathcal  J_\mu  J^s \mathbf v}_{L^2_x}+ \norm{  \sqrt \mathcal  K  J^s \mathbf v}_{L^2_x} \right) \norm{  \sqrt \mathcal  K  J^s \mathbf v}_{L^2_x} \lesssim  \epsilon  q_2(t) \| \mathbbm u(t)\|_{X_\mu^s}^2,
\end{align*}
where to obtain the second inequality we used $\mathcal  J_\mu\sim_{\beta_0} \mathcal  K_{\mu, \beta}$, which holds for $\beta \in (1/3, \beta_0)$. 
Finally, 
\begin{align*}
|I_8(t)|& \le \epsilon   \norm{ \mathcal  K \nabla \mathbf v }_{L^\infty_x}  \norm{ J^s \mathbf v}^2_{L^2_x}  \lesssim  \epsilon  q_2(t) \| \mathbbm u(t)\|_{X_\mu^s}^2.
\end{align*}
Combining the above estimates yield \eqref{E-derv1} for $d=2$.

\end{proof}

\subsection{A prior estimate for $\int_0^T  q_d(t) \, dt $}

We  use the dispersive properties of the system \eqref{wtbsq} in Lemma \ref{lm-dispstrz-est}
 to obtain the following bound for $\int_0^T  q_d(t) \, dt $, where $q_d$ is given in \eqref{qd}.

\begin{lemma}\label{lm-qdest}
     Let $d\in \{1,2\}$,  $s>d/4 +5/2$, $\mu, \epsilon \in (0,1]$, $\beta >1/3$, and $T>0$.  For a solution $ \mathbbm u:=(\eta, \mathbf v) \in C([0,T]; X_\mu^{s}(\R^d))$ of \eqref{wtbsq}--\eqref{wtbsq-data}.
    Then
\begin{align}
       \label{qdest}
     \int_0^T  q_d(t) \, dt &\lesssim    \sigma_d(\beta, \mu, T) \| \mathbbm u_0\|_{ X_\mu^{s }}+ \epsilon \ \sigma^2_d(\beta, \mu, T) \| \mathbbm u  \|^2_{ L^\infty_T X_\mu^{s }  },
    \end{align}
where
\begin{equation}\label{sigmaT}
     \sigma_d(\beta, \mu, T)= (\beta_\ast \mu)^{- \frac {1-\delta_d}4} T^{\frac {4-d +2\delta_d}4}
 \end{equation}  
for $ \delta_1=0$ and $ \delta_2=\delta>0$ sufficiently small.
    \end{lemma}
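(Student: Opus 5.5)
The proof follows the template of Lemma \ref{lm-etax}, applied to a diagonalized form of \eqref{wtbsq}. Since $\mathrm{curl}\,\mathbf v=0$, we write $\mathbf v=\mathcal R w$ with $w:=-\mathcal R\cdot\mathbf v$ (scalar), using the Riesz transform. We then introduce the diagonal variables
\[
u_\pm := \eta \pm i\,\mathcal K^{\frac12} w ,\qquad \mathcal K=\mathcal K_{\mu,\beta}.
\]
The linear part of \eqref{wtbsq} becomes $i\partial_t u_\pm \mp m_{\beta,\mu,d}(D)u_\pm=\epsilon N_\pm$. The nonlinearity is
\[
N_\pm = -i\nabla\cdot(\eta\mathbf v) \pm \tfrac{1}{2}\mathcal K^{\frac12}\mathcal R\cdot\nabla(|\mathbf v|^2).
\]
The unknowns are recovered as $\eta=\tfrac12(u_++u_-)$ and $\mathbf v=\mathcal R\,\mathcal K^{-1/2}(u_+-u_-)/(2i)$. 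Each component of $q_d$ is a Fourier multiplier applied to $u_\pm$: namely $\nabla\eta$, $\nabla\mathbf v\sim |D|\mathcal K^{-1/2}\mathcal R\mathcal R u_\pm$, and $\mathcal K\nabla\mathbf v\sim |D|\mathcal K^{1/2}\mathcal R\mathcal R u_\pm$. The worst of these is the last, with symbol size $\lambda\angles{\sqrt{\beta\mu}\lambda}\angles{\sqrt\mu\lambda}^{-1/2}$, and \eqref{Bern-est2} handles it at frequency $\lambda$. Duhamel's formula then gives $u_\pm=S(\pm t)u_\pm(0)+\epsilon\int_0^tS(\pm(t-\tau))N_\pm\,d\tau$. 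Since the propagators $S_{\beta,\mu,d}(-t)$ obey the same estimates as in Lemma \ref{lm-dispstrz-est}, it suffices to bound $\sum_\lambda\int_0^T\|P_\lambda(\cdot)\|_{L^\infty_x}\,dt$ for the homogeneous and inhomogeneous parts separately.

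\textbf{Homogeneous part.} For $d=1$ we use Hölder in time with $T^{3/4}$ and the Strichartz estimate \eqref{str-est-1d}. For $d=2$ we take the $L^{2/(1-\delta)}_tL^{2/\delta}_x$ estimate \eqref{str-est-2d}, use Bernstein \eqref{Bern-est1} to pass from $L^{2/\delta}_x$ to $L^\infty_x$ at a cost of $\lambda^{\delta}$, and then apply Hölder in time with factor $T^{(1+\delta)/2}=T^{(4-d+2\delta)/4}$. This produces $\sigma_d\sum_\lambda w_d(\lambda)\|P_\lambda u_\pm(0)\|_{L^2}$. The multiplier weight $w_d(\lambda)$ grows at high frequency at most like $\angles{\lambda}^{1+\frac12+\frac{d}{8}+\cdots}$, and it is summable against the $X^s_\mu$ norm thanks to $\|\mathcal K^{1/2}J^s w\|_{L^2}\lesssim\|\mathbbm u_0\|_{X^s_\mu}$ and the assumption $s>d/4+5/2$. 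The powers of $\sqrt\mu$ are absorbed using $\mu\le1$ and $\angles{\sqrt\mu\lambda}\le\angles{\lambda}$. This yields the first term $\sigma_d\|\mathbbm u_0\|_{X^s_\mu}$.

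\textbf{Inhomogeneous part.} Here we apply the dispersive estimates \eqref{disp-est-1d} and \eqref{disp-est-2d}, taking $\theta=\delta$ in two dimensions so that the time factor is $|t-\tau|^{-(1-\delta)d/2}$. Integrating $\int_0^T\!\int_0^t|t-\tau|^{-(1-\delta)d/2}\,d\tau\,dt$ gives $T^{2-(1-\delta)d/2}=T^{2(4-d+2\delta)/4}$, which together with the factor $(\beta_\ast\mu)^{-(1-\delta)/2}$ is exactly $\sigma_d^2$. Frequency-localized terms $\|P_\lambda N_\pm\|_{L^1}$ remain. Since $N_\pm$ is a derivative (in the $\mathcal K^{1/2}\nabla$ sense) of the quadratic expressions $\eta\mathbf v$ and $|\mathbf v|^2$, the bilinear estimate \eqref{BH} bounds the resulting weighted $\dot B^{\sigma}_{1,1}$ sums by $\|\eta\|_{H^r}\|\mathbf v\|_{H^r}+\|\mathbf v\|_{H^r}^2\lesssim\|\mathbbm u\|^2_{X^s_\mu}$, provided $r<s$.

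\textbf{Main obstacle.} The difficult step is the regularity bookkeeping. The derivative weights coming from $q_d$ (up to $\mathcal K\nabla$), from the dispersive factors $\angles{\sqrt\mu\lambda}^{5/4}$ or $\angles{\sqrt\mu\lambda}^{3(1-\delta)/2}$, and from the $\mathcal K^{1/2}\nabla$ in $N_\pm$ must all fit under $s>d/4+5/2$, with any leftover $\mathcal K^{1/2}$ weight charged to the $\sqrt{\mathcal K}\mathbf v$ part of the $X^s_\mu$ norm rather than to extra derivatives. Where a full $\mathcal K^{1/2}$ cannot be placed on a $\mathbf v$ factor inside the product, we fall back on the crude bound $\mathcal K^{1/2}\lesssim\angles{\lambda}^{1/2}$ with $\mu\le1$. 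We then check that the total high-frequency power stays below $s$, so that \eqref{BH} applies with some $r<s$. Low frequencies are harmless because every weight carries a positive power of $\lambda$ there, which makes the dyadic sums converge as in the proof of \eqref{BH}.
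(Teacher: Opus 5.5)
Your route is the paper's. Your $u_\pm=\eta\pm i\mathcal K^{1/2}w$ (with $\mathcal R=\nabla/|D|$) is, up to normalization, the paper's diagonalization $u_\pm$, $w_\pm$. You treat the free part with \eqref{str-est-1d}, \eqref{str-est-2d} plus H\"older in time, and the Duhamel part with \eqref{disp-est-1d}, \eqref{disp-est-2d} (taking $\theta=\delta$) and \eqref{BH}, as the paper does. Your time and $(\beta_\ast\mu)$ powers agree with the paper, and so do the homogeneous thresholds $13/8$ and $7/4$.

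\textbf{The gap.} It sits in the step you call the main obstacle, which decides $s>d/4+5/2$. You do say leftover $\mathcal K^{1/2}$ should be charged to $\sqrt{\mathcal K}\mathbf v$, which is the right idea, but the count is never done. Worse, the bound you state in the inhomogeneous paragraph, by $\|\eta\|_{H^r}\|\mathbf v\|_{H^r}+\|\mathbf v\|_{H^r}^2$ with $r<s$, is false at the claimed regularity. Take the worst component $\mathcal K\nabla\mathbf v$ of $q_1$ in $d=1$; the dispersive factor times the $q_1$-weight is $\lambda^{1/2}\langle\sqrt\mu\lambda\rangle^{5/4}$.
\begin{itemize}
\item The $\eta\mathbf v$ part of $N_\pm$ carries $\lambda^{3/2}\langle\sqrt\mu\lambda\rangle^{5/4}\lesssim\lambda^{3/2}+\lambda^{11/4}$. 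Here the crude bound is forced, since no weight can be moved onto $\eta$, and this is what produces $s>11/4$.
\item The $|\mathbf v|^2$ part carries $\lambda^{3/2}\langle\sqrt\mu\lambda\rangle^{7/4}\lesssim\lambda^{3/2}+\mu^{7/8}\lambda^{13/4}$. Bounding it by $\|\mathbf v\|_{H^r}^2$ requires $r>13/4$.
\item In $d=2$ the two weights are $\lambda^{2+2\delta}\langle\sqrt\mu\lambda\rangle$ and $\lambda^{2+2\delta}\langle\sqrt\mu\lambda\rangle^{3/2}$. The same naive bound then needs $r>7/2>3$.
\end{itemize}
So your fallback $\mathcal K^{1/2}\lesssim\langle\lambda\rangle^{1/2}$ is admissible only on the $\eta\mathbf v$ term. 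On the $|\mathbf v|^2$ term the $\mu$-powers must be kept.

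\textbf{How to close it.} For $\beta>1/3$ and $\mu\le1$ one has $\mu^{1/4}\langle\xi\rangle^{1/2}\le\langle\sqrt\mu\,\xi\rangle^{1/2}\lesssim\mathcal K^{1/2}(\xi)$. Hence $\mu^{1/4}\|\mathbf v\|_{H^{r+1/2}}\lesssim\|\mathbbm u\|_{X^r_\mu}$, and in $d=1$
\[
\mu^{7/8}\|v^2\|_{\dot B^{13/4}_{1,1}}\lesssim\mu^{3/8}\bigl(\mu^{1/4}\|v\|_{H^{13/4+\delta}}\bigr)^2\lesssim\|\mathbbm u\|^2_{X^{11/4+\delta}_\mu}.
\]
Similarly, in $d=2$,
\[
\mu^{3/4}\||\mathbf v|^2\|_{\dot B^{7/2+2\delta}_{1,1}}\lesssim\mu^{1/4}\bigl(\mu^{1/4}\|\mathbf v\|_{H^{7/2+3\delta}}\bigr)^2\lesssim\|\mathbbm u\|^2_{X^{3+3\delta}_\mu}.
\]
The paper does the equivalent thing differently: it writes $\nabla|\mathbf v|^2=2(\mathbf v\cdot\nabla)\mathbf v$ (using curl-free). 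It then uses $\mu^{1/4}\|\nabla\mathbf v\|_{H^{s-1/2}}\lesssim\|\mathbbm u\|_{X^s_\mu}$ on the differentiated factor. With this count written out, your argument is complete and coincides with the paper's proof.
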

\begin{proof}

We diagonalize
the system \eqref{wtbsq} in one dimension ($d=1$), by defining
\begin{align*}
u_\pm:=\frac{\eta \mp \sqrt { \mathcal K_{\mu,\beta }}  v}{2  },  
\end{align*}
and hence
\begin{equation}\label{upm-1d}
\eta=u_++ u_-, \qquad  v= -  \mathcal K_{\mu,\beta } ^{-\frac12 }   (u_+ - u_-).
\end{equation}

Consequently, the system \eqref{wtbsq} in one dimension transforms to 
\begin{equation}
\label{wtbsq-1d}
 (i\partial_t\pm m_{\mu, \beta,  1}(D) ) u_\pm
=  \frac \epsilon 2  \underbrace{ \left[ -i( \eta v)_x  \pm i     \sqrt { \mathcal K_{\mu,\beta }} ( vv_x )\right]}_{:=F_\pm  (\eta, v)},
\end{equation}
and the corresponding initial data become
\begin{equation}
\label{wtbsq-1d-data}
u_\pm (0):= f_\pm = \frac{\eta_0 \mp  \sqrt { \mathcal K_{\mu,\beta }} v_0}{2  }\in H^s(\R).
\end{equation}
The Duhamel formulation for
\eqref{wtbsq-1d}--\eqref{wtbsq-1d-data}  
becomes
\begin{equation}\label{upm-duhm}
\begin{split}
    u_\pm(t) &= S_{\mu, \beta, 1}(\mp t) f_\pm  - \epsilon \int_0^t  S_{\mu,  \beta, 1}(\mp(t-\tau) )  F_\pm  (\eta, v)(\tau)   \, d\tau
    \\
     &:= u^{\text{hom}}_\pm(t) + u^{\text{inh}}_\pm(t).
\end{split}
\end{equation}

To diagonalize
the system \eqref{wtbsq} in two dimension ($d=2$), we define
\begin{align*}
w_\pm=\frac{\eta \mp \sqrt { \mathcal K_{\mu,\beta }} \mathcal R \cdot  \mathbf  v}{2  }.
\end{align*}

Since $ \mathbf v$ is a curl-free vector field, i.e.,
$\nabla \times \mathbf v=0$,  we have
$$
\nabla \nabla \cdot \mathbf  v = \Delta \mathbf  v=-|D|^2
\quad \Rightarrow \quad   \mathbf v= \mathcal R ( \mathcal R \cdot   \mathbf v)$$
which in turn implies
\begin{equation}\label{wpm-2d}
\eta=w_++ w_-, \qquad  \mathbf v= -  \mathcal K_{\mu,\beta } ^{-\frac12 }  \mathcal R (w_+ - w_-).
\end{equation}
Note also that for curl-free $\mathbf  v$, we have $\nabla ( |\mathbf v|^2)= 2(\mathbf v \cdot \nabla )\mathbf v$.

Consequently,  the system \eqref{wtbsq} in two dimension transforms to 
\begin{equation}\label{wtbsq-2d}
 (i\partial_t\pm m_{\mu, \beta,  2}(D) ) w_\pm
=   \frac \epsilon 2  \underbrace{ \left[-i \nabla \cdot (\eta \mathbf{v})  \pm i    \sqrt { \mathcal K_{\mu,\beta }} \mathcal R \cdot ( ( \mathbf v  \cdot \nabla)   \mathbf v ) \right]}_{:=G_\pm  (\eta,  \mathbf v)}  
\end{equation}
and the corresponding initial data becomes
\begin{equation}
\label{wtbsq-2d-data}
w_\pm (0):= g_\pm = \frac{\eta_0 \mp \sqrt { \mathcal K_{\mu,\beta }} \mathcal R \cdot  \mathbf  v_0}{2  }\in H^s(\R^2).
\end{equation}

The Duhamel formulation for
the Cauchy problem \eqref{wtbsq-2d}--\eqref{wtbsq-2d-data} 
becomes
\begin{equation}\label{wpm-duhm}
\begin{split}
w_\pm(t)& = S_{\mu, \beta,  2}(\mp t) g_\pm  -  \epsilon \int_0^t  S_{\mu, \beta, 2}(\mp(t-\tau) )  G_\pm  (\eta,  \mathbf v)(\tau)   \, d\tau
\\
     &:= w^{\text{hom}}_\pm(t) + w^{\text{inh}}_\pm(t).
\end{split}
\end{equation}

\begin{remark}
    
The propagators $S_{\mu, \beta,  d}(\pm t)$ in \eqref{upm-duhm}  and  \eqref{wpm-duhm} satisfy the estimates in Lemma \ref{lm-dispstrz-est}.
\end{remark}

\subsubsection*{ \underline{Proof of \eqref{qdest} for $d=1$}}

Recall that $$ q_1(t)= \norm{ \eta_x (t)}_{L^\infty_x(\R)} +\norm{ v_x (t)}_{L^\infty_x (\R)}  +  \norm{  \mathcal K_{\mu,\beta } v_x (t)}_{L^\infty_x (\R)}. $$
From \eqref{upm-1d} and  \eqref{upm-duhm},  we get
 \begin{align*}
\int_0^T  q_1(t) \, dt  &\le  \sum_\pm \int_0^T \sum_{j=-1}^1    \norm{  K_{\mu,\beta } ^{\frac j2}  \partial_x u_\pm (t)}_{L^\infty_x (\R)} \, dt
\\
&\lesssim   \sum_\pm \int_0^T \underbrace{\sum_\lambda \sum_{j=-1}^1     \norm{ P_\lambda K_{\mu,\beta } ^{\frac j2}  \partial_x  u^{\text{hom}}_\pm(t)}_{L^\infty_x (\R)}}_{:= q^\pm_{11}(t)} \, dt+  \sum_\pm  \int_0^T \underbrace{\sum_\lambda \sum_{j=-1}^1     \norm{ P_\lambda K_{\mu,\beta } ^{\frac j2}  \partial_x  u^{\text{inh}}_\pm(t)}_{L^\infty_x (\R)}}_{:= q^\pm_{12}(t)} \, dt.
 \end{align*}
 
Fix $s= 11/4+\delta$ for any $\delta>0$.  Applying H\"{o}lder in time and then \eqref{str-est-1d},   we get
    \begin{align*}
 \int_0^T  q_{11}^\pm(t) \, dt 
& \lesssim T^\frac34 \sum_\lambda \sum_{j=-1}^1   
   \norm{    P_\lambda  K_{\mu,\beta } ^{\frac j2}  \partial_x S_{\mu, \beta, 1}(\pm t) f_\pm }_{ L^4_TL^\infty_x (\R) }
   \\
   & \lesssim T^\frac34
  (\beta_\ast \mu)^{- \frac 14} \sum_\lambda \lambda^{\frac34
 }  \left(\sum_{j=-1}^1      \angles{ \sqrt{\mu}  \lambda}^{ \frac38 +\frac j2 }\right) \norm{    P_\lambda   f_\pm }_{ L^2_x (\R) } 
 \\& \lesssim T^\frac34
   (\beta_\ast \mu)^{- \frac 14} \sum_\lambda 
 \angles{\lambda}^{\frac {13}8} \norm{    P_\lambda   f_\pm }_{ L^2_x (\R) } 
 \\
& \lesssim T^\frac34
   (\beta_\ast \mu)^{- \frac 14}  \norm{       f_\pm }_{ H^{\frac {13} 8+\delta  }   }
 \\
 &\lesssim   T^\frac34 (\beta_\ast \mu)^{- \frac 14} \|  \mathbbm u_0\|_{X_{\mu}^{s }  }, 
 \end{align*}
 where to get the last line we used 
 $$
 \norm{       f_\pm }_{ H^r}
 \lesssim  \|\eta_0 \|_{ H^{r}}+  \| \sqrt \mathcal K_{\mu,\beta }  v_0  \|_{ H^{r}}
 \lesssim   \|  \mathbbm u_0\|_{X_{\mu}^{r}}.
 $$

On the other hand,  we use \eqref{disp-est-1d} to obtain
 \begin{align*}
 \int_0^T  q_{12}^\pm(t) \, dt 
& \lesssim  \epsilon (\beta_\ast \mu)^{- \frac 1 2}   \int_0^T  \int_0^t (t- \tau )^{-\frac12} A(\tau)\, d\tau dt
 \end{align*}
 where
\begin{align*}
 A(\tau) 
=\sum_\lambda \sum_{j=-1}^1  \lambda^{\frac12 }          \angles{ \sqrt{\mu}  \lambda}^{ \frac34 +\frac j2 } \left[ \lambda \norm{  P_\lambda 
(\eta v)(\tau) }_{ L^1_x (\R)} + \angles{ \sqrt{\mu}  \lambda}^\frac12 \norm{  P_\lambda 
(v v_x)(\tau) }_{ L^1_x (\R)  } \right] .
 \end{align*}
This can be estimated as
\begin{align*}
  A(\tau) & \lesssim  
  \sum_\lambda  \left[  (\lambda^{\frac 32
 }  +\lambda^\frac{11}4)  \norm{  P_\lambda 
(\eta v)(\tau) }_{ L^1_x (\R)} +  (\lambda^{\frac 12
 }  +\mu^\frac 78\lambda^\frac {9}4)\norm{  P_\lambda 
(v v_x)(\tau) }_{ L^1_x (\R)  } \right]
\\
& \lesssim  
     \norm{  
(\eta v)(\tau) }_{ \dot B^\frac {3} 2_{1,1} } + \norm{  
(\eta v)(\tau) }_{ \dot B^\frac {11} 4_{1,1} }+ \norm{  
(v v_x)(\tau) }_{ { \dot B^\frac 1 2_{1,1} }  } + \mu^\frac 14 \norm{  
(v v_x)(\tau) }_{  \dot B^ \frac94_{1,1}   }  
\\
& \lesssim    \|\eta(\tau) \|_{H^{\frac {11} 4+\delta}}  \| v(\tau)\|_{H^{\frac {11} 4+\delta}}  +   \| v(\tau)\|^2_{H^{\frac32+\delta}} + \mu^\frac 14 \| v(\tau) \|_{H^{\frac 94+\delta}}   \norm{  
v_x(\tau) }_{H^{\frac {9} 4+\delta}} 
\\
& \lesssim   \|  \mathbbm u(\tau) \|^2_{X_{\mu}^ {s}},
 \end{align*}
where to get the third line we used \eqref{BH} and to get the last line we used 
 $$
 \mu^\frac 14   \norm{  
v_x(\tau) }_{H^{\frac {9} 4+\delta}}\lesssim  \mu^\frac 14  \| |D|^\frac12 v (\tau) \|_{H^{\frac{11}4 +\delta}} 
 \\
   \lesssim   \|  \mathbbm u(\tau) \|^2_{X_{\mu}^ {s}} .
 $$

 Combining the above estimates, we obtain
\begin{align*}
\int_0^T  q_1(t) \, dt &\lesssim   (\beta_\ast \mu)^{- \frac 14} T^\frac34 \|  \mathbbm u_0\|_{X_{\mu}^{ \frac {13} 8+\delta}}+ \epsilon (\beta_\ast \mu)^{- \frac 1 2} \int_0^T  \int_0^t (t- \tau )^{-\frac12}  \|  \mathbbm u(\tau)\|^2_{X_{\mu}^{\frac {11} 4+\delta}}d\tau dt
\\
&\lesssim  (\beta_\ast \mu)^{- \frac 14} T^\frac34 \|  \mathbbm u_0\|_{X_{\mu}^{s}}+ \epsilon (\beta_\ast \mu)^{- \frac 1 2} T^{ \frac 32}     \|  \mathbbm u \|^2_{ L^\infty_T X_\mu^{s  }  }
 \end{align*}
 which is the desired estimate \eqref{qdest} for $d=1$.

\subsubsection*{ \underline{Proof of \eqref{qdest} for $d=2$}} 
Recall that
 $$
q_2(t)= \norm{ \nabla \eta (t)}_{L^\infty_x(\R^2)} +\norm{\nabla \mathbf v (t)}_{L^\infty_x (\R^2)}  +  \norm{  \mathcal K_{\mu,\beta } \nabla \mathbf v (t)}_{L^\infty_x (\R^2)} .
$$
 From \eqref{wpm-2d} and  \eqref{wpm-duhm},  we get
 \begin{align*}
\int_0^T  q_2(t) \, dt  
&\lesssim   \sum_\pm \int_0^T \underbrace{\sum_\lambda \sum_{j=-1}^1     \norm{ P_\lambda K_{\mu,\beta } ^{\frac j2}  \nabla w^{\text{hom}}_\pm(t) }_{L^\infty_x (\R^2)}}_{:= q^\pm_{21}(t)} \, dt + \sum_\pm \int_0^T \underbrace{\sum_\lambda \sum_{j=-1}^1     \norm{ P_\lambda K_{\mu,\beta } ^{\frac j2}  \nabla w^{\text{inh}}_\pm(t) }_{L^\infty_x (\R^2)}}_{:= q^\pm_{22}(t)} \, dt  .
 \end{align*}

Fix $s=3+ 3\delta$ for any $\delta>0$.  Applying H\"{o}lder in time,  Bernstein inequality \eqref{Bern-est1} and \eqref{str-est-2d},  we get
    \begin{align*}
 \int_0^T  q_{21}^\pm(t) \, dt 
& \lesssim T^{ \frac {1 + \delta}2 } \sum_\lambda \sum_{j=-1}^1   \lambda^{\delta
 }
   \norm{    P_\lambda  K_{\mu,\beta } ^{\frac j2}  \nabla  S_{\mu, \beta, 2}(\pm t) g_\pm }_{ L^{\frac 2{1- \delta}}_T L^{\frac 2 \delta}_x (\R^2) }
   \\
   & \lesssim T^{ \frac {1 + \delta}2 }  (\beta_\ast \mu)^{- \frac {1-\delta} 4}  \sum_\lambda \lambda^{1+\delta
 }  \left(\sum_{j=-1}^1     \angles{ \sqrt{\mu}  \lambda}^{ \frac 14 +\frac j2 }\right) \norm{    P_\lambda   g_\pm }_{ L^2_x (\R^2) } 
 \\& \lesssim T^{ \frac {1 + \delta}2 }  (\beta_\ast \mu)^{- \frac {1-\delta} 4} \sum_\lambda    \angles{  \lambda}^{ \frac 74 +\delta} \norm{    P_\lambda   g_\pm }_{ L^2_x (\R^2) } 
 \\
& \lesssim 
  T^{ \frac {1 + \delta}2 } (\beta_\ast \mu)^{- \frac {1-\delta} 4}  \norm{       g_\pm }_{ H^{ \frac 74 + 2\delta}}
  \\
& \lesssim  T^{ \frac {1 + \delta}2 }  (\beta_\ast \mu)^{- \frac {1-\delta} 4}  \|  \mathbbm u_0\|_{X_{\mu}^{s}}, 
 \end{align*}
where to get the last line we used 
 $$
 \norm{       g_\pm }_{ H^r}
 \lesssim  \|\eta_0 \|_{ H^{r}}+  \| \sqrt \mathcal K_{\mu,\beta }  \mathcal R \cdot  \mathbf v_0  \|_{ H^{r}}
 \lesssim   \|  \mathbbm u_0\|_{X_{\mu}^{r}}.
 $$

On the other hand,  we use \eqref{disp-est-2d} with $\theta=\delta$, to obtain
 \begin{align*}
 \int_0^T  q_{22}^\pm(t) \, dt 
& \lesssim  \epsilon (\beta_\ast \mu)^{- \frac {1-\delta} 2} \int_0^T  \int_0^t (t- \tau )^{-1+\delta} B(\tau)\, d\tau dt
 \end{align*}
 where
\begin{align*}
 B(\tau) 
=\sum_\lambda \sum_{j=-1}^1  \lambda^{1+2\delta }          \angles{ \sqrt{\mu}  \lambda}^{ \frac 12 +\frac j2 } \left[ \lambda \norm{  P_\lambda 
(\eta \mathbf v)(\tau) }_{ L^1_x (\R^2)} + \angles{ \sqrt{\mu}  \lambda}^\frac12 \norm{  P_\lambda 
( \mathbf v  \cdot \nabla)   \mathbf v )(\tau) }_{ L^1_x (\R^2)  } \right].
 \end{align*}
This can be estimated as
\begin{align*}
  B(\tau) & \lesssim  
  \sum_\lambda  \left[  (\lambda^{2+ 2\delta
 }  + \lambda^{3+ 2\delta
 }) \norm{  P_\lambda 
(\eta \mathbf v)(\tau) }_{ L^1_x (\R^2)} +   (\lambda^{1+ 2\delta
 }  + \mu^\frac 14 \lambda^{\frac 52+ 2\delta
 })   \norm{  P_\lambda 
( \mathbf v  \cdot \nabla)   \mathbf v )(\tau) }_{ L^1_x (\R^2)  }    \right]
\\
& \lesssim   \sum_{k=1}^2 \norm{  
(\eta  \mathbf v)(\tau) }_{ \dot B^{1+k+2\delta}_{1,1} } + \norm{  
 P_\lambda 
( \mathbf v  \cdot \nabla)   \mathbf v )(\tau) }_{ { \dot B^{1+2\delta}_{1,1} }  } + \mu^\frac 14  \norm{   P_\lambda 
( \mathbf v  \cdot \nabla)   \mathbf v )(\tau)}_{  \dot B^ {\frac 52 +2\delta}_{1,1}   } 
\\
& \lesssim    \|\eta(\tau) \|_{H^{3+ 3\delta}}  \|  \mathbf v(\tau)\|_{H^{3+ 3\delta}}  +   \| \mathbf v(\tau)\|^2_{H^{2+2 \delta}} + \mu^\frac 14 \| \mathbf v(\tau) \|_{H^{\frac 52+ 3\delta}}   \norm{  \nabla
\mathbf v(\tau) }_{H^{\frac {5} 2+ 3\delta}} 
\\& 
   \lesssim   \|  \mathbbm u(\tau) \|^2_{X_{\mu}^ {s}},
 \end{align*}
where to get the third line we used \eqref{BH} and to get the last line we used 
$$
 \mu^\frac 14   \norm{  
 \nabla
\mathbf v(\tau)  }_{H^{\frac {5} 2+ 3\delta}}\lesssim  \mu^\frac 14  \| |D|^\frac12 \mathbf v (\tau) \|_{H^{3 +3\delta}} 
 \\
   \lesssim   \|  \mathbbm u(\tau) \|^2_{X_{\mu}^ {s}}. 
 $$
 
  Combining the above estimates, we obtain
\begin{align*}
\int_0^T  q_2(t) \, dt &\lesssim  (\beta_\ast \mu)^{- \frac {1-\delta} 4} T^{ \frac {1 + \delta}2 } \|  \mathbbm u_0\|_{X_{\mu}^s} + \epsilon \ (\beta_\ast \mu)^{- \frac {1-\delta} 2}  \int_0^T  \int_0^t(t- \tau )^{-1+\delta}  \|  \mathbbm u(\tau)\|^2_{X_{\mu}^{s}}d\tau dt
\\
&\lesssim   (\beta_\ast \mu)^{- \frac {1-\delta} 4}  T^{ \frac {1 + \delta}2 } \|  \mathbbm u_0\|_{X_{\mu}^{s}}  +\epsilon  (\beta_\ast \mu)^{- \frac {1-\delta} 2} T^{ 1+\delta}     \|  \mathbbm u \|^2_{ L^\infty_T X_\mu^{ s } }.
 \end{align*}
 which is the desired estimate \eqref{qdest} for $d=2$.

\end{proof}

\subsection{Control of the solution norm $\norm{  \mathbbm u}_{L_T^\infty X^s_\mu}$ }

We will now combine the estimates in Lemma \ref{lm-energy1d} and Lemma \ref{lm-qdest} with a bootstrap argument to have a control of the solution norm $\norm{  \mathbbm u}_{L_T^\infty X^s_\mu}$.

\begin{lemma}
\label{lm-ap-wtbsq}
   Let $s>d/4 +5/2$, $\mu, \epsilon \in (0,1]$ and $\beta >1/3$.  
   Assume  
$        \mathbbm u:= (\eta, v) \in C([0,T^\ast);X^s_\mu(\R^d) ) 
$    is a solution to  \eqref{wtbsq} 
    corresponding to the initial data $ \mathbbm u_0:=(\eta_0,  \mathbf v_0 )\in X^s_\mu(\R^d)  $ satisfying the non-cavitation condition $\inf_{x \in \R^d} \left\{1+ \epsilon \eta_0(x)\right\} \ge h_0 $ for some $h_0\in (0,1)$,   where the solution is defined on its maximal time of existence and satisfying
the blow-up alternative:
\begin{align}\label{eq:BA}
    \text{If } \ \ T^\ast <\infty, \text{ then } \limsup_{t\nearrow T^\ast} \norm{  \mathbbm u(t)}_{X^s_\mu} = \infty.
\end{align}
Then there exists a positive time 
    \begin{equation}\label{Td}
       T_d=    \left(c\ell_0^2  (\beta_\ast \mu)^{- \frac {1-\delta_d}4} \epsilon \norm{ \mathbbm u_0}_{X^s_\mu} \right)^{ -\frac {4}{4-d+2\delta_d}}    
    \end{equation}
    such that $T\ast > T_d$ and  
    \begin{align*}
        \sup_{t\in [0,T_d]} \norm{  \mathbbm u(t)}_{X^s_\mu} \le 4\ell_0  \norm{  \mathbbm u_0}_{X^s_\mu},
    \end{align*}
    where $\ell_0=\sqrt{ p_0/h_0}$ with $p_0:=1 + \epsilon \norm{ \mathbbm u_0}_{L^\infty_x}$,  $c=c(s)$ is a positive constant, $ \delta_1=0$ and $ \delta_2>0$ sufficiently small.
\end{lemma}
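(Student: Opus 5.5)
The argument is a continuity (bootstrap) argument modeled on the proof of Lemma \ref{lm-wt}, with two extra ingredients: the energy $\mathcal E_s$ is only equivalent to $\norm{\mathbbm u}^2_{X^s_\mu}$ up to the factors $h_0$ and $p_d(t)$ from \eqref{EX-eq}, and the non-cavitation condition \eqref{noncav} must be propagated along the bootstrap. Define
\[
\widetilde T:=\sup\Big\{T\in(0,T^\ast):\ \sup_{[0,T]}\norm{\mathbbm u(t)}_{X^s_\mu}\le 4\ell_0\norm{\mathbbm u_0}_{X^s_\mu},\ \inf_{[0,T]\times\R^d}(1+\epsilon\eta)\ge h_0/2,\ \sup_{[0,T]}p_d(t)\le 2p_0\Big\}.
\]
This set is nonempty by continuity in time, since $s>d/2$ gives $X^s_\mu\hookrightarrow L^\infty$. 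Suppose, for contradiction, that $\widetilde T<T_d$. We then work on $[0,\widetilde T]$.

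\emph{Energy step.} On $[0,\widetilde T]$ the energy estimate \eqref{E-derv} (with $h_0$ replaced by $h_0/2$) gives $\frac{d}{dt}\mathcal E_s\le 4c_0\epsilon h_0^{-1}p_0\,q_d(t)\,\mathcal E_s$. Gr\"onwall then yields
\[
\mathcal E_s(\mathbbm u(t))\le \exp\Big(4c_0\epsilon h_0^{-1}p_0\int_0^{t}q_d\Big)\mathcal E_s(\mathbbm u_0).
\]
Using \eqref{EX-eq} at time $t$ (lower bound, constant $h_0/2$) and at time $0$ (upper bound, constant $p_0$), we get
\[
\norm{\mathbbm u(t)}^2_{X^s_\mu}\le \frac{2p_0}{h_0}\exp(\cdots)\,\norm{\mathbbm u_0}^2_{X^s_\mu}.
\]
This is where $\ell_0^2=p_0/h_0$ enters.

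\emph{Dispersive step.} We insert Lemma \ref{lm-qdest} together with the bootstrap bound $\norm{\mathbbm u}_{L^\infty_TX^s_\mu}\le 4\ell_0\norm{\mathbbm u_0}_{X^s_\mu}$. The exponent is then bounded by
\[
C\ell_0^2\big[X+X^2\big],\qquad X:=\epsilon\,\sigma_d(\beta,\mu,T_d)\,\norm{\mathbbm u_0}_{X^s_\mu},
\]
using $\ell_0\ge1$ so that all $\ell_0$ powers can be absorbed into $\ell_0^2$. By \eqref{Td} and \eqref{sigmaT},
\[
\sigma_d(T_d)=(\beta_\ast\mu)^{-\frac{1-\delta_d}{4}}T_d^{\frac{4-d+2\delta_d}{4}}=\big(c\ell_0^2\,\epsilon\,\norm{\mathbbm u_0}_{X^s_\mu}\big)^{-1},
\]
so $X=(c\ell_0^2)^{-1}$. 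Hence $\ell_0^2(X+X^2)\lesssim c^{-1}$, and taking $c$ large makes the exponential at most $2$. This gives $\norm{\mathbbm u(t)}_{X^s_\mu}\le 2\ell_0\norm{\mathbbm u_0}_{X^s_\mu}$, which strictly improves the first bootstrap condition.

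\emph{Propagating non-cavitation and $p_d$ (main obstacle).} The conditions on $1+\epsilon\eta$ and on $p_d$ must also be strictly improved, and I expect this to be the delicate point. My plan is to write
\[
\eta(t)-\eta_0=-\int_0^t\big(\mathcal K\nabla\cdot\mathbf v+\epsilon\nabla\cdot(\eta\mathbf v)\big)\,d\tau
\]
and estimate
\[
\epsilon\norm{\eta(t)-\eta_0}_{L^\infty}\le \epsilon\int_0^t q_d\,d\tau+\epsilon^2\int_0^t\big(\norm{\eta}_{L^\infty}q_d+\norm{\mathbf v}_{L^\infty}q_d\big)\,d\tau\lesssim \epsilon p_0\int_0^t q_d\,d\tau.
\]
From the dispersive step, $\epsilon\int_0^t q_d\lesssim(c\ell_0^2)^{-1}$. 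The aim is to make this smaller than $h_0/2$. This holds because $\ell_0^{-2}=h_0/p_0$, which supplies exactly one factor $h_0$ once $c$ is large. I also need $\epsilon\int_0^t q_d\lesssim h_0/c$, which is how the $h_0^{-1}$ in \eqref{btime} arises. The same estimate applied to $\mathbf v$, using the second equation of \eqref{wtbsq}, keeps $p_d\le 3p_0/2$. If the $\mathcal K\nabla\cdot\mathbf v$ term is not directly controlled by $q_d$, I will use that $q_d$ already contains $\norm{\mathcal K\nabla\mathbf v}_{L^\infty}$.

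\emph{Conclusion.} Once all three bootstrap conditions hold with strict inequality on $[0,\widetilde T]$, continuity extends them past $\widetilde T$, contradicting its definition. Hence $\widetilde T\ge T_d$, and the blow-up alternative \eqref{eq:BA} gives $T^\ast>T_d$.
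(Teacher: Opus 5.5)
Your proposal is correct and follows essentially the paper's argument: the energy estimate \eqref{E-derv} with $h_0/2$ and $2p_0$, Gr\"onwall plus the coercivity \eqref{EX-eq}, Lemma \ref{lm-qdest} with $\epsilon\,\sigma_d(T_d)\norm{\mathbbm u_0}_{X^s_\mu}=(c\ell_0^2)^{-1}$, and the fundamental theorem of calculus with $\norm{\eta_t}_{L^\infty_x}+\norm{\mathbf v_t}_{L^\infty_x}\lesssim p_d q_d$ to propagate non-cavitation and the bound on $p_d$. The only structural difference is that you put all three conditions into the bootstrap, whereas the paper bootstraps only the $X^s_\mu$ bound and gets $p_d\le 2p_0$ by absorbing $\frac12\norm{p_d}_{L^\infty_T}$; one small slip is that the quadratic term in the exponent is $\ell_0^4X^2$, which $\ell_0\ge1$ cannot reduce to $\ell_0^2X^2$, but it equals $(\ell_0^2X)^2=c^{-2}$, so your conclusion is unaffected.
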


\begin{proof}
Define
    \begin{align}\label{Ttilde}
        \Tilde{T} := \sup\left\{ T \in (0,T^\ast) :    \sup_{t\in [0,T]} \norm{  \mathbbm u(t)}_{X^s_\mu} \le  4\ell_0 \norm{  \mathbbm u_0}_{X^s_\mu} \right\}.
    \end{align}
To obtain the result, we will argue by contradiction by assuming that $\Tilde{T} < T_d$.
    
Recall
$        p_d(t) :=  1 + \epsilon \norm{ \mathbbm u(t)}_{L^\infty_x}
$   and hence $p_d(0) = p_0=h_0\ell_0^2$.
We claim that the following hold for all $0\le t\le  \min(T_d,\tilde T)$:
        \begin{align}\label{Apriori1}
 \inf_{x \in \R} \left\{ 1+ \epsilon\eta(x,t) \right\}& \geq \frac{h_0}{2}    ,
   \\
   \label{Apriori2}
    p_d(t) &  \leq 2p_0   ,
    \end{align}
  The first inequality is the non-cavitation condition \eqref{noncav} with $ h_0/2$ replacing $h_0$.  
      Using \eqref{Apriori1} and \eqref{Apriori2} in \eqref{E-derv},   we get 
\begin{equation*}
\frac d{dt}  \mathcal E_s( \mathbbm u(t))  \le     4 c_0 \epsilon       \ell_0^2  q_d(t)\mathcal E_s( \mathbbm u(t))
\end{equation*}
 for all $t \in [0,\tilde T]$.
Gr\"onwall's inequality implies
\begin{equation}\label{ysoln1}
 \mathcal E_s( \mathbbm u(t))   \le   \exp\left[ 4 c_0 \epsilon     \ell_0^2   \int_0^t q_d(\tau) \, d\tau\right] \mathcal E_s( \mathbbm u_0).
\end{equation}
Combining this with \eqref{EX-eq},  \eqref{qdest}--\eqref{sigmaT}, \eqref{Ttilde} and \eqref{Td},  we obtain 
\begin{align*}
  \norm{  \mathbbm u(t)}_{X^s_\mu}   & \le  \ell_0 \exp\left[2 c_0  \ell_0^2   \epsilon   \int_0^{t} q_d(\tau) \, d\tau\right]  \norm{  \mathbbm u_0}_{X^s_\mu} 
  \\
  & \le  \ell_0 \exp\left[ c_1     \left ( \ell_0^2 \epsilon \sigma_d(\mu, \beta, T) \|  \mathbbm u_0\|_{ X_\mu^{s }}+ \ell_0^4  \epsilon^2  \sigma^2_d(\mu, \beta, T)  \norm{  \mathbbm u_0}^2_{X^s_\mu}  \right)\right]  \norm{  \mathbbm u_0}_{X^s_\mu} 
  \\
  & \le 2 \ell_0  \norm{  \mathbbm u_0}_{X^s_\mu} 
\end{align*}
by choosing the constant $c$ in \eqref{Td} larger than  $ c_1/ \ln 2$.
Then by continuity, we deduce that there exists some $\tau \in  [\widetilde T, T^\ast)$ such that
      $\norm{  \mathbbm u(\tau)}_{X^s_\mu} \le 4  \ell_0  \norm{  \mathbbm u_0}_{X^s_\mu} $.   This contradicts the
definition of $\widetilde T$. Therefore,  $ T_d \le \widetilde T $, where $ T_d$ is as in \eqref{Td}. This concludes the proof of the Theorem.

    It remains to prove the claims \eqref{Apriori1}--\eqref{Apriori2}.
      To prove \eqref{Apriori2}, we use the fundamental theorem of calculus to write
\begin{align*}
1+\epsilon \eta(x,t) + \epsilon  \mathbf v(x,t) &= 1+ \epsilon  \eta_0(x) + \epsilon \mathbf v_0(x)+   \epsilon \int_0^t\left[ \partial_t \eta(x, \tau)\ +   \partial_t \mathbf v(x,  \tau) \right]d\tau .
\end{align*}
Thus, 
\begin{align}\label{pde}
p_d(t):= 1 + \epsilon \norm{ \mathbbm u(t)}_{L^\infty_x}& \le p_0 + \epsilon  \left(  \|  \eta_t\|_{ L^1_{T_d}L^\infty_x} +  \| \mathbf v_t\|_{  L^1_{T_d} L^\infty_x}  \right).
\end{align}

On the other hand, we can use \eqref{wtbsq} to estimate
\begin{align*}
\|\eta_t (t)\|_{L^\infty_x} + \|\mathbf v_t (t)\|_{L^\infty_x} &\le  \| \mathcal K_{\mu} \mathbf \nabla \mathbf v\|_{L^\infty_x}  +   \| \nabla \eta\|_{L^\infty_x}+ \epsilon \left( \| \eta\|_{L^\infty_x}+      \| \mathbf v\|_{L^\infty_x}    \right)   \left(   \| \nabla \eta\|_{L^\infty_x}  + \| \mathbf \nabla v\|_{L^\infty_x}   \right)
\\
&\le p_d(t) q_d(t)   , 
\end{align*}
and hence
\begin{equation}\label{etavt-bd}
  \epsilon  \left(  \|  \eta_t\|_{ L^1_{T_d}L^\infty_x} +  \| \mathbf v_t\|_{  L^1_{T_d} L^\infty_x}  \right)\le   \epsilon 
 \| p_d\|_{ L^\infty_{T_d}} \| q_d\|_{ L^1_{T_d}} . 
\end{equation}
Using \eqref{qdest}--\eqref{sigmaT}, \eqref{Ttilde} and  \eqref{Td}, we obtain
\begin{equation}\label{q-bd}
\begin{split}
    \epsilon  \| q_d\|_{ L^1_{T_d}}  
&\le  c_1 \left[  \epsilon \sigma_d(\mu, \beta, T)\|  \mathbbm u_0\|_{ X_\mu^{s }}+ \ell_0^2 \epsilon^2  \sigma^2_d(\mu, T)   \|  \mathbbm u_0 \|^2_{ L^\infty_T X_\mu^{s }  }\right] 
\\
&\le  
  (2\ell_0)^{-2}
 \end{split}
\end{equation}
by choosing the constant $c$ in \eqref{Td}  larger than $ 4c_1$.

Consequently,  combining \eqref{pde}--\eqref{q-bd}, we get
\begin{align*}
p_d(t) 
&\le  p_0 + 
\frac 12 \| p_d\|_{ L^\infty_{T_d}}  \qquad \Rightarrow \qquad \| p_d\|_{ L^\infty_{T_d}} \le 2p_0
\end{align*}
which proves \eqref{Apriori2}.

Finally,  we prove \eqref{Apriori1}.  By the fundamental theorem of calculus,
 \eqref{etavt-bd},  \eqref{q-bd} and \eqref{Apriori2},  we have
\begin{align*}
1+ \epsilon \eta(x,t) = 1+ \epsilon  \eta_0(x) + \epsilon \int_0^t \partial_t \eta(x,\tau)\,d\tau
&\ge  h_0 -  \epsilon   \|\eta_t\|_{ L^1_{  T_d}L^\infty_x}
\\
&\ge h_0 -  \epsilon \| p_d\|_{ L^\infty_{T_d}} \| q_d\|_{ L^1_{T_d}}  
\\
&\ge h_0 -  2  p_0   \cdot (2\ell_0)^{-2} 
\\
&=h_0 -\frac{ h_0}2=h_0.
\end{align*}
 This yields \eqref{Apriori1}. 

\end{proof}

\subsection{Proof of Theorem \ref{thm-wtbq}  }

\subsubsection*{Existence of solution and Continuity
of the flow map}
With the a priori estimate in Lemma  \ref{lm-ap-wtbsq} at hand, the complete proof of existence of solutions would
then follow from a standard compactness argument implemented on a regularized
version of the system.  The argument can for instance be found in \cite[Section 4.1]{PSST21}.  Continuity
of the flow map follows from an application of the Bona-Smith argument \cite{BS1975}. We omit the details and refer the interested reader to \cite{LPS2018}.

\subsubsection*{Uniqueness  of solution  }

Let $\mathbbm u_1:=(\eta_ 1 ,  \mathbf v_1)$ and $\mathbbm u_2:=(\eta_2,   \mathbf v_2)$ be two pair of solutions of the system
\eqref{wtbsq} in the class \eqref{U-solnclass} with initial data $\mathbbm u_ 1 (0)=\mathbbm u_{1, 0}$,  $\mathbbm u_2(0)=\mathbbm u_{2, 0}$.  Let
\[
\zeta := \eta_1 - \eta_2,\qquad \mathbf  w :=  \mathbf v_1 -  \mathbf v_2.
\]
Then $  \mathbbm z:= \mathbbm u_1- \mathbbm u_2=(\zeta ,  \mathbf  w)$ solves the system
\begin{align}\label{wtbsq-dif} 
    \left\lbrace
    \begin{array}{l}
            \zeta_t   +  \mathcal K_{\mu, \beta}(D) \nabla \cdot \mathbf w =- \epsilon \nabla \cdot (\zeta \mathbf v_1 + \eta_2 \mathbf  w)\\
        \mathbf  w_t +  \nabla \zeta =- \frac{\epsilon}{2} \nabla \left(  \mathbf  w \cdot  \mathbf v_1+  \mathbf v_2 \cdot  \mathbf  w \right) .
 \end{array}\right.
\end{align}
with initial data $V (0)=V_0:=U_{1, 0}-U_{2, 0}$.

Define
\begin{equation}\label{Edif}
\mathcal E_0( \mathbbm z (t))=\frac{1}{2}\int_{\R^d} \zeta^2+
| \sqrt{\mathcal K_{\mu, \beta}}  \mathbf  w|^2 + \epsilon  \eta_1 | \mathbf  w|^2\, dx.
\end{equation}

Let $s>d/4+5/2$ and 
\begin{align*}
  \sigma(t)&= \sum_{j=1}^2  \norm{ \mathbbm u_j(t)}_{X^s_\mu}  \quad \text{with }\quad   \sigma(0)=\sigma_0=\sum_{j=1}^2  \norm{ \mathbbm u_{j,0}}_{X^s_\mu}.
\end{align*}

We claim that the following estimate hold:
\begin{equation}\label{E0-der}
\frac{ d}{dt}  \mathcal E_0( \mathbbm z(t)) \lesssim     \epsilon  h_0^{-1}  (1+\sigma(t))^2  \mathcal E_0(\mathbbm u z(t)).
\end{equation}
Suppose for the moment that \eqref{E0-der} holds. By \eqref{U-solnbound}, we have
\begin{equation}\label{sigmabd}
      \sup_{0 \le t \le T_{d}}  \sigma (t)\lesssim     \sigma_0 ,
    \end{equation}
    where $ T_{d} = T_d( \epsilon,\mu, \beta) $ as in \eqref{btime}.
By Gr\"onwall's inequality, and \eqref{sigmabd}, we have for $0\le t \le T_d$,
\begin{equation}\label{Vesoln}
 \mathcal E_0( \mathbbm z(t))   \le  \exp\left[ c_0 \epsilon     h_0^{-1}  T_d  (1+ \sigma_0)^2 \right]\mathcal E_0(\mathbbm  z_0) 
\end{equation}

Form H\"{o}lder’s inequality, the non-cavitation condition and \eqref{sigmabd}, we have
 \begin{align}\label{ceov-E0}
 h_0 \|\mathbbm z(t)\|_{X_\mu}^2 \lesssim \mathcal E_0(\mathbbm z(t)) \lesssim (1+  \epsilon \sigma_0)\|\mathbbm z(t)\|_{X_\mu}^2,
\end{align}
where $X_\mu:=X_\mu^0$.
Combining this with \eqref{Vesoln}, we obtain
\begin{align*}
 \sup_{0 \le t \le T_{d}}   \norm{ \mathbbm z (t)}_{X_\mu}   & \lesssim  \sqrt{ \frac{ 1+  \epsilon \sigma_0}{h_0}} \exp \left[   c_0 \epsilon     h_0^{-1}  T_d    (1+ \sigma_0)^2 \right] \|\mathbbm z_0\|_{X_\mu} , 
\end{align*}
and hence the uniqueness result in Theorem \ref{thm-wtbq} follows by choosing $\mathbbm u_{1, 0}=\mathbbm u_{2, 0}=\mathbbm u_0$.

It remains to prove \eqref{E0-der}.
We give the proof only for $d=1$ (the proof for $d=2$ is similar). In view of \eqref{ceov-E0},  it suffices to show
\begin{equation}\label{E0-derv1}
\frac{ d}{dt}  \mathcal E_0( \mathbbm z(t)) \lesssim     \epsilon (1+\sigma(t))^2 \|\mathbbm z(t)\|_{X_\mu}^2.
\end{equation}

For brevity, we write $\mathcal K:=\mathcal K_{\mu, \beta}$.
Differentiating $\mathcal E_0(V(t))$ with respect to $t$,  using the system \eqref{wtbsq-dif} for $d=1$ and applying integration by parts, we obtain
\begin{align*}
\frac{ d}{dt} \mathcal E_0(\mathbbm z(t)) &=-\int_{\R} (  \zeta  \mathcal K w_x+ \sqrt  \mathcal K  \zeta_x  \sqrt   \mathcal K w ) \, dx+  \epsilon    \int_{\R}  \zeta_x (\zeta v_1 + \eta_2w) \, dx  + \frac \epsilon 2\int_{\R} \sqrt  \mathcal K w_x \sqrt  \mathcal K w (v_1 + v_2 ) \, dx 
\\ & - \frac \epsilon  2  \int_{\R}  \mathcal K  \partial_x v_2 w^2  \, dx- \frac{ \epsilon^2}  2  \int_{\R}   \partial_x( \eta_2 v_2) w^2  \, dx- \epsilon   \int_{\R} \eta_2  w \zeta_x   \, dx + \frac{ \epsilon^2}  2  \int_{\R}   \partial_x( \eta_1w) w(v_1 + v_2)  \, dx
\\
&:=\sum_{j=1}^{7} I_j(t).
\end{align*}
Integration by parts yields $I_1=0$,
\begin{align*}
I_2+I_6&= \epsilon    \int_{\R}  \zeta_x \zeta v_1 \, dx =  - \frac  \epsilon 2 \int_{\R}  \zeta^2  \partial_x v_1\, dx 
\\
I_7&= - \frac{ \epsilon^2}  2  \int_{\R}    \eta_1w w_x (v_1 + v_2)  \, dx- \frac{ \epsilon^2}  2  \int_{\R}    \eta_1w^2 \partial_x(v_1 + v_2)_x  \, dx
\\
&= \frac{ \epsilon^2}  4  \int_{\R}   w^2   \partial_x(\eta_1(v_1 + v_2))  \, dx- \frac{ \epsilon^2}  2  \int_{\R}    \eta_1w^2  \partial_x(v_1 + v_2)  \, dx
\end{align*}

 By H\"{o}lder's inequality and Sobolev embedding, 
\begin{align*}
|I_4|+|I_2+I_6|&\le     \frac \epsilon 2   \norm{\mathcal K \partial_x v_2}_{L^\infty} \norm  {u}^2_{L^2}+ \frac \epsilon 2   \norm{\partial_x v_1}_{L^\infty} \norm  {\zeta}^2_{L^2}
\\
&\lesssim \|v_2\|_{H^s} \norm  {u}^2_{L^2} + \|v_1\|_{H^s} \norm  {\zeta}^2_{L^2} 
\\
&\lesssim \epsilon \sigma(t) \|\mathbbm z(t)\|^2_{X_\mu}
\end{align*}
and 
\begin{align*}
 I_{31}+I_{32} &\le     \frac \epsilon 2   \norm{\mathcal K \partial_x (v_1- v_2)}_{L^\infty} \norm  {u}^2_{L^2}+ \frac \epsilon 2   \norm{\partial_x v_1}_{L^\infty} \norm  {\zeta}^2_{L^2}
\\
&\lesssim \|v_2\|_{H^s} \norm  {u}^2_{L^2} + \|v_1\|_{H^s} \norm  {\zeta}^2_{L^2} 
\\
&\lesssim \epsilon \sigma(t) \| \mathbbm z(t)\|^2_{X_\mu}
\end{align*}

Similarly, 
\begin{align*}
|I_2+I_7|& \le   \epsilon^2  \norm{\partial_x(\eta_2v_2)}_{L^\infty}  \norm  {u}^2_{L^2} +    \epsilon^2  \left[\norm{\partial_x(\eta_1(v_1 + v_2))}_{L^\infty} +[\norm{\eta_1  \partial_x(v_1 + v_2)}_{L^\infty} \right] \norm  {u}^2_{L^2}  \\
& \le  \epsilon^2  \sum_{j=0}^1 \left(  \norm{ \partial^{j}_x \eta_1 }_{L^\infty} + \norm{\partial^{j}_x  \eta_2 }_{L^\infty} \right)  \left(  \norm{\partial^{1-j}_x v_1 }_{L^\infty} + \norm{ \partial^{1-j}_x v_2 }_{L^\infty} \right) \norm  {u}^2_{L^2}  \\
&\lesssim \epsilon^2 \sigma^2(t) \|\mathbbm z(t)\|^2_{X_\mu}
\end{align*}

By integration by parts, and using $w=v_1-v_2$,  we write
\begin{align*}
I_3&=  \frac \epsilon 2\int_{\R}  \mathcal K w_x \cdot w(v_1 + v_2 )) \, dx =   \frac \epsilon 2 \int_{\R}  \mathcal K w_x \cdot w^2 \, dx  +  \frac \epsilon 2 \int_{\R}  \mathcal K w_x  \cdot w v_2 \, dx
\\
&=  \frac \epsilon 2 \int_{\R}  \mathcal K w_x \cdot w^2 \, dx  +  \frac \epsilon 2 \int_{\R}  \mathcal K w  \cdot w \partial_x v_2 \, dx +\frac \epsilon 2 \int_{\R}  \mathcal K w  \cdot w_x  v_2 \, dx
\\
&:=  I_{31}+I_{32}+I_{33}, 
\end{align*}
Then
\begin{align*}
| I_{31}|\le     \frac \epsilon 2   \norm{\mathcal K w_x }_{L^\infty} \norm  {u}^2_{L^2} &\lesssim \left(\|v_1\|_{H^s} + \|v_2\|_{H^s} \right) \norm  {u}^2_{L^2} \\
&\lesssim \epsilon \sigma(t) \|\mathbbm z(t)\|^2_{X_\mu}.
\end{align*}
The terms of $I_{32}$ an $I_{33}$ are estimated in \cite{P2022} (see proof of Proposition 4.1; specifically, see estimates for the terms $II_4$ and $A^{(2)}_4$ therein). These terms also satisfy the bound 
\begin{align*}
| I_{32}|+ | I_{33}|&\lesssim  \epsilon \sigma(t)\|\mathbbm z(t)\|^2_{X_\mu}.
\end{align*}

\section{Proof of Lemma \ref{lm-mderv-est} }\label{sec5}

The phase function is
$$m_\beta(r)=r \angles{  \sqrt{ \beta} r}   \tau (r), \quad \text{with} \ \ \tau (r)= \sqrt{\tanh(r)/r}.$$ 
Let 
$s(r):= \text{sech}(r).$
Clearly, 
$$ \tau(r)\sim  \angles{r}^{-1/2}, \qquad  s(r)
\sim e^{-r}.$$

\subsubsection{
Proof of  \eqref{m-1stderv-est}} 
Since 
\begin{align*}
\tau'(r)&= \frac 1{2 r } \left(  \tau^{-1}s^2-\tau\right)
\end{align*}
we have
\begin{align*}
m_\beta'(r) &= \langle\sqrt{\beta}\,r\rangle(\tau+r\tau') + \beta r\langle\sqrt{\beta}\,r\rangle^{-1}(r\tau) \\
&= \frac{1}{2}\langle\sqrt{\beta}\,r\rangle\left(\tau+\tau^{-1}s^2\right) + \beta r^2\langle\sqrt{\beta}\,r\rangle^{-1}\tau \\
&\sim \langle\sqrt{\beta}\,r\rangle\langle r\rangle^{-1/2}.
\end{align*}

\subsection{
Proof of  \eqref{m-2ndderv-est}} 

Letting
\begin{align*}
\alpha_\beta(r)&:=\angles{ \sqrt{\beta} r}^{-2}   \left[ 1+  \tau^{-2}s^2+ \angles{\sqrt{\beta}  r}^{-2} \right], \qquad \gamma(r):=\frac{e^{2r}- e^{-2r}-4r} {4r^2},
\end{align*}
we can write (see \cite {PSST21}, \cite {DST20} for the detail),
\begin{align*}
m''_\beta(r)= 4^{-1}r \angles{\sqrt{\beta}  r} \tau(r)  h(r)  f_\beta(r),
\end{align*}
where
\begin{align*}
 \sigma(r) &:= 4  s^2 +  \tau^{-4} \gamma^2 s^4,
  \qquad
   f_\beta(r)
= 4 \beta \frac{\alpha_\beta(r)}{\sigma(r) }-1.
\end{align*}
Now, since
\begin{align}\label{ABE-est}
\gamma(r)\sim 
r\angles{r}^{-3}e^{2r} , \qquad  \alpha_\beta(r) 
\sim \angles{ \sqrt{\beta} r}^{-2} ,  \qquad  
  \sigma(r) 
\sim  \angles{r}^{-2} 
\end{align}
we have
\begin{align*}
|m_\beta''(r)| \sim r \angles{\sqrt{\beta}  r}  \angles{r}^{-\frac 52}  | f_\beta(r)|.
\end{align*}

Therefore, \eqref{m-2ndderv-est} reduces to establishing the lower and upper bound estimates
\begin{equation}
\label{fbest}
c_1 |3\beta-1| \le  | f_\beta(r)| \le c_2 \quad \forall \  r\ge 0,
\end{equation}
for some absolute positive constants $c_1$ and $c_2$,  whenever $\beta=0$ or  $\beta >1/3$.
Clearly, $| f_0(r)| =1$  for all $r\ge 0$, and it is also easy to see that the upper bound estimate in \eqref{fbest} holds for all $\beta\ge0$.
The lower bound estimate in 
\eqref{fbest} is given in the following Lemma.

\begin{lemma}[Positivity threshold for $f_\beta$]
\label{lm:fbeta-threshold}
Let $\gamma, \sigma, \alpha_\beta, f_\beta$ be as defined above. Then we have the following:

\begin{enumerate}[(i)]
\item \label{fbeta-a}  For every $\beta>1/3 $, we have the lower bound estimate
\begin{equation}\label{F-lbd}
f_\beta(r)\ \ge  \min(  3\beta-1, 1/2)  \qquad \forall \ r\ge0.
\end{equation}

\item \label{fbeta-b}  If $0<\beta\le 1/3$, then   
\begin{equation}\label{F-nolbd}
\inf_{r\ge0}|f_\beta(r)|=0
\end{equation}
and
\begin{equation}\label{F13}
    f_{1/3}(r)\ \ge\ 0 \qquad \forall \ r\ge 0.
\end{equation}

\end{enumerate}

\end{lemma}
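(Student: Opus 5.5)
The plan is to reduce everything to an explicit one-variable inequality. Set $b:=\beta r^2$, so $\angles{\sqrt\beta r}^2=1+b$, and $c(r):=\tau^{-2}s^2=r\,\mathrm{sech}^2 r/\tanh r$. Then
\[
4\beta\alpha_\beta(r)=\frac{4\beta\,\bigl(2+b+(1+b)c(r)\bigr)}{(1+b)^2},\qquad f_\beta(r)=\frac{4\beta\bigl(2+b+(1+b)c\bigr)}{(1+b)^2\sigma(r)}-1 .
\]
The first step is to record the boundary behaviour, using the expansions behind \eqref{ABE-est}.
\begin{itemize}
\item As $r\to0$ one has $\tau,s\to1$ and $\gamma(r)=\tfrac23 r+O(r^3)\to0$, hence $\sigma(0)=4$, $\alpha_\beta(0)=3$ and $f_\beta(0)=3\beta-1$.
\item As $r\to\infty$, $4s^2$ decays exponentially while $\tau^{-4}\gamma^2s^4\sim r^2\cdot(e^{2r}/4r^2)^2\cdot16e^{-4r}=r^{-2}(1+o(1))$. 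Also $4\beta\alpha_\beta\sim 4/r^2$, so $f_\beta(r)\to3$.
\end{itemize}
All quantities are continuous on $[0,\infty)$ for each $\beta\ge0$, and $f_\beta(r)$ is continuous jointly in $(\beta,r)$.

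Part (ii) then follows quickly.
\begin{itemize}
\item For $0<\beta<1/3$ we have $f_\beta(0)=3\beta-1<0$ and $f_\beta(r)\to3>0$. By the intermediate value theorem $f_\beta$ vanishes at some $r_0>0$, so $\inf_{r\ge0}|f_\beta|=0$.
\item For $\beta=1/3$, $f_{1/3}(0)=0$ gives \eqref{F-nolbd} directly.
\item \eqref{F13} is the statement $\tfrac43\bigl(2+b+(1+b)c\bigr)\ge(1+b)^2\sigma$ with $b=r^2/3$. After multiplying by $\sinh^2$-type factors this becomes an inequality between exponential polynomials in $r$. I would prove it by expanding both sides as power series in $r$ (or in $e^{-2r}$ for large $r$) and checking coefficientwise nonnegativity of the difference. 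Near $r=0$ I would verify that the difference vanishes to order $r^4$ with positive leading coefficient; this is the KdV degeneracy, since the $r^2$ terms cancel exactly at $\beta=1/3$.
\end{itemize}

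For part (i) I would compare with $\beta=1/3$. For fixed $r$, write
\[
f_\beta(r)-f_{1/3}(r)=\frac{4}{\sigma}\bigl(\beta\alpha_\beta-\tfrac13\alpha_{1/3}\bigr).
\]
The aim is to show
\[
\beta\alpha_\beta(r)-\tfrac13\alpha_{1/3}(r)\ \ge\ \kappa\,\min(3\beta-1,1/2)\,\sigma(r)
\]
with $\kappa\ge1/4$. Two regimes would be treated separately.
\begin{itemize}
\item \emph{Small $r$} (say $r\lesssim1$, or more precisely $\beta r^2\lesssim1$). Here $\beta\alpha_\beta\approx3\beta-\beta(\ldots)r^2$ and $\sigma\approx4$. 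So $f_\beta\ge(3\beta-1)(1-Cr^2)$ plus the nonnegative contribution coming from \eqref{F13}, which handles $f_\beta\gtrsim3\beta-1$ there.
\item \emph{Large $\beta r^2$}. Here $4\beta\alpha_\beta/\sigma\approx4\beta/((1+b)\sigma)\gtrsim 4b/((1+b)r^2\sigma)$. Since $r^2\sigma\to1$ from below, this pushes $f_\beta$ toward $3$, and $f_\beta\ge1/2$ follows once $b\ge b_0$.
\end{itemize}
It is convenient to note that $\beta\mapsto\beta\alpha_\beta(r)$, equal to $\beta(2+b+(1+b)c)/(1+b)^2$, has derivative with sign controlled by $b\le$ const. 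So on $\{b\le b_0\}$ the function $f_\beta$ is increasing in $\beta$ at a rate $\gtrsim 1/\sigma\gtrsim1$. Integrating from $1/3$ to $\beta$ and using \eqref{F13} then gives $f_\beta\ge c(3\beta-1)$ there, capped at $1/2$ for large $\beta$.

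The main obstacle is the sharp inequality \eqref{F13}, i.e. $f_{1/3}\ge0$ for all $r$, together with making the constants explicit enough to land on exactly $\min(3\beta-1,1/2)$. I would attack \eqref{F13} first by substituting $x=e^{-2r}$, clearing denominators so both sides become polynomials in $r$ and $x$, and splitting into $r\le1$ (Taylor expansion with explicit remainder) and $r\ge1$ (where the exponential terms are negligible and the inequality reduces to a polynomial inequality in $r$).
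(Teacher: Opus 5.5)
Your part (ii) matches the paper (intermediate value theorem for $\beta<1/3$, and $f_{1/3}(0)=0$). For \eqref{F13} the paper itself only offers a plot of $d(r)$, so an expansion-based proof would be an improvement. One slip: in your normalization $\frac43(2+b+(1+b)c)-(1+b)^2\sigma$ it is the constant terms that cancel at $\beta=1/3$, and the difference is $\frac89r^2+O(r^4)$, not $O(r^4)$.

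The genuine gap is in part (i). Your target $\beta\alpha_\beta-\frac13\alpha_{1/3}\ge\frac14\min(3\beta-1,\frac12)\,\sigma$, i.e.\ $f_\beta-f_{1/3}\ge\min(3\beta-1,\frac12)$, is false. From $f_\beta(r)=(3\beta-1)+2\beta(1-2\beta)r^2+O(r^4)$ one gets
\[
f_\beta(r)-f_{1/3}(r)=(3\beta-1)\Bigl[1-\tfrac29(6\beta-1)r^2\Bigr]+O(r^4).
\]
This drops below $3\beta-1$ for small $r>0$ whenever $\beta>1/3$, which contradicts the target for $1/3<\beta\le\frac12$. For $\beta$ slightly above $\frac12$ the target fails at moderate $r$. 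Numerically, $\beta=0.4$, $r=0.5$ gives $f_\beta-f_{1/3}\approx0.186<0.2$, and $\beta=0.55$, $r=1$ gives $\approx0.434<\frac12$.

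Replacing $f_{1/3}$ by its lower bound $0$ discards exactly the term $f_{1/3}(r)=\frac29r^2+O(r^4)$ that keeps $f_\beta$ above $3\beta-1$. So no splitting into regimes can recover the sharp constant along this route. Your fallback (integrating $\partial_\beta f_\beta$ over $\{b\le b_0\}$ and using $f_\beta\ge\frac12$ for $b\ge b_0$) gives at best $f_\beta\ge\kappa_0\min(3\beta-1,\frac12)$ for some $\kappa_0<1$. That is enough for \eqref{m-2ndderv-est} up to constants, but it is not the lemma as stated.

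The missing ingredient is monotonicity in $\beta$ for \emph{every} $r$, which is the paper's key step ($g$ decreasing in $y$, $y$ decreasing in $\beta$). In your variables, $\partial_\beta(\beta\alpha_\beta)=\bigl(2+c(r)(1+b)\bigr)/(1+b)^3>0$ with no restriction on $b$, so the sign is never an issue. The paper then handles $\beta>1/2$ via $f_\beta>f_{1/2}\ge\frac12$. For $1/3<\beta\le\frac12$ it asserts from the Taylor expansion that $r=0$ minimizes $f_\beta$, which by itself is only a local statement.

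A clean way to make this global is the following. Since $f_\beta+1=4\beta\alpha_\beta/\sigma$, one has $f_\beta\ge3\beta-1\iff4\alpha_\beta\ge3\sigma$. Moreover $\alpha_\beta=y(1+c+y)$ with $y=(1+b)^{-1}$ is decreasing in $\beta$. Hence (i), and \eqref{F13} as well, all follow from the single inequality $4\alpha_{1/2}(r)\ge3\sigma(r)$ on $[0,\infty)$, i.e.\ $f_{1/2}\ge\frac12$. Both sides equal $12-\frac{32}{3}r^2$ through order $r^2$ and differ by $\frac{37}{45}r^4+O(r^6)$. At infinity they behave like $8r^{-2}$ versus $3r^{-2}$. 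This one-variable inequality is where your small-$r$/large-$r$ analysis should be aimed.
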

We give the proof of Lemma \ref{lm:fbeta-threshold} in the following subsection.

\subsection {Proof of Lemma \ref{lm:fbeta-threshold} }
We need the following properties for $f_\beta(r)$:
\begin{equation}\label{fbeta-limit}
            \lim_{r \to 0^+} f_\beta(r) =  3\beta - 1 \quad  \& \quad  \lim_{r \to \infty} f_\beta(r)   = 3 \qquad \forall \ \beta \ge 0.
\end{equation}
The first limit follows from the fact that 
  $\alpha_\beta(r) \to 3$ and  $\sigma(r) \to 4$  as $r \to 0^+$. The second limit follows from \eqref{ABE-est}. Indeed, 
        \[
        \lim_{r \to \infty} f_\beta(r) =  \lim_{r \to \infty}  4\beta \frac{\alpha_\beta(r)}{\sigma(r) }-1=\lim_{r \to \infty}  \frac{4\beta (\sqrt \beta r)^{-2}}{r^{-2} } - 1  = 3.
        \]

Let us rewrite $ f_\beta(r)$ by setting
$$ 
x(r):= \tau^{-2}s^2 =\frac{2r}{\sinh 2r} \in(0,1], \qquad y(\beta,r):=\langle\sqrt\beta\,r\rangle^{-2} =\frac 1{1+ \beta r^2 } \in(0,1].
$$
Since $\sinh 2r - 2r = 2r^2 \gamma(r)$, we have $1-x= rx \gamma(r). $ So we can write 
\begin{align*}
    \sigma(r)= \frac {4r^2s^2 +(1-x)^2}{r^2}, \qquad  \beta=\frac{1-y}{r^2 y},
\end{align*}
and hence
$$
f_\beta(r)=\frac{g(y( \beta, r))}{h(r)}-1,
$$
where $g(y)= 4(1-y)(1+x+y)$ and  $h(r)= 4r^2s^2+(1-x)^2.$

\subsubsection{ Proof of \eqref{fbeta-b}: Case $0<\beta<1/3$}
By \eqref{fbeta-limit}, 
$f_\beta(0)=3\beta-1<0$ while $f_\beta(r)\to3>0$ as $r\to\infty$.  Since $f_\beta$ is continuous on $[0,\infty)$, by the Intermediate Value Theorem there exists $r_0 \in(0,\infty)$ such that $f_\beta(r_0)=0$, so $\inf_{r\ge0}|f_\beta(r)|=0$, which is \eqref{F-nolbd}.

\subsubsection{Proof of \eqref{fbeta-b}: Case $\beta=1/3$}

The statement $f_{1/3}(r)\ \ge\ 0$ is equivalent to $g(y( 1/3, r))\ge h(r)$ for all $ r\ge 0$, and therefore we need to show 
\begin{equation}\label{G13}
   d(r):= g(y( 1/3, r))- h(r)\ \ge\ 0 \qquad \forall \ r\ge 0.
\end{equation}
At $\beta=1/3$, $y=\dfrac{3}{3+r^2}$, and hence 
$$d(r)= \frac {4r^2(1+x)  }{3+r^2} + \frac {12r^2  }{ (3+r^2)^2} -4r^2s^2-(1-x)^2.$$
Since $\lim_{r\rightarrow 0 } x(r)= 1$,  $\lim_{r\rightarrow \infty } x(r)= 0$ and $\lim_{r\rightarrow \infty } s(r)=0$,  we have
$$ \lim_{r\rightarrow 0 } d(r)=0 \quad \text{and} \quad \lim_{r\rightarrow \infty } d(r)=3. $$
Moreover,  $d(r)> 0$ for all $r>0$  (see Fig. 1 below).
Therefore, $f_{1/3}(r)\ \ge\ 0$ for all $ r\ge 0$.

\begin{figure}[htbp]
    \centering
    \includegraphics[width=\textwidth]{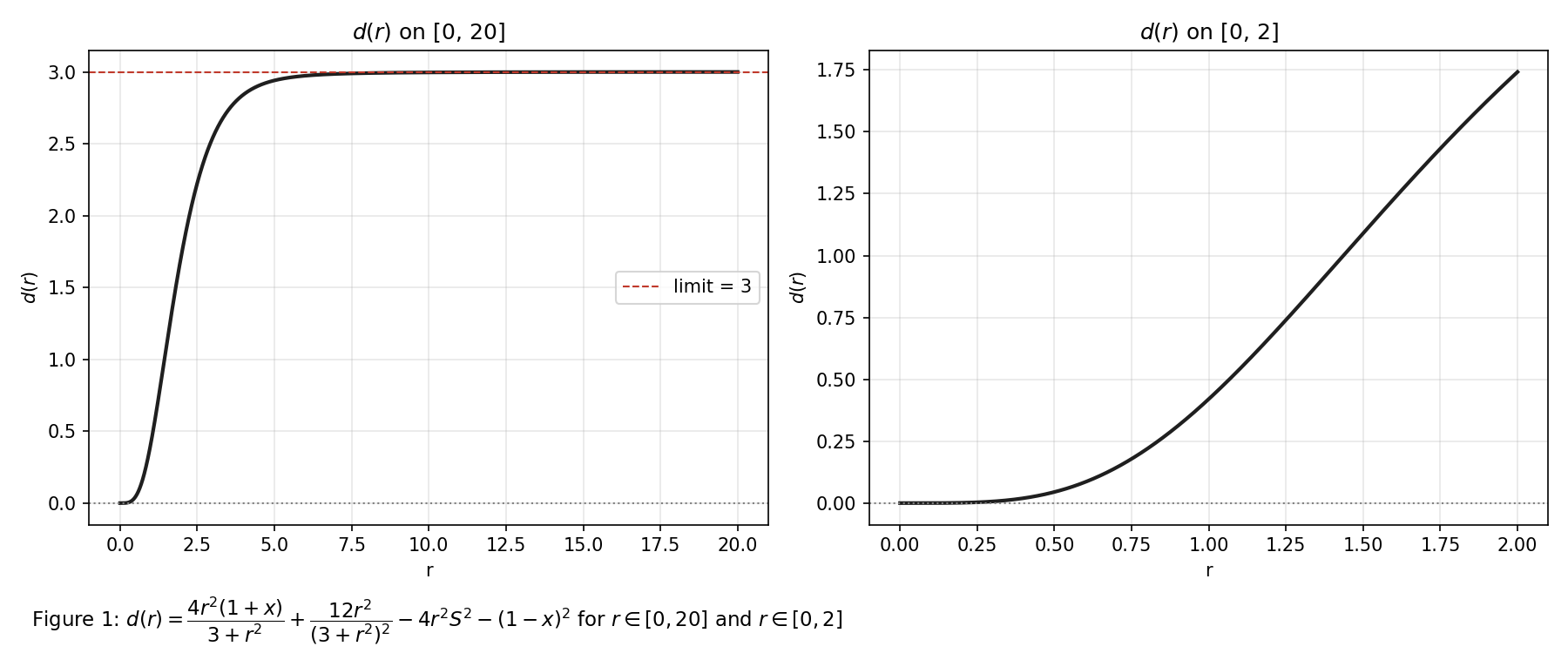}
    \label{fig:G_r}
\end{figure}

\subsubsection{Proof of \eqref{fbeta-a}: Case $\beta>1/3$} 

We write  $g(y)=4(1+x)-4x y-4y^2$.
Since $x(r)>0$ for all $r\ge0$, $g'(y)=-4x-8y<0$ for $y\ge0$, so $g$ is strictly decreasing in $y$. Since $y(\beta,r)$ is strictly decreasing in $\beta$, the composite $\beta \mapsto g(y(\beta,r))$ is strictly increasing for every fixed $r$; in particular, at $r=0$, we have $f_\beta(0)=3\beta-1$ which is increasing in $\beta$. 
Therefore, $f_\beta(r)$ increases strictly in $\beta$. This property can be combined with \eqref{F13} to deduce
$$ f_\beta(r)>f_{1/3}(r) \ge 0 \qquad \forall \ r\ge 0 , \quad \beta>1/3. $$
Since $f_\beta(r)$ is continuous on $[0, \infty)$, the Extreme Value Theorem implies that $c_\beta=\min_{r\ge0}f_\beta(r)$ is attained. Moreover, $c_\beta>0$, since $f_\beta$ never vanishes on $[0,\infty)$ for $\beta>1/3$.

To find $c_\beta$, we write the
Taylor expansions of $f_\beta(r)$.
Expanding $x(r)$ and $y(r)$, we have 
$$ x(r) = 1 - \tfrac{2}{3} r^2 + \tfrac{14}{45} r^4 + O(r^6), \qquad  y(r) = 1 - \beta r^2 + \beta^2 r^4 + O(r^6), $$
which imply
\begin{align*}
g(y) &= 4(1-y)(1+x+y)
=12\beta\, r^2 - \left(\frac{8\beta}{3} + 16\beta^2\right) r^4 + O(r^6),
\\ 
h(r) &= 4r^2s^2+(1-x)^2=4r^2 - \frac{32}{9} r^4 + O(r^6).
\end{align*}
Combining these expansions, we have
\[
\frac{g(y)}{h(r)}
= \frac{12\beta - \left(\tfrac{8\beta}{3}+16\beta^2\right) r^2 + O(r^4)}
{4 - \tfrac{32}{9} r^2 + O(r^4)}
= 3\beta + a r^2 + O(r^4),
\]
for some  $a=a(\beta)$. 
Matching coefficients gives $a = 2\beta - 4\beta^2$, and therefore 
\begin{equation*}
f_\beta(r)=\frac{g(y( \beta, r))}{h(r)}-1= (3\beta - 1) + 2\beta(1-2\beta)\, r^2 + O(r^4).
\end{equation*}
This shows that $r=0$ is a minimizer exactly while $1/3<\beta\le1/2$, in which case we have
$$
c_\beta=\min_{r\ge0}f_\beta(r)= f_\beta(0)=3\beta - 1.
$$
On the othere hand, 
for $\beta>1/2$, we use the strict monotonicity property of $f_\beta(r)$ to conclude that
$$
f_\beta(r) > f_{1/2}(r)\ge \frac 12 \qquad \forall \ r\ge 0,
$$
where we also used $f_{1/2}(r)\ge 1/2$ for all $r\ge 0$.
 Combining these two cases yield 
 $$ 
 f_\beta(r)\ \ge  \min(  3\beta-1, 1/2)  \qquad \forall \ r\ge0 \ \ \& \ \ \beta>1/3.
 $$

\vspace{8mm}

\subsection*{Acknowledgments}
 The authors acknowledge support from the Science Committee of the Ministry of Science and Higher Education of Kazakhstan (Grant No.  AP26194665).  The authors also acknowledge support from the Nazarbayev University Faculty Development Competitive
Research Grants Program 2025--2027 (Ref. 040225FD4730). The second author is grateful to Sigmund Selberg,  Didier Pilod and Nadia Taki
for fruitful discussions during the joint work in \cite{PSTT25} that motivated the present paper.

\end{document}